%
%
%
%
%
%
\RequirePackage{fix-cm}
\documentclass[smallextended]{svjour3}       
\smartqed  
%
%
%
%
%
%

\usepackage{
amsmath,
amssymb,
hyperref,
mathrsfs,
graphicx,
stmaryrd,
amscd,
 amsfonts, stmaryrd,latexsym, xargs,lineno}
\usepackage{color}

\usepackage[colorinlistoftodos]{todonotes}
 \presetkeys{todonotes}%
{inline,backgroundcolor=gray!20,bordercolor=gray!30}{}
\tikzset{/tikz/notestyleraw/.append style={text=black}}


\newtheorem{thm}{Theorem}[section]

\newtheorem{lem}[thm]{Lemma}
\newtheorem{defn}[thm]{Definition}
\newtheorem{prop}[thm]{Proposition}

\newtheorem{rmk}[thm]{Remark}
\newcommand{\be}{\begin{eqnarray}}
\newcommand{\ee}{\end{eqnarray}}
\newcommand{\beq}{\begin{equation}}
\newcommand{\eeq}{\end{equation}}
\newcommand{\ben}{\begin{eqnarray*}}
\newcommand{\een}{\end{eqnarray*}}
\newcommand{\beal}{\begin{aligned}}
\newcommand{\enal}{\end{aligned}}

\newcommand{\eps}{\varepsilon}

\newcommand{\lb}{\lambda}

\newcommand{\T}{\mathbb{T}}
\newcommand{\R}{\mathbb{R}}
\newcommand{\Q}{\mathbb{Q}}

\newcommand{\Z}{\mathbb{Z}}

\newcommand{\om}{\omega}
\newcommand{\Om}{\Omega}

\newcommand{\dt}{\delta}

\newcommand{\cM}{\mathcal{M}}

\newcommand{\cA}{\mathcal{A}}
\newcommand{\cL}{\mathcal{L}}

\newcommand{\cU}{\mathcal{U}}
\newcommand{\cH}{\mathcal{H}}

\newcommand{\cF}{\mathcal{F}}

\newcommand{\wh}{\widehat }
\newcommand{\wt}{\widetilde }
\newcommand{\cI}{\mathcal{I}}

\begin{document}

\title{Parameterized viscosity solutions of convex Hamiltonian systems with time periodic damping}

\titlerunning{viscosity solutions of Hamiltonian systems with time periodic damping}        

\author{
Ya-Nan Wang  
\and Jun Yan     
\and Jianlu Zhang
}

\authorrunning{Y-N Wang, J Yan, J Zhang} 

\institute{Ya-Nan Wang\at
School of Mathematical Sciences, Nanjing Normal University, Nanjing, 210097, China\\
\email{yananwang@njnu.edu.cn}
\and
Jun Yan \at
School of Mathematical Sciences, Fudan University \& Shanghai Key Laboratory for Contemporary Applied Mathematics, Shanghai 200433, China\\
\email{yanjun@fudan.edu.cn}
\and 
Jianlu Zhang \at
              Hua Loo-Keng Key Laboratory of Mathematics \& Mathematics Institute, Academy of Mathematics and systems science, Chinese Academy of Sciences, Beijing 100190, China \\
              \email{jellychung1987@gmail.com}           
}

\date{Received: date / Accepted: date}

\maketitle

\begin{abstract}
In this article we develop an analogue of Aubry Mather theory for time periodic dissipative equation
\[
\left\{
\begin{aligned}
\dot x&=\partial_p H(x,p,t),\\
\dot p&=-\partial_x H(x,p,t)-f(t)p
\end{aligned}
\right.
\]
with $(x,p,t)\in T^*M\times\T$ (compact manifold $M$ without boundary). We discuss the asymptotic behaviors of viscosity solutions of associated Hamilton-Jacobi equation
\[
\partial_t u+f(t)u+H(x,\partial_x u,t)=0,\quad(x,t)\in M\times\T
\]
w.r.t. certain parameters, and analyze the meanings in controlling the global dynamics. We also discuss the prospect of applying our conclusions to many physical models.

\keywords{viscosity solution, weak KAM solution, convex Hamiltonian, Aubry Mather theory, global attractor, rotation number}
 \subclass{37J50, 37J55, 35B40,49L25}
\end{abstract}



\section{Introduction}\label{s1}

 For a smooth compact Riemannian manifold $M$ without boundary, the Hamiltonian $H$ is usually characterized as a $C^{r\geq 2}-$smooth function on the cotangent bundle $T^*M$, with the associated Hamilton equation defined by 
 \be\label{eq:ham}
  {\sf (Conservative)\quad }\left\{
\begin{aligned}
\dot x&=\partial_p H(x,p)\\
\dot p&=-\partial_x H(x,p)
\end{aligned}
\right.
\ee
for each initial point $(x,p)\in T^*M$. From the physical aspect, the Hamiltonian equation describes the movement of particles with conservative energy, since the Hamiltonian $H(x,p)$ verifies to be a {\sf First Integral} of (\ref{eq:ham}). In particular, if
the potential periodically depends on the time $t$ (for systems with periodic propulsion or procession),  we can introduce an augmented Hamiltonian 
\be
\wt H(x,p,t,I)=I+H(x,p,t),\quad\quad(x,p,t,I)\in T^*M\times T^*\T
\ee
such that the associated Hamiltonian equation
 \be\label{eq:ham-aug}
 {\sf (Conservative)\quad }\left\{
\begin{aligned}
\dot x&=\partial_p H(x,p,t)\\
\dot p&=-\partial_x H(x,p,t)\\
\dot t&=1\\
\dot I &=-\partial_t H(x,p,t)
\end{aligned}
\right.
\ee still preserves $\wt H$. \medskip

However, the realistic motion of the masses inevitably sustains a dissipation of energy, due to the friction from the environment, e.g. the wind, the fluid, interface etc. That urges us to make rational modification of previous equations. 
In the current paper, the damping is assumed to be time-periodically proportional to the momentum. Precisely,  we modify \eqref{eq:ham-aug} into
 \be\label{eq:dis}
 {\sf (Dissipative)\quad }\left\{
\begin{aligned}
\dot x&=\partial_p H(x,p,t)\\
\dot p&=-\partial_x H(x,p,t)-f(t)p\\
\dot t&=1\\
\dot I &=-\partial_t H(x,p,t)-f'(t)u-f(t)I\\
\dot u&=\langle H_p,p\rangle-H+\alpha-f(t)u
\end{aligned}
\right.
\ee
with $\alpha\in\R$ being a constant of initial energy and $f\in C^{r\geq 2}(\T:=\R\slash[0,1], \R)$.  Notice that the former three equations of (\ref{eq:ode1}) is decoupled with the latter two, so we can denote the flow of the former three equations in \eqref{eq:dis} by $\varphi_H^t$ and by $\wh\varphi_{ H}^t$ the flow of the whole \eqref{eq:dis}. The following individual cases of $f(t)$ will be considered:
\begin{itemize}
\item {\bf (H0$^-$)}  $[f]:=\int_0^1f(t)dt>0$
\item  {\bf (H0$^+$)} $[f]<0$
\item {\bf (H0$^0$)} $[f]=0$ 
\end{itemize}

Besides, we 
%
%
propose the following {\sf standing assumptions} for the Hamiltonian:
\begin{itemize}
\item {\bf (H1)} {\sf [Smoothness]} $H:TM\times\T\rightarrow\R$ is $C^{r\geq 2}$ smooth;
\item {\bf (H2)} {\sf [Convexity]} For any $(x,t)\in M\times\T$, $H(x,\cdot,t)$ is strictly convex on $ T_x^*M$;
\item {\bf (H3)} {\sf [Superlinearity]} For any $(x,t)\in M\times\T$, $\lim_{|p|_x\rightarrow +\infty}H(x,p,t)/|p|_x=+\infty$ where $|\cdot|_x$ is the norm deduced from the Riemannian metric.
\item {\bf (H4)} {\sf [Completeness]} For any $(x,p,\theta)\in T^*M\times\T$, the flow $\varphi_H^t(x,p,\theta)$ exists for all $t\in\R$.
\end{itemize} 
\begin{rmk}\label{rmk:pro}
\begin{itemize}
\item[i)] As we can see, the three different cases of {\bf (H0)} respectively leads to a {\sf dissipation, acceleration and periodic conservation} of energy  along $\wh\varphi_H^t$ in the forward time, if we take
\be\label{eq:ham-main}
\wh H(x,p,t,I, u)=\wt H(x,p,t,I)+f(t)u-\alpha.
\ee
This is because $\dfrac d{dt}\wh H=-f(t)\wh H$.
\item {\bf (H1-H3)} are usually called {\sf Tonelli conditions}. As for {\bf (H4)}, the completeness of $\varphi_H^t$ is actually equivalent to the completeness of $\wh\varphi_H^t$. A sufficient condition to {\bf (H4)} is the following:
\[
|H_x|\leq \kappa (1+|p|_x) \text{ for all }(x,p,t)\in T^*M\times\T
\]
for some  constant $\kappa$.
\item[ii)] Observe that the time-1 map $\varphi_H^1:\{(x,p,t=0)\}\rightarrow\{(x,p,t=0)\}$ is  {\sf conformally symplectic}, i.e.
 \[
 (\varphi_H^1)^*dp\wedge dx=e^{[f]}dp\wedge dx.
 \]
Such maps have wide applications in astronomy \cite{CC}, optimal transport \cite{WL}, biological physics \cite{Ca2} and economics \cite{B} etc (see Sec. \ref{sp} for more details).
\end{itemize}
\end{rmk}
In the following, we will explore the dynamics of \eqref{eq:dis} by using an analogue of Aubry Mather theory \cite{Mat} or weak KAM theory \cite{Fa}. Similar research was exploited in \cite{CCJW,WWY2,WWY3}, for generalized $1^{st}$ order PDEs. The current paper have a lot of similarities in the methodology with these works.


\subsection{Variational Principle and Hamilton Jacobi equation} As the dual of the Hamiltonian, the {\sf Lagrangian} can be defined as the  following 
\be\label{eq:led}
L(x,v,t):=\max_{p\in T^*_xM} \langle p,v\rangle-H(x,p,t),\quad (x,v,t)\in TM\times\T.
\ee
of which the maximum is achieved for $v=H_p(x,p,t)\in T_xM$, once {\bf (H1-H3)} are assumed. Therefore, the {\sf Legendre transformation}
\beq
\cL: T^*M\times\T\rightarrow TM\times\T, \quad\text{via } (x,p,t)\rightarrow (x, H_p(x,p,t),t)
\eeq
is a diffeomorphism. Notice that the Lagrangian $L:TM\times\T\rightarrow\R$ is also $C^r-$smooth and convex, superlinear in $p\in T_x M$, so by a slight abusion of notions we say it satisfies {\bf (H1-H3)} as well. 
As a conjugation of $\varphi^t_H$, the {\sf Euler-Lagrangian flow} $\varphi_L^t$ is defined by 
\beq\label{eq:e-l}\tag{E-L}
\left\{
\begin{aligned}
&\dot x=v,\\
&\frac d{dt}L_v(x,v,t)=L_x(x,v,t)-f(t)L_v(x,v,t),\\
&\dot t=1.
\end{aligned}
\right.
\eeq
 It has equivalently effective in exploring the dynamics of  (\ref{eq:dis}), and the completeness of $\varphi_L^t$ is equivalent to the completeness of $\varphi_L^t$. 
To present \eqref{eq:e-l} a variational characterization, we introduce a minimal variation on the absolutely continuous curves with fixed endpoints
 \[
 h_\alpha^{s,t}(x,y)=\inf_{\substack{\gamma\in C^{ac}([s,t],M) \\ \gamma(s)=x,\gamma(t)=y}}\int^t_se^{F(\tau)}(L(\gamma,\dot{\gamma},\tau)+\alpha)\mbox{d}\tau,
 \]
where $F(t)=\int^t_0f(\tau)\mbox{d}\tau$ and $\alpha\in\mathbb{R}$. 
 It is a classical result in the calculus of variations, the infimum is always available for all  $s<t\in\R$, which is actually $C^r-$ smooth and satisfies (\ref{eq:e-l}), once {\bf (H4)} is assumed (due to the {\sf Weierstrass Theorem} in \cite{Mat} or Theorem 3.7.1 of \cite{Fa}).
\begin{thm}[main 1]\label{thm:1} For  $f(t)$ satisfying {\bf  (H0$^-$)}, $H(x,p,t)$ satisfying {\bf (H1-H4)} and any $\alpha\in\R$, the following 
\[
u_\alpha^-(x,t):=\inf_{\substack{\gamma\in C^{ac}((-\infty,t],M)\\\gamma(t)=x}}\int^t_{-\infty}e^{F(s)-F(t)}(L(\gamma(s),\dot{\gamma}(s),s)+\alpha)\mbox{d}s
\]
is well defined for $(x,t)\in M\times\R$ and satisfies 
 \begin{enumerate}
 \item {\sf (Periodicity)} $u_\alpha^-(x,t+1)=u_\alpha^-(x,t)$ for any $x\in M$ and $t\in\R$. By taking $\bar t\in[0,1)$ with $t\equiv \bar t\  (mod\ 1)$ for any $t\in\R$, we can interpreted $u_\alpha^-$ as a function on $M\times\T$.
 \item {\sf (Lipschitzness)} $u_\alpha^-:M\times\T\rightarrow \R$ is Lipschitz, with the Lipschitz constant depending on $L$ and $f$;
\item {\sf (Domination\footnote{Any function $\om\in C(M\times\T,\R)$ satisfying (\ref{eq:dom}) is called a {\sf (viscosity) subsolution} of (\ref{eq:sta-hj}) and denoted by $\om\prec_f L+\alpha$. 
})} For any absolutely continuous curve $\gamma:[s,t]\rightarrow M$ connecting $(x,\bar s)\in M\times \T$ and $(y,\bar t)\in M\times\T$, we have 
\be\label{eq:dom}
e^{F(t)}u_\alpha^-(y,\bar t)-e^{F(s)}u_\alpha^-(x,\bar s)\leq \int_s^t e^{F(\tau)}\Big(L(\gamma,\dot\gamma,\tau)+\alpha\Big)d\tau.
\ee
\item {\sf (Calibration)} For any $(x,\theta)\in M\times\T$, there exists a {\sf backward calibrated curve} curve $\gamma_{x,\theta}^-:(-\infty,\theta]\rightarrow M$, $C^r-$smooth and ending with $\gamma_{x,\theta}^-(\theta)=x$, such that for all $s\leq t\leq\theta$, we have 
\be\label{eq:cal}
& &e^{F(t)}u_\alpha^-(\gamma_{x,\theta}^-(t),\bar t)-e^{F( s)}u_\alpha^-(\gamma_{x,\theta}^-(s),\bar s)\nonumber\\
&=&\int_s^t e^{F(\tau)}\Big(L(\gamma_{x,\theta}^-,\dot\gamma_{x,\theta}^-,\tau)+\alpha\Big)d\tau. 
\ee
\item {\sf (Viscosity)} $u_\alpha^-:M\times\T\rightarrow\R$ is a viscosity solution of the following {\sf Stationary Hamilton-Jacobi equation} (with time periodic damping):
\beq\label{eq:sta-hj}\tag{HJ$_+$}
\partial_t u+f(t)u+H(x,\partial_x u,t)=\alpha,\quad(x,t)\in M\times\T,\ \alpha\in\R.
\eeq
 \end{enumerate}
\end{thm}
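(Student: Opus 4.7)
The key tool is the exponential decay estimate $e^{F(s)-F(t)} \leq C\,e^{[f](s-t)/2}$ valid for $s \leq t$, a consequence of (H0$^-$) and the boundedness of the $1$-periodic function $t \mapsto F(t) - [f]\,t$. This decay localizes the improper integral defining $u_\alpha^-$ to a window of size $O(1/[f])$ before $t$, so the Fathi-Mather arguments from weak KAM theory adapt with the weight $e^{F(s)-F(t)}$ inserted into the action functional.

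Items (1)-(3) are then routine. Well-definedness follows because the Tonelli lower bound $L + \alpha \geq -C_0$ supplied by (H3), combined with the decay estimate, gives a finite lower bound for the weighted action, while the constant curve $\gamma(s) \equiv x$ provides a finite upper bound. Periodicity is obtained from the substitution $s \mapsto s - 1$, which leaves the integrand invariant since $F(s-1) - F(t-1) = F(s) - F(t)$ and $L$ is $\T$-periodic in time. For domination, take a near-minimizer $\eta : (-\infty, \bar s] \to M$ for $u_\alpha^-(x, \bar s)$, concatenate with the given curve $\gamma$ to form a competitor for $u_\alpha^-(y, \bar t)$, multiply through by $e^{F(t)}$ and send the error to zero; this produces \eqref{eq:dom}. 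Lipschitzness then follows from \eqref{eq:dom} applied to short geodesic segments of unit speed, using the uniform bound on $L$ restricted to bounded velocities.

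The principal obstacle is (4), the construction of the backward calibrated curve $\gamma_{x, \theta}^-$ defined on all of $(-\infty, \theta]$. My plan is to build it as a locally uniform limit of finite-horizon minimizers. For each $n \in \mathbb{N}$, the Weierstrass theorem (Theorem 3.7.1 of \cite{Fa}) supplies a $C^r$-smooth minimizer $\gamma_n : [\theta - n, \theta] \to M$ of the weighted action $\int_{\theta - n}^{\theta} e^{F(s) - F(\theta)}(L + \alpha)\,ds$ with $\gamma_n(\theta) = x$ and free initial endpoint; each $\gamma_n$ solves \eqref{eq:e-l}. Since the exponential weight concentrates action near $\theta$, superlinearity (H3) converts the uniform bound on weighted actions into uniform $L^\infty$ bounds on $\dot\gamma_n$ on every compact subinterval $[\theta - T, \theta]$. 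Arzela-Ascoli then extracts a subsequence converging locally uniformly to $\gamma^- : (-\infty, \theta] \to M$, which inherits $C^r$-smoothness and equation \eqref{eq:e-l} by standard ODE arguments; the calibration identity \eqref{eq:cal} follows by lower semicontinuity of the weighted action together with the upper bound \eqref{eq:dom}, which forces the reverse inequality to equality.

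Finally, (5) is derived from (3) and (4) by the classical Fathi argument. For the subsolution property, apply \eqref{eq:dom} to a smooth curve $\gamma$ with $\gamma(t_0) = x_0$ and arbitrary initial velocity $v \in T_{x_0} M$, using a $C^1$ test function $\phi$ with $u_\alpha^- - \phi$ attaining a local maximum at $(x_0, t_0)$; dividing by $t - t_0 > 0$, sending $t \to t_0^+$, and optimizing over $v$ via the Legendre duality \eqref{eq:led} yields $\partial_t \phi + f(t_0)\phi + H(x_0, \partial_x \phi, t_0) \leq \alpha$ at $(x_0, t_0)$. The supersolution inequality is obtained analogously, using the backward calibrated curve $\gamma_{x_0, t_0}^-$ from (4) in place of the arbitrary test curve, so that the equality in \eqref{eq:cal} produces the matching reverse inequality.
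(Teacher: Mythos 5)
Your overall route---decay estimate on the weight, periodicity by substitution, domination by concatenation, calibration by extraction of a limit of finite-horizon minimizers, and viscosity via Fathi's argument along test curves and the backward calibrated curve---is the same as the paper's. However, two of the steps you label as routine are not, and both turn on a priori \emph{velocity bounds} that your sketch never actually secures.

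First, the Lipschitz estimate (2) does not follow from the domination inequality (3) alone. Applying \eqref{eq:dom} to a short geodesic from $(x,\overline{t-\Delta t})$ to $(y,\bar t)$ bounds $e^{F(t)}u_\alpha^-(y,\bar t)-e^{F(t-\Delta t)}u_\alpha^-(x,\overline{t-\Delta t})$ from above; applying it to the constant curve bounds $e^{F(t)}u_\alpha^-(x,\bar t)-e^{F(t-\Delta t)}u_\alpha^-(x,\overline{t-\Delta t})$ from above as well, but this is the \emph{wrong} direction---one cannot subtract to compare $u_\alpha^-(y,\bar t)$ with $u_\alpha^-(x,\bar t)$. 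To close the estimate you need a matching lower bound, which requires knowing that minimizers realizing $u_\alpha^-$ have uniformly bounded velocity near their endpoint. In the paper this is done by first constructing the backward calibrated curve in item (4), then proving a uniform bound on its speed in Lemma \ref{Sec3:calibrated dot bounded}, and only afterwards proving Lipschitzness in Lemma \ref{lem:lip-dis} by surgery on the calibrated curve (or its time-reparametrization) near the endpoint. Your proposed order, with (2) obtained before (4), inverts this logical dependence and leaves the spatial and time Lipschitz bounds unproved.

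Second, in your construction of $\gamma_{x,\theta}^-$ you claim that ``superlinearity (H3) converts the uniform bound on weighted actions into uniform $L^\infty$ bounds on $\dot\gamma_n$.'' That implication is false as stated: superlinearity $L\geq k|v|-C(k)$ together with a bounded weighted action yields only an $L^1$ (indeed, uniform-integrability / Dunford--Pettis) control of $\dot\gamma_n$, not an $L^\infty$ one. The paper compensates for this by using Dunford--Pettis to extract the convergent subsequence and obtain calibration, and proves the $L^\infty$ bound separately: on each subinterval of length one, the integral mean value theorem combined with the geodesic comparison furnishes a time $s_0$ with $|\dot\gamma(s_0)|\leq C_{k_1}+C(1)$, and then the fact that $\gamma$ solves \eqref{eq:e-l} together with completeness {\bf (H4)} propagates this pointwise bound over the whole interval. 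If you want to stay with Arzel\`a--Ascoli as in your sketch, you must insert precisely this two-step argument (good-time bound, then propagation along the Euler--Lagrange flow) before invoking compactness.
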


\begin{thm}[main 1']\label{cor:1}

For $f(t)$ satisfying {\bf (H0$^0$)} and $H(x,p,t)$ satisfying {\bf (H1-H4)}, there exists a unique $c(H)\in\R$ {\sf (Ma\~n\'e Critical Value)} such that 
\begin{equation}\label{eq:cv}
u^-_{z,\bar{\varsigma}}(x,\bar{t}):=\varliminf_{\substack{\bar{\varsigma}\equiv\varsigma, \bar{t}\equiv t(mod\; 1) \\ t-\varsigma\to+\infty}}\bigg(\inf_{\substack{\gamma\in C^{ac}([\varsigma,t],M) \\\gamma(\varsigma)=z,\gamma(t)=x }}\int^t_{\varsigma}e^{F(\tau)-F(t)}\big(L(\gamma,\dot{\gamma},\tau)+c(H)\big)\mbox{d}\tau\bigg)	
\end{equation}
is well defined on $M\times\mathbb{T}$ (for any fixed $(z,\bar{\varsigma})\in 
M\times\T$) and satisfies
\begin{enumerate}
\item {\sf (Lipschitzness)} $u_{z,\bar{\varsigma}}^-:M\times\T\rightarrow \R$ is Lipschitz.
\item {\sf (Domination)} 
For any Lipschitz continuous curve $\gamma:[s,t]\rightarrow M$ connecting $(x,\bar s)\in M\times \T$ and $(y,\bar t)\in M\times\T$, we have 
\be\label{eq:dom-c}
e^{F(t)}u_{z,\bar{\varsigma}}^-(y,\bar t)-e^{F(s)}u_{z,\bar{\varsigma}}^-(x,\bar s)
\leq \int_s^t e^{F(\tau)}\Big(L(\gamma,\dot\gamma,\tau)+c(H)\Big)d\tau.
\ee
Namely, $u_{z,\bar{\varsigma}}^-\prec_f L+c(H)$.
\item {\sf (Calibration)} For any $(x,\theta)\in M\times\T$, there exists a $C^r$ curve $\gamma_{x,\theta}^-:(-\infty,\theta]\rightarrow M$ with $\gamma_{x,\theta}^-(\theta)=x$, such that for all $s\leq t\leq\theta$, we have 
\be\label{eq:cal-c}
& &e^{F(t)}u_{z,\bar{\varsigma}}^-(\gamma_{x,\theta}^-(t),\bar t)-e^{F( s)}u_{z,\bar{\varsigma}}^-(\gamma_{x,\theta}^-(s),\bar s)\nonumber\\
&=&\int_s^t e^{F(\tau)}\Big(L(\gamma_{x,\theta}^-,\dot\gamma_{x,\theta}^-,\tau)+c(H)\Big)d\tau. 
\ee
\item {\sf (Viscosity)} $u_{z,\bar{\varsigma}}^-$ is a viscosity solution of 
\beq\label{eq:sta-hj2}\tag{HJ$_0$}
\partial_t u+f(t)u+H(x,\partial_x u,t)=c(H),\quad(x,t)\in M\times\T.
\eeq
\end{enumerate}
\end{thm}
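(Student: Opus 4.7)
\smallskip
\noindent\textbf{Proof proposal.} The plan is to adapt the Mañé--Fathi approach used in the classical weak KAM theory, exploiting the key structural feature of the regime $[f]=0$: the primitive $F(t)=\int_0^t f(\tau)d\tau$ is $1$-periodic, hence $e^{F(t)}$ is bounded between two positive constants $0<m\leq e^{F(t)}\leq M$ on all of $\R$. This makes the twisted action $\int e^{F(\tau)}(L+\alpha)d\tau$ comparable, up to multiplicative constants, to the standard Tonelli action, so most Mather-theoretic estimates transfer. The proof splits naturally into (a) constructing the critical value $c(H)$, and (b) deriving the four properties of $u^-_{z,\bar\varsigma}$ from that critical value. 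I expect step (a) to contain the real difficulty, since the discounting argument used in Theorem \ref{thm:1} (where $e^{F(s)-F(t)}\to 0$ as $s\to-\infty$) is no longer available.

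\smallskip
\noindent\emph{Step 1: defining $c(H)$.} Write
\[
\Phi_\alpha(\varsigma,t;x,y):=\inf_{\substack{\gamma\in C^{ac}([\varsigma,t],M)\\\gamma(\varsigma)=x,\,\gamma(t)=y}}\int_\varsigma^t e^{F(\tau)}\bigl(L(\gamma,\dot\gamma,\tau)+\alpha\bigr)\,d\tau,
\]
which exists by the Tonelli conditions \textbf{(H1--H3)} together with \textbf{(H4)}. I will set
\[
c(H):=\inf\Bigl\{\alpha\in\R\ :\ \inf_{n\in\Z_+,\,x\in M}\Phi_\alpha(0,n;x,x)>-\infty\Bigr\},
\]
the natural analogue of Mañé's critical value for the damped action. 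Monotonicity in $\alpha$ (with affine slope $\int_0^n e^{F}\asymp n$) yields that the set on the right is an upper half-line; finiteness of $c(H)$ is then established by two a priori bounds: for $\alpha$ very large, one verifies directly that closed loops have nonnegative twisted action (using $L+\alpha\geq 0$ pointwise and positivity of $e^F$); for $\alpha$ very negative, a standard construction of long, slow closed loops on $M$ produces action $\to -\infty$, using the minimum of $L$ plus $\alpha$ and the lower bound $e^F\geq m>0$.

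\smallskip
\noindent\emph{Step 2: properties of $u^-_{z,\bar\varsigma}$.} Once $c(H)$ is fixed, the liminf in (\ref{eq:cv}) is finite: boundedness below comes from the very definition of $c(H)$ (closed-loop actions are bounded below, and we convert general $(\varsigma,t)\mapsto(z,x)$ to loops using a short connector curve from $x$ to $z$); boundedness above comes from evaluating the action on an explicit competitor. The four listed properties are then obtained as follows.
\emph{Lipschitzness} follows from a uniform speed bound on action minimizers (a standard consequence of superlinearity and the Euler--Lagrange equation (\ref{eq:e-l}), with the extra damping term $-f(t)L_v$ harmless because $f$ is bounded), which gives equi-Lipschitz estimates for the approximating quantities and passes to the liminf.
\emph{Domination} (\ref{eq:dom-c}) is immediate from concatenating any admissible $\gamma:[s,t]\to M$ with near-minimizers ending at $(x,\bar s)$ and taking infima.
\emph{Calibration} (\ref{eq:cal-c}) is built by a diagonal/Arzelà--Ascoli extraction on minimizing sequences $\gamma_n:[\varsigma_n,\theta]\to M$ with $\varsigma_n\to-\infty$, using Step 1's uniform Lipschitz bound; the limit curve is $C^r$ by the Weierstrass regularity theorem cited after (\ref{eq:e-l}).
\emph{Viscosity} for (\ref{eq:sta-hj2}) then follows from the combination of domination and calibration via Fathi's standard argument \cite{Fa}, in essentially the same form as in Theorem \ref{thm:1}.

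\smallskip
\noindent\emph{Main obstacle.} The crux is Step 1, specifically the dichotomy that establishes a sharp transition value $c(H)$. In the dissipative case \textbf{(H0$^-$)} of Theorem \ref{thm:1} the factor $e^{F(s)-F(t)}\to 0$ as $s\to-\infty$, so action integrals over $(-\infty,t]$ automatically converge; here $F$ is only bounded and oscillatory, so the analysis must proceed through closed-loop actions and a genuine balance between $L$ and $\alpha$, much as in Mather's original argument. Verifying that the critical $\alpha=c(H)$ is independent of the base point $(z,\bar\varsigma)$ chosen in (\ref{eq:cv}), and is characterized solely by $H$ and $f$, will require a careful two-sided comparison of $\Phi_{c(H)}$ along different base points, using $1$-periodicity of $F$ and $L$ in time to translate initial data without changing the asymptotics.
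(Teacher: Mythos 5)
Your plan is sound and follows the same Mañé--Fathi route the paper takes, but where you locate the ``real difficulty'' the paper has a much shorter path, and this is worth making precise. You correctly observe that when $[f]=0$ the primitive $F$ is $1$-periodic so $e^{F}$ is pinched between positive constants; but you only use this to say the twisted action is \emph{comparable} to the untwisted one. The paper's key move is sharper: define $\mathbf{L}(x,v,t):=e^{F(t)}L(x,v,t)$ and note that under \textbf{(H0$^0$)} this is \emph{itself} a time-$1$-periodic Tonelli Lagrangian with the same Euler--Lagrange flow. The critical value \eqref{eq:def_critical}, the finiteness, Lipschitzness and domination properties of the Peierls barrier $\textbf{h}^\infty_{c(H)}$, and the dichotomy $\textbf{h}^\infty_\alpha\equiv\mp\infty$ for $\alpha\gtrless c(H)$ then follow verbatim from Contreras--Iturriaga--Morgado \cite{CIM}, and the function $u^-_{z,\bar\varsigma}$ in \eqref{eq:cv} is just $e^{-F(\bar t)}$ times that Peierls barrier plus an explicit bounded correction, as displayed in \eqref{eq3:def_u}. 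With this reduction in hand, properties (1) and (2) are cited, (3) is the diagonal extraction you describe, and (4) is Fathi's argument as you say.

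The genuine gap in your proposal is thus Step~1 as you wrote it: you propose to re-establish the finiteness of the liminf in \eqref{eq:cv} at the critical $\alpha=c(H)$ by ``a genuine balance between $L$ and $\alpha$, much as in Mather's original argument,'' but you do not carry it out, and this is exactly the part the paper does \emph{not} re-derive. The dichotomy that the Mañé potential is finite at exactly one $\alpha$, that the barrier is Lipschitz there, and that the limit is independent of the base point $(z,\bar\varsigma)$, are collectively a nontrivial package (Proposition~2 of \cite{CIM} and its corollaries). Without either reproducing that proof or invoking it by reference --- which becomes straightforward once you recognize $\mathbf{L}=e^{F}L$ as Tonelli --- your argument stops before the hardest point. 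I'd recommend recasting your Step~1 as a two-line reduction to \cite{CIM} via the substitution $L\mapsto\mathbf{L}$; then your Step~2 goes through essentially unchanged.
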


Following the terminologies in \cite{Fa,MS}, it's appropriate to call the function given in Theorem \ref{thm:1} (resp. Theorem \ref{cor:1}) a {\sf weak KAM solution}. Such a solution can be used to pick up different types of invariant sets with variational meanings of \eqref{eq:dis}:
\begin{thm}[main 2]\label{thm:2}
For $f(t)$ satisfying {\bf (H0$^-$)}, $H(x,p,t)$ satisfying {\bf (H1-H4)} and any $\alpha\in\R$, we can get the following sets: \smallskip
\begin{itemize}
\item {\sf (Aubry Set)} $\gamma:\R\rightarrow M$ is called {\sf globally calibrated}, if for any $s<t\in\R$, (\ref{eq:cal}) holds on $[s,t]$. There exists a $\varphi_L^t-$invariant set defined by
\[
\wt\cA:=\{(\gamma(t),\dot\gamma(t),\bar t)\in TM\times\T|\gamma \text{ is globally calibrated}\}
\]
with the following properties:
\begin{itemize}
\item $\wt\cA$ 
is a Lipschitz graph over the {\sf projected Aubry set} $\cA:=\pi\wt\cA\subset M\times\T$, where $\pi:T^*M\times\T\rightarrow M\times\T$ is the standard projection.
\item $\wt \cA$ 
is upper semicontinuous w.r.t. $L:TM\times\T\rightarrow\R$
\item $u_\alpha^-$ is differentiable on $\cA$.\medskip
\end{itemize}
\item {\sf (Mather Set)} Suppose $\mathfrak M_{L}$ is the set of all $\varphi_L^t-$invariant probability measure, then $\tilde{\mu}\in\mathfrak M_L$ is called a {\sf Mather measure} if it minimizes
\[
\min_{\tilde{\nu}\in\mathfrak M_L}\int_{TM\times\T}L+\alpha- f(t)u_\alpha^-\mbox{d}\tilde{\nu}.
\]
Let's denote by $\mathfrak M_m$ the set of all Mather measures. Accordingly, the {\sf Mather set} is defined by
\[
\wt\cM:=\overline{\bigcup\{supp\ \tilde{\mu}|\tilde{\mu}\in\mathfrak M_m\}} 
\]
which satisfies
\begin{enumerate}
\item $\wt\cM\neq\emptyset$ and  $\wt\cM\subset\wt\cA$.
\item $\wt\cM$ is a Lipschitz graph over the {\sf projected Mather set} $\cM:= \pi\wt\cM \subset M \times\T$.
\end{enumerate}
\medskip

\item {\sf (Maximal Global Attractor)} Define 
\ben
\wh \Sigma_H^-&:=& \big\{(x,p,\bar s,\alpha-f(s)u-H(x,p,s), u)\in T^*M\times T^*\T\times\R\big|\\
& & \quad u> u_\alpha^-(x,s)\big\}
\een
and
\ben
\wh \Sigma_H^0&:=&\big\{(x,p,\bar s,\alpha-f(s)u-H(x,p,s), u)\in T^*M\times T^*\T\times\R\big|\\
& &\quad u= u_\alpha^-(x,s)\big\},
\een
then $\Om:=\bigcap_{t\geq 0}\wh\varphi_{ H}^t( \wh \Sigma_H^-\cup\wh \Sigma_H^0)$ is the maximal $\wh\varphi_{ H}^t-$invariant set, which satisfies:
\begin{enumerate}
\item  If the $p-$component of $\Om$ is bounded, then the $u-$ and $I-$component of $\Om$ are also bounded. 
\item If $\Om$ is compact, it has to be 
a {\sf global attractor} in the sense that for any point $(x,p,\bar{s},I,u)\in T^*M\times T^*\T\times\R$ and any open neighborhood $\cU\supseteq\Om$, there exists a $T_{\Om}(\cU)$ such that for all $t\geq T_{\Om}(\cU)$, $\wh\varphi_{ H}^t(x,p,\bar{s})\in \cU$. 
Besides, the followings hold: 
\begin{itemize}
\item $\Om$ is a maximal attractor set, i.e. it isn't strictly contained in any other global attractor;
\item $\wh\cA$ is the maximal invariant set contained in $\wh \Sigma_H^0$, where 
\ben
\wh\cA&:=&\Big\{\Big(\cL(x,\partial_x u_\alpha^-(x,s), \bar{s}), \partial_tu_\alpha^-(x,s), u_\alpha^-(x,s)\Big)\in TM\times\\
& &\quad  T^*\T\times\R\Big|(x,\bar{s})\in\cA\Big\}.
\een
\end{itemize}
\end{enumerate}

\end{itemize}
\end{thm}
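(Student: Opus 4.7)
The plan is to handle the three invariant sets in sequence, each time reducing to the weak KAM solution $u_\alpha^-$ from Theorem~\ref{thm:1} and exploiting the conservation law $\frac{d}{dt}\wh H=-f(t)\wh H$ recorded in Remark~\ref{rmk:pro}.

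For $\wt\cA$, I first establish nonemptiness by a compactness argument. Take any backward calibrated curve $\gamma_{x,\theta}^-$ from Theorem~\ref{thm:1}(4); Lipschitzness of $u_\alpha^-$, the Legendre duality, and superlinearity of $L$ together force a uniform $L^\infty$ bound on $|\dot\gamma_{x,\theta}^-(s)|$, so the time-translates $\gamma_n(s):=\gamma_{x,\theta}^-(s-n)$ are equi-Lipschitz solutions of \eqref{eq:e-l}; Arzel\`a--Ascoli then yields a $C^r$ subsequential limit $\gamma^\ast:\R\to M$ which is globally calibrated. The Lipschitz graph property follows the Fathi template \cite{Fa}: if two globally calibrated curves cross at $(x,\bar t)$, differentiability of $u_\alpha^-$ at that base point forces $\dot\gamma_1(t)=H_p(x,\partial_x u_\alpha^-(x,\bar t),\bar t)=\dot\gamma_2(t)$, and strict convexity of $H$ in $p$ upgrades uniqueness to the quantitative Lipschitz estimate on the graph. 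Upper semicontinuity in $L$ is a similar equicontinuity argument with $L$ now varying, and differentiability of $u_\alpha^-$ on $\cA$ follows by bracketing $u_\alpha^-$ between sub- and super-tangent affine functions produced by two-sided calibration along an $\wt\cA$-orbit.

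For $\wt\cM$, the first step is to verify
\[
\int_{TM\times\T}\bigl(L+\alpha-f(t)u_\alpha^-\bigr)\,d\tilde\nu\ \geq\ 0
\]
for every $\varphi_L^t$-invariant probability $\tilde\nu$, with equality characterizing support concentrated on globally calibrated orbits. The inequality is obtained by applying \eqref{eq:dom} on long time intervals and using periodicity of $u_\alpha^-$ in $t$ to divide out the bounded boundary term. Existence of a minimizer comes from superlinearity of $L$, which provides Prokhorov tightness along any minimizing sequence, together with lower semicontinuity of the functional under weak-$\ast$ convergence; to see that the infimum actually equals $0$, I push forward empirical measures along the $\gamma^\ast$ just constructed and apply Krylov--Bogolyubov, obtaining an invariant measure whose support lies in $\wt\cA$ and realizes the equality case. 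The inclusion $\wt\cM\subset\wt\cA$ and the induced Lipschitz graph property for $\wt\cM$ then follow immediately.

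For the Maximal Global Attractor, the starting observation is that $\wh H\equiv 0$ on $\wh\Sigma_H^-\cup\wh\Sigma_H^0$, and $\frac{d}{dt}\wh H=-f(t)\wh H$ makes $\{\wh H=0\}$ invariant under $\wh\varphi_H^t$. Since $\dot u=L+\alpha-f(t)u$, the product rule gives $\frac{d}{dt}(e^{F(t)}u)=e^{F(t)}(L+\alpha)$, which combined with \eqref{eq:dom} shows $e^{F(t)}(u-u_\alpha^-)$ is monotone nondecreasing along orbits inside $\{\wh H=0\}$. Hence $\wh\Sigma_H^-\cup\wh\Sigma_H^0$ is forward invariant and $\Om$ is $\wh\varphi_H^t$-invariant by construction. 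For (1), a bounded $p$-component bounds $L+\alpha$ on $\Om$, so the scalar ODE $\dot u+f(t)u=(\text{bounded})$ with $[f]>0$ admits a unique bi-infinite bounded solution, which forces boundedness of $u$ (and then of $I$ via $\wh H=0$). For (2), global attraction is driven by exponential decay of $\wh H$ combined with $C^0$-convergence of the associated Lax--Oleinik semigroup to $u_\alpha^-$ (parallel to the conformally symplectic setting of \cite{CCJW,WWY2,WWY3}); maximality of $\Om$ follows because any strictly larger invariant set would violate the same monotonicity, and $\wh\cA$ is cut out precisely by equality in that monotonicity, making it the maximal invariant subset of $\wh\Sigma_H^0$. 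The hard part will be this attractor portion: verifying that an arbitrary orbit is eventually trapped in any prescribed open neighborhood of $\Om$ requires carefully coupling dissipative convergence of $\wh H$ to $0$ with convergence of the driven Lax--Oleinik semigroup to $u_\alpha^-$, and maximality demands a delicate ordering argument among invariant subsets of $\{\wh H=0\}\cap\{u\geq u_\alpha^-\}$.
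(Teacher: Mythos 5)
Your architecture mirrors the paper's: extract $\widetilde{\mathcal A}$ from backward calibrated curves via a compactness limit, read off the Lipschitz graph from differentiability of $u_\alpha^-$ plus the uniform speed bound, verify the nonnegativity
\[
\int_{TM\times\T}\bigl(L+\alpha-f(t)u_\alpha^-\bigr)\,d\tilde\nu\geq 0
\]
for invariant $\tilde\nu$, and build the attractor from forward invariance of $\widehat\Sigma_H^-\cup\widehat\Sigma_H^0$ driven by the monotonicity of $e^{F(t)}\bigl(u-u_\alpha^-\bigr)$ along $\{\widehat H=0\}$. (The paper constructs a globally calibrated curve as an $\alpha$-limit of the backward flow rather than via time-translates and Arzel\`a--Ascoli, but for a $1$-periodic Lagrangian these are the same argument.)

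There is one concrete gap in the Mather-measure step as you have phrased it. You claim to apply the domination \eqref{eq:dom} on $[0,T]$ and then ``divide out the bounded boundary term using periodicity of $u_\alpha^-$.'' But the boundary term in \eqref{eq:dom} is $e^{F(T)}u_\alpha^-(\gamma(T),\bar T)$, and under {\bf (H0$^-$)} we have $F(T)\sim T[f]\to+\infty$, so $e^{F(T)}u_\alpha^-$ is \emph{not} bounded and $e^{F(T)}/T\to\infty$; dividing by $T$ and sending $T\to\infty$ does not kill it. The fix is to first strip the exponential weight: \eqref{eq:dom} says $t\mapsto e^{F(t)}u_\alpha^-(\gamma(t),\bar t)-\int_0^t e^{F(\tau)}(L+\alpha)\,d\tau$ is nonincreasing, which differentiating a.e.\ and cancelling $e^{F(t)}$ yields the pointwise inequality
\[
\frac{d}{dt}\,u_\alpha^-(\gamma(t),\bar t)\ \leq\ L(\gamma,\dot\gamma,t)+\alpha-f(t)\,u_\alpha^-(\gamma(t),\bar t)\quad\text{a.e.}
\]
Integrating this on $[0,T]$ produces the genuinely bounded boundary term $u_\alpha^-(\gamma(T),\bar T)-u_\alpha^-(\gamma(0),\bar 0)$, and dividing by $T$ gives the inequality after Birkhoff averaging. (Once rephrased this way your route is actually cleaner than the paper's, which uses an integration by parts against $de^{-F(t)}$ and, read literally, multiplies the inequality $u_\alpha^-(\gamma(t),t)\leq\int_{-\infty}^t e^{F(s)-F(t)}(L+\alpha)\,ds$ by $f(t)$ without worrying about the sign of $f$.)

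Two further points worth making precise. For upper semicontinuity of $L\mapsto\widetilde{\mathcal A}$, ``a similar equicontinuity argument with $L$ varying'' skips the step that really matters: a $C^r$-limit of curves globally minimizing $h^{a,b}_\alpha$ for $L_n$ is globally minimizing for $L$, but to conclude it is \emph{calibrated} you need the identity $\lim_{s\to-\infty}h^{s,t}_\alpha(z,x)=e^{F(t)}u_\alpha^-(x,t)$, which relies specifically on $[f]>0$ and on uniqueness of the weak KAM solution. That equivalence is the engine of the paper's Lemma on the alternative characterization of $\widetilde{\mathcal A}$. Finally, you correctly flag global attraction from an arbitrary initial condition as the hard part; note that the paper effectively leaves this unargued beyond asserting that compactness of $\Om$ implies it, so your proposed strategy of combining exponential decay of $\widehat H$ with convergence of the Lax--Oleinik evolution would be filling in a real omission rather than reproducing an existing proof.
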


\begin{thm}[main 2']\label{cor:critical}
For $f(t)$ satisfying {\bf (H0$^0$)} and $H(x,p,t)$ satisfying {\bf (H1-H4)}, the Ma\~n\'e Critical Value $c(H)$ has an alternative expression 
\be\label{eq:mea-var}
-c(H)=\dfrac{\inf_{\tilde{\mu}\in\mathfrak M_{ L}}\int_{TM\times\T} e^{F(t)}L(x,v,t)\mbox{d}\tilde{\mu}}{\int_0^1e^{F(t)}\mbox{d}t}.
\ee
Moreover, the minimizer achieving the right side of (\ref{eq:mea-var}) has to be a {\sf Mather measure}. Similarly we can define the {\sf Mather set} $\wt\cM$ as the union of the support sets of all the Mather measures, which is Lipschitz-graphic over the {\sf projected Mather set} $\cM:=\pi\wt\cM$.
\end{thm}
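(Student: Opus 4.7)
The plan is to prove the variational formula by establishing both inequalities separately and then to extract the Mather-measure characterization from the equality case. The key structural feature of the critical regime $[f]=0$ is that $F(\cdot)$, hence $e^{F(\cdot)}$, is itself $1$-periodic, so $e^{F(t)}u(x,t)$ lives naturally on $M\times\T$ whenever $u$ does, and time averages of $e^{F(\tau)}$ along long intervals converge to $\int_0^1 e^{F(t)}\,dt$; I will exploit this throughout.

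For the inequality $-c(H)\leq\inf_{\tilde\mu}\int e^{F(t)}L\,d\tilde\mu\big/\int_0^1 e^{F(t)}\,dt$, I would take the weak KAM solution $u:=u^-_{z,\bar\varsigma}$ from Theorem \ref{cor:1} and any $\tilde\mu\in\mathfrak M_L$. After mollifying $u$ (or arguing via Rademacher since $u$ is Lipschitz), invariance of $\tilde\mu$ under $\varphi_L^t$ combined with $\T$-periodicity of $(x,t)\mapsto e^{F(t)}u(x,t)$ yields $\int\tfrac{d}{d\tau}[e^{F(\tau)}u(x(\tau),\tau)]\,d\tilde\mu=0$. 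Expanding this derivative and applying Fenchel's inequality $\partial_x u\cdot v\leq L(x,v,t)+H(x,\partial_x u,t)$ together with the viscosity subsolution inequality $\partial_t u+f(t)u+H(x,\partial_x u,t)\leq c(H)$ furnished by Theorem \ref{cor:1}, I obtain
\[
0\ \leq\ \int e^{F(t)}\bigl(L(x,v,t)+c(H)\bigr)\,d\tilde\mu,
\]
which rearranges to the claim after noting that the $\T$-projection of $\tilde\mu$ is normalized Lebesgue.

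For the reverse inequality, I would realize the infimum via a backward calibrated curve. Take the $C^r$ curve $\gamma^-:(-\infty,0]\to M$ supplied by Theorem \ref{cor:1}, form the empirical probability measures
\[
\tilde\mu_T\ :=\ \tfrac{1}{T}\int_{-T}^{0}\delta_{(\gamma^-(\tau),\dot\gamma^-(\tau),\bar\tau)}\,d\tau
\]
on $TM\times\T$, and extract a weak-$*$ limit $\tilde\mu_\infty$ along some $T_n\to\infty$; a priori bounds on $\dot\gamma^-$ (from the Tonelli hypotheses, via Fenchel duality, and the Lipschitz constant of $u$) keep the supports uniformly compact, and a standard computation shows $\tilde\mu_\infty\in\mathfrak M_L$. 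Applying the calibration identity \eqref{eq:cal-c} on $[-T,0]$ and the uniform boundedness of $u$ on $M\times\T$,
\[
\left|\tfrac{1}{T}\int_{-T}^{0}e^{F(\tau)}(L+c(H))\,d\tau\right|\ \leq\ \tfrac{2\|e^{F}\|_{\infty}\|u\|_{\infty}}{T}\ \longrightarrow\ 0,
\]
while $\tfrac{1}{T}\int_{-T}^{0}e^{F(\tau)}\,d\tau\to\int_0^1 e^{F(t)}\,dt$ by periodicity; passing to the limit yields $\int e^{F(t)}L\,d\tilde\mu_\infty=-c(H)\int_0^1 e^{F(t)}\,dt$, so $\tilde\mu_\infty$ saturates \eqref{eq:mea-var}.

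For the final claim about Mather measures and the graph property, if $\tilde\mu$ achieves the infimum then the chain of inequalities above must be equalities $\tilde\mu$-a.e., so Fenchel equality forces $v=\partial_p H(x,\partial_x u,t)$ and the subsolution is tight on $\mathrm{supp}(\tilde\mu)$ wherever $u$ is differentiable; a standard argument then upgrades this to the statement that orbits through $\mathrm{supp}(\tilde\mu)$ are globally calibrated, whence $\wt\cM:=\overline{\bigcup\{\mathrm{supp}\,\tilde\mu\}}\subset\wt\cA$ and the Lipschitz graph property of $\wt\cM$ descends from the corresponding property of $\wt\cA$. I expect the main obstacle to be precisely that graph property of $\wt\cA$ in this critical regime: it requires a crossing-type lemma adapted to the $e^{F(\tau)}$-weighted action, combined with the semi-concavity of the weak KAM solution, and the damping weight must be carried carefully through each comparison estimate — a technically delicate adaptation of the classical Mather argument to the dissipative setting.
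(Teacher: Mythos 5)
Your two inequality arguments match the paper's strategy in substance: establishing $\int_{TM\times\T}e^{F(t)}(L+c(H))\,d\tilde\mu\geq 0$ via domination for every invariant $\tilde\mu$ (the paper routes this through an ergodic generic point and time averages, you phrase it as $\int X_L\bigl(e^{F}u\bigr)\,d\tilde\mu=0$ plus Fenchel, but these are interchangeable after the usual Rademacher/mollification care), and then producing a minimizing invariant measure as a weak-$*$ limit of empirical measures along a backward calibrated curve, using the calibration identity and boundedness of the weak KAM solution to kill the boundary terms. Where you diverge, and where there is a genuine gap, is the graph property of $\wt\cM$. You propose to deduce it from the Lipschitz graph property of $\wt\cA$, but the paper has only established that property for $\wt\cA$ in the case $[f]>0$ (Section \ref{s3}.1), not in the critical case $[f]=0$ treated here; you recognize this yourself and forecast a ``technically delicate adaptation'' of Mather's crossing lemma to a $e^{F(\tau)}$-weighted action. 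That adaptation is unnecessary. The paper's decisive observation (made explicit in Appendix \ref{a1}) is that, precisely because $[f]=0$, the weight $e^{F(t)}$ is $1$-periodic, so $\wt L(x,v,t):=e^{F(t)}L(x,v,t)$ is itself an ordinary time-periodic Tonelli Lagrangian whose Euler--Lagrange flow coincides with $\varphi_L^t$ (the factor $e^{F}$ produces exactly the $-f(t)L_v$ damping term in \eqref{eq:e-l}). Hence Mather's classical Cross Lemma and graph theorem apply verbatim to $\wt L$, and the Lipschitz graph property of $\wt\cM$ is immediate --- no weighted crossing argument and no detour through $\wt\cA$ are needed. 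Incorporating that reduction would close the gap and make your third step as short as the paper's.
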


\subsection{Parametrized viscosity solutions and asymptotic dynamics} In this section we deal with two kinds of parametrized viscosity solutions with practical meanings. The first case corresponds to a Hamiltonian 
\be\label{eq:ham-par}
\wh H_{\dt}(x,p,t,I, u):=I+ H(x,p,t)+f_\dt(t)u,  
\ee
with $(x,p,\bar{t},I,u)\in T^*M\times T^*\T\times\R$ and $f_\dt\in C^r(\T,\R)$ continuous of $\dt\in\R$. For suitable $\alpha\in\R$,
we can seek the weak KAM solution of 
\be\label{eq:hj-par}
\partial_tu_{\dt}(x,t)+H(x,\partial_x u_{\dt},t)+f_\dt(t) u_{\dt}=\alpha
\ee
as we did in previous theorems. Consequently, it's natural to explore the convergence of viscosity solutions w.r.t. the parameter $\dt$:
\begin{thm}[main 3]\label{thm:3}
Suppose $f_\dt$ converges to $f_0$ w.r.t. the uniform norm as $\dt\rightarrow 0_+$ such that $[f_0]=0$ and  the right derivative of $f_\dt$ w.r.t. $\dt$ exists at $0$, i.e.
\be\label{eq:1-jet}
f_1(t):=\lim_{\dt\rightarrow 0_+}\frac{f_\dt(t)-f_0(t)}{\dt}>0.
\ee
If $H(x,p,t)$ satisfying {\bf (H1-H4)}, 
then there exists a unique $c(H)\in\R$ given by (\ref{eq:mea-var}) and a $\dt_0>0$, such that the weak KAM solution $u^-_\dt(x,t)$ of  (\ref{eq:hj-par}) associated with $f_\dt$ and $\alpha_{\dt}\equiv c(H)$ for all $\dt\in(0,\dt_0]$ converges to a uniquely identified viscosity solution of 
\be\label{eq:hj-criti}
\partial_tu(x,t)+H(x,\partial_x u,t)+f_0(t) u=c(H),
\ee
which equals 
\[
\sup\Big\{u\prec_{f_0}L+c(H)\Big|\int_{TM\times\T}e^{F_0(t)} f_1(t)\cdot u(x,t)d
\tilde{\mu}\leq 0,\ \forall\; \tilde{\mu}\in\mathfrak M_m(\dt=0)\Big\}
\]
with $F_0(t)=\int_0^tf_0(\tau)\mbox{d}\tau$ and $\mathfrak M_m(0)$ being the set of Mather measures for the system with $\dt=0$.
\end{thm}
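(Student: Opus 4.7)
The plan is threefold: extract a uniformly convergent subsequence of $u^-_\delta$, identify any limit as a viscosity solution of \eqref{eq:hj-criti} satisfying the Mather admissibility constraint, and prove this characterization is unique (as the maximal admissible subsolution), forcing convergence of the full family.

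Equi-Lipschitzness of $\{u^-_\delta\}$ follows from Theorem~\ref{thm:1}(2) combined with uniform $C^0$-control of $f_\delta$, and equi-boundedness from testing the variational formula defining $u^-_\delta$ against curves shadowing orbits in the $\delta=0$ Mather set: the identity $\int e^{F_0}(L+c(H))\,d\tilde\mu_0=0$ (from Theorem~\ref{cor:critical}) together with $F_\delta-F_0=O(\delta)$ keeps the action integrals uniformly bounded as $\delta\to 0^+$. Arzel\`a--Ascoli and viscosity-solution stability under the uniform convergence $f_\delta\to f_0$ then yield $u^-_{\delta_k}\to u_0$ with $u_0$ a viscosity solution of \eqref{eq:hj-criti}. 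To extract the Mather constraint, fix $\tilde\mu_0\in\mathfrak M_m(\delta=0)$. The subsolution inequality combined with Fenchel--Young gives a.e.\ on $TM\times\T$,
\[
\partial_t u^-_\delta+\langle \partial_x u^-_\delta,v\rangle+f_\delta\,u^-_\delta\leq L(x,v,t)+c(H);
\]
mollify and integrate against $e^{F_0(t)}\,d\tilde\mu_0$. The $\varphi_L^t|_{\delta=0}$-invariance of $\tilde\mu_0$ applied to the observable $(x,v,t)\mapsto e^{F_0(t)}u^-_\delta(x,t)$ reduces the derivative terms to $-\int e^{F_0}f_0\,u^-_\delta\,d\tilde\mu_0$, while Theorem~\ref{cor:critical} gives $\int e^{F_0}(L+c(H))\,d\tilde\mu_0=0$. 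Combining yields $\int e^{F_0}(f_\delta-f_0)u^-_\delta\,d\tilde\mu_0\leq 0$; dividing by $\delta_k$ and passing to the limit produces the admissibility $\int e^{F_0}f_1\,u_0\,d\tilde\mu_0\leq 0$.

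The main obstacle is the maximality step. Given any admissible $u\prec_{f_0}L+c(H)$, the goal is to construct shifts $\lambda_\delta\to 0$ such that $u+\lambda_\delta$ is a genuine viscosity subsolution of the $\delta$-equation \eqref{eq:hj-par} with $\alpha_\delta=c(H)$; the comparison principle for \eqref{eq:hj-par} at $\delta>0$ (valid because $[f_\delta]>0$ provides strict monotonicity in $u$, cf.\ \cite{CCJW}) then gives $u+\lambda_\delta\leq u^-_\delta$, so $u\leq u_0$ upon sending $\delta_k\to 0^+$. The excess to absorb, $(f_\delta-f_0)u=\delta f_1 u+o(\delta)$, is controlled by the Mather constraint only in the integrated sense against each $\tilde\mu\in\mathfrak M_m(0)$; converting this averaged bound into a workable pointwise correction $\lambda_\delta$ is the key technical difficulty, and is to be carried out via a duality argument on the cone of holonomic probability measures, in the spirit of the discounted Hamilton--Jacobi selection principle of Davini--Fathi--Iturriaga--Zavidovique adapted to the conformally symplectic setting of \cite{CCJW,WWY2}. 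Once maximality is established, $u_0$ is independent of the subsequence, so $u^-_\delta$ converges along the full family to the asserted viscosity solution of \eqref{eq:hj-criti}.
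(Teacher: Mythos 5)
Your overall frame -- equi-boundedness $\Rightarrow$ extraction of a limit $u_0$, identifying $u_0$ as a viscosity solution, proving $u_0\in\cF_-$ via integration against Mather measures, and then a maximality argument -- matches the paper's strategy in the first two steps, and your derivation of the admissibility constraint $\int e^{F_0}f_1\,u_0\,d\tilde\mu_0\leq 0$ is essentially Proposition~\ref{prop:geq}. But the maximality step, which you correctly identify as the crux, is where your plan breaks: you propose to build shifts $\lambda_\delta$ so that $u+\lambda_\delta$ becomes a subsolution of the $\delta$-equation and then invoke the comparison principle, claiming it is ``valid because $[f_\delta]>0$ provides strict monotonicity in $u$.'' This is not true: $[f_\delta]>0$ is only an average condition, and nothing forces $f_\delta(t)\geq 0$ pointwise. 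The standard comparison principle for $\partial_t u + f_\delta(t)u + H = c(H)$ requires pointwise monotonicity in $u$, i.e.\ $f_\delta\geq 0$. The paper explicitly flags this in the remark after Theorem~\ref{thm:3}: the authors did not assume $f_\delta$ nonnegative, which invalidates the comparison principle and is precisely why the DFIZ-style shift argument you envision cannot be carried over directly. You also leave the conversion of the averaged Mather constraint to a pointwise correction as ``to be carried out via a duality argument,'' which is the hard part and is not supplied.

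The paper sidesteps the comparison principle entirely. In Proposition~\ref{prop:leq}, it integrates the differential inequality along the \emph{$\delta$-calibrated curve} $\gamma^-_{\delta,x,s}$ of $u^-_\delta$ (where $u^-_\delta$ is differentiable, by Proposition~\ref{Sec3:pro_differentiable}), producing the exact identity
\[
u^-_\delta(x,s)\geq \omega(x,s)-\int_{TM\times\T}\omega\,e^{F_0}f_1\,d\tilde\mu^\delta_{x,s}
\]
for any subsolution $\omega\prec_{f_0}L+c(H)$, with $\tilde\mu^\delta_{x,s}$ an explicit finite measure supported on the $\delta$-calibrated curve. Lemma~\ref{lem:mat-mea} then shows any normalized weak limit of $\tilde\mu^\delta_{x,s}$ is a Mather measure, so if $\omega\in\cF_-$ the correction term tends to zero and $u_0\geq\omega$. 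This yields $u_0=u_0^*$ without ever comparing a shifted subsolution to $u^-_\delta$. Separately, your equi-boundedness sketch is too thin: $\int_{-\infty}^t e^{F_\delta(s)-F_\delta(t)}\,ds\sim 1/[f_\delta]\to\infty$ as $\delta\to 0^+$, so a naive bound on the integrand does not give uniform control; the paper's Lemma~\ref{lem:equi-lip} handles this with a careful sign analysis of $T^{\delta,-}_s$ and comparison of the ODEs for $e^{F_\delta}u_\delta$ versus $e^{F_0}u_\delta$, exploiting $f_\delta>f_0$ on $[0,\delta_0]$, which is not captured by just invoking $F_\delta-F_0=O(\delta)$.
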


\begin{rmk}
 The convergence of the viscosity solutions $1^{st}$ order PDEs was earlier discussed in \cite{CCIZ,DFIZ,WYZ,Z}, where the {\sf Comparison principle} was used to guarantee the uniqueness of viscosity solution for \eqref{eq:hj-par}. However, in our case we didn't assume  $f_\dt$ to be nonnegative, which invalidates this principle and brings new new difficulties to prove the equi-boundedness and equi-Lipschitzness of $\{u_\dt^-\}_{\dt>0}$. Fortunately, by analyzing the properties of the {\sf Lax-Oleinik semigroups} we can still overcome these difficulties,
 see Sec. \ref{s4} for more details.

\end{rmk}


The second parametrized problem we concern takes $M=\T$ and a mechanical $H(x,p,t)$. We can involve a cohomology parameter $c\in H^1(\T,\R)$ to 
 \be
\wh H(x,p,t,I,u)=I+\underbrace{\frac1 2(p+c)^2+V(x,t)}_{H(x,p,t)}+f(t)u
\ee
of which $H(x,p,t)$ surely satisfies {\bf (H1-H4)},
then (\ref{eq:dis}) becomes
\be\label{eq:ode1}
 {\sf (Dissipative)\quad } \left\{
\begin{aligned}
\dot x&=p+c\\
\dot p&=-V_x-f(t)p\\
\dot t&=1\\
\dot I &=-V_t-f'(t)u-f(t)I\\
\dot u&=\frac12(p^2-c^2)-V(x,t)-f(t)u.
\end{aligned}
\right.
\ee
In physical models, the former three equations of (\ref{eq:ode1}) is usually condensed into a single equation
\be\label{eq:ode0}
\ddot x+V_x(x,t)+f(t)(\dot x-c)=0,\quad (x,t)\in M\times\T.
\ee
\begin{thm}[main 4]\label{thm:4}
For $f(t)$ satisfying {\bf (H0$^-$)}, the following conclusions hold for equation (\ref{eq:ode0}):
\begin{itemize}
\item For any $c\in H^1(\T,\R)$, there exists a unified {\sf rotation number} of $\wt\cA(c)$, which is defined by 
\[
\rho(c):=\lim_{T\rightarrow+\infty}\frac1 T\int_0^Td\gamma,\quad\forall\ \text{globally calibrated curve } \gamma.
\]
\item $\rho(c)$ is continuous of $c\in H^1(\T,\R)$; Moreover, we have 
\be\label{eq:rot-num-app}
|\rho(c)-c|\leq \varsigma([f])\cdot\|V(x,t)\|_{C^1}
\ee
for some constant $\varsigma$ depending only on $[f]$. Consequently, for any  $p/q\in \Q$  irreducible, there always exists a $c_{p/q}$ such that $\rho(c_{p/q})=p/q$. 
\item There exists an compact maximal global attractor $\Om\subset T^*\T\times T^*\T\times\R$ of the flow $\wh \varphi_{ H}^t$. 
\end{itemize}
\end{thm}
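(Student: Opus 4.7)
The plan is to establish the three bullet points of Theorem \ref{thm:4} in sequence, leveraging the Lipschitz-graph structure of $\wt\cA$ from Theorem \ref{thm:2} and the explicit damped ODE (\ref{eq:ode0}). For existence and uniformity of $\rho(c)$, since $M=\T$ is one dimensional and $\wt\cA(c)$ is a Lipschitz graph over $\cA(c)\subset\T\times\T$, every globally calibrated curve is an integral curve of a single Lipschitz time-dependent vector field on $\cA(c)$. The classical exchange argument (splicing two calibrated curves at a transverse crossing and contradicting (\ref{eq:cal}) via strict convexity of $L$) rules out crossings between lifted calibrated curves $\tilde\gamma_1,\tilde\gamma_2\colon\R\to\R$, so they stay at bounded distance. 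Then $\tfrac1T(\tilde\gamma(T)-\tilde\gamma(0))$ converges to a limit independent of the chosen calibrated curve by the Poincar\'e argument for lifts of degree-one monotone maps.

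For the estimate (\ref{eq:rot-num-app}), let $y=\dot x-c$ along a globally calibrated orbit. Then (\ref{eq:ode0}) reads $(e^{F(t)}y)'=-e^{F(t)}V_x(x,t)$, and since $\wt\cA(c)$ is compact while $[f]>0$ forces $F(t)\to-\infty$ as $t\to-\infty$, variation of constants together with absolute convergence of $\int_{-\infty}^t e^{F(s)}\,ds$ yield
\[
y(t)=-e^{-F(t)}\int_{-\infty}^t e^{F(s)}V_x(x(s),s)\,ds,
\]
hence $|y(t)|\leq\varsigma([f])\|V\|_{C^1}$ with $\varsigma([f]):=\sup_t\int_{-\infty}^t e^{F(s)-F(t)}\,ds<\infty$. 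Averaging $y$ over long time intervals gives $|\rho(c)-c|\leq\varsigma([f])\|V\|_{C^1}$, so $\rho(c)\to\pm\infty$ as $c\to\pm\infty$. Continuity of $\rho(\cdot)$ will come from the upper semicontinuity of $\wt\cA(c)$ w.r.t.\ $L$ in Theorem \ref{thm:2}: any Hausdorff limit of $\wt\cA(c_n)$ is contained in $\wt\cA(c_\infty)$, the Lipschitz-graph property transfers to the limit, and every calibrated curve in $\wt\cA(c_\infty)$ arises as a locally uniform limit of calibrated curves for $c_n$. The intermediate value theorem then produces the desired $c_{p/q}$ for every irreducible $p/q\in\Q$.

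For the compact maximal global attractor, by Theorem \ref{thm:2} it suffices to bound the $p$-component of $\Omega$. For any $(x(t),p(t))\in\Omega$, integrating $\dot p=-V_x-f(t)p$ against $e^{F(t)}$ gives
\[
p(t)=e^{-F(t)}\Bigl(p(0)-\int_0^t e^{F(s)}V_x(x(s),s)\,ds\Bigr).
\]
Since the orbit persists in the invariant set $\Omega$ for all $t\in\R$, sending $t\to-\infty$ (where $e^{-F(t)}\to+\infty$) forces $p(0)=-\int_{-\infty}^0 e^{F(s)}V_x(x(s),s)\,ds$, hence $|p(0)|\leq\|V\|_{C^1}\int_{-\infty}^0 e^{F(s)}\,ds$. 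So $p$ is uniformly bounded on $\Omega$, whence $\Omega$ is compact and the attractor conclusion from Theorem \ref{thm:2} applies.

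The delicate step will be promoting upper semicontinuity of $\wt\cA(c)$ to full continuity of $\rho(c)$: a priori $\rho$ could jump if $\wt\cA(c_n)$ collapses to a proper subset of $\wt\cA(c_\infty)$. The argument must exploit that in dimension one the totally ordered family of calibrated curves is rigid enough that the rotation number is read off any single lift, and that this rigidity survives passage to the limit. The remaining ingredients (existence of $\rho(c)$, the $C^1$ bound, and compactness of $\Omega$) reduce to direct ODE estimates enabled by $[f]>0$.
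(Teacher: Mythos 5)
The proposal is correct and follows essentially the same strategy as the paper's Section~\ref{s5}: uniqueness of the rotation number via the one-dimensional Lipschitz-graph structure of $\wt\cA(c)$, continuity of $\rho$ via the upper semicontinuity of $\wt\cA(c)$ from Lemma~\ref{lem:semi-con}, the $p$-bound from integrating $\dot p=-V_x-f(t)p$ against $e^{F(t)}$, and compactness of $\Om$ via Theorem~\ref{thm:2}. The only cosmetic difference is that you integrate the momentum equation backward from $-\infty$ (exploiting invariance of the Aubry set/attractor to force the constant of integration to vanish), whereas the paper's Lemma~\ref{lem:om-high} integrates forward to $+\infty$ and reads off the bound from $e^{-F(t)}p_0\to 0$; both yield a constant $\varsigma$ depending only on $[f]$, and your version in fact matches the sign convention of~\eqref{eq:rot-num-app} more cleanly.
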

\vspace{10pt}

\noindent{\bf Organization of the article:} The paper is organized as follows: In Sec. \ref{sp}, we exhibit a list of physical models with time-periodic damping. For these models, we state some notable dynamic phenomena and show how these phenomena can be linked to our main conclusions. In Sec. \ref{s2}, we prove Theorem \ref{thm:1} and Theorem \ref{cor:1}. In Sec. \ref{s3}, we get an analogue Aubry Mather theory for systems satisfying {\bf (H0$^-$)} condition, and prove Theorem \ref{thm:2}. Besides, we also prove Theorem \ref{cor:critical} for systems satisfying {\bf (H0$^0$)} condition. In Sec. \ref{s4}, we discuss the parametrized viscosity solutions of (\ref{eq:hj-par}), and prove the convergence of them. In Sec. \ref{s5},  for  1-D mechanical systems with time periodic damping, we prove Theorem \ref{thm:4}, which is related to the dynamic phenomena of the models in Sec \ref{sp}. For the consistency of the proof, parts of preliminary conclusions are postponed to the Appendix.
\vspace{10pt}

\noindent{\bf Acknowledgements:} The first author is supported by Natural Scientific Foundation of China (Grant No.11501437). The second author is supported by National Natural Science Foundation of China (Grant No. 11631006, 11790272) and Shanghai Science and Technology Commission (Grant No. 17XD1400500). The third author is supported by the Natural Scientific Foundation of China (Grant No. 11901560). All the authors are grateful to Prof. Wei Cheng for helpful discussion about the details.

\section{Zoo of practical models}\label{sp}


In this section we display a bunch of physical models with time-periodic damping, and introduce some practical problems (related with our main conclusions) around them. 
\subsection{Conformally symplectic systems} For $f(t)\equiv \lb>0$ being constant, we get a so called {\sf conformally symplectic system (or discount system)}. The associated ODE  becomes
\be
 \left\{
\begin{aligned}
\dot x&=\partial_p H(x,p,t),\\
\dot p&=-\partial_x H(x,p,t)-\lb p.
\end{aligned}
\right.
\ee
This kind of systems has been considered in \cite{CCD,DFIZ,MS}, although earlier results on Aubry-Mather sets have been discussed by Le Calvez \cite{LC} and Casdagli \cite{Ca} for $M=\T$. Besides, we need to specify that the Duffing equation with {\sf viscous damping} also conforms to this case, which concerns all kinds of oscillations widely found in electromagnetics \cite{M} and elastomechanics \cite{MH}.

A significant property this kind of systems possess is that 
\[ 
(\varphi_H^1)^*dp\wedge d x= e^{\lb }dp\wedge d x.
\]
When $H(x,p,t)$ is mechanical, the equation usually describes the low velocity oscillation of a solid in a fluid medium (see Fig. \ref{fig1}), which can be formally expressed as
\be
\ddot{x}+\lb\dot x+\partial_x V(x,t)=0, \quad x\in\T,\;\lb>0.
\ee
Chaos and bifurcations topics of this setting has ever been rather popular in 1970s \cite{H}.

\begin{figure}
\begin{center}
\includegraphics[width=9cm]{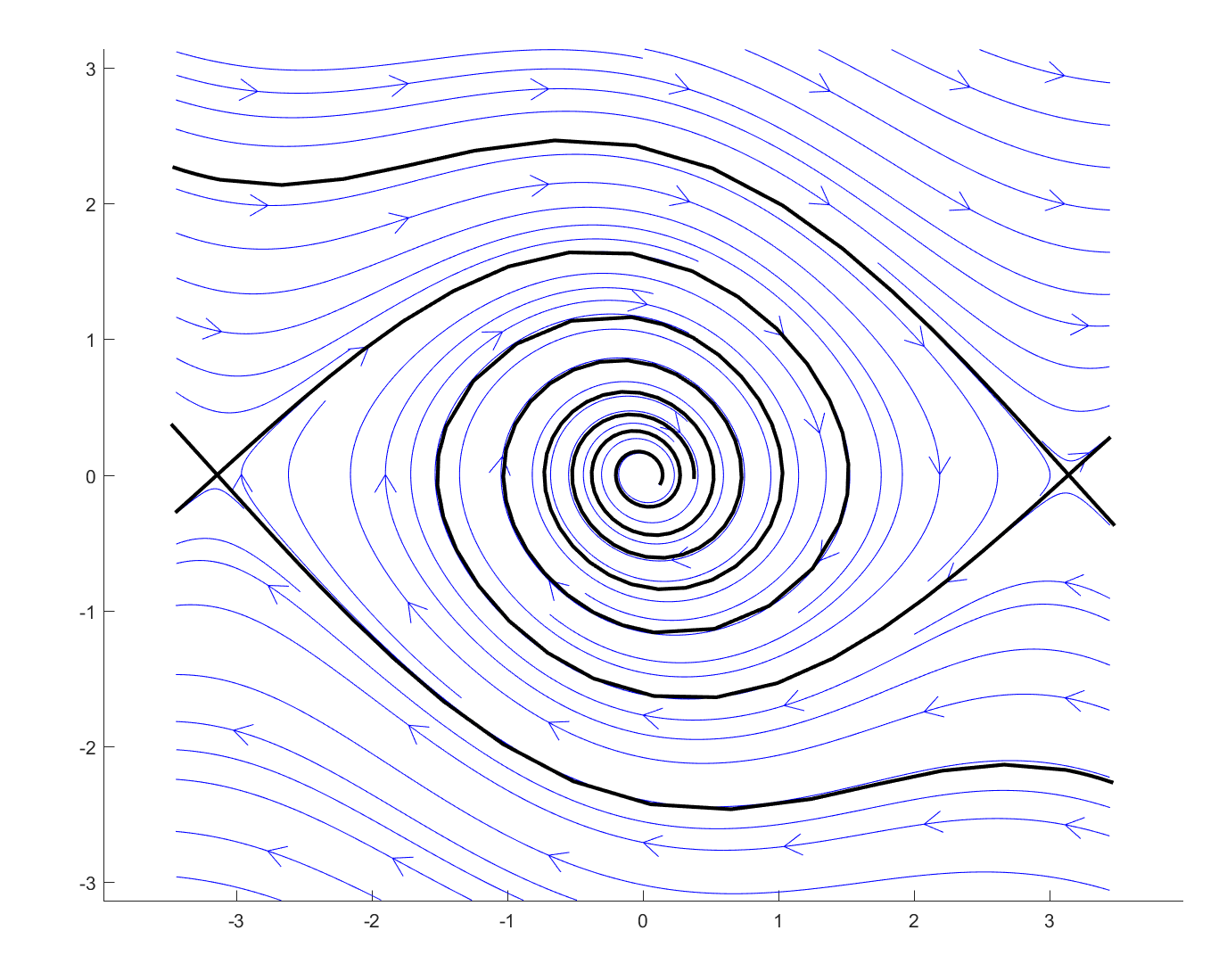}
\caption{A dissipative pendulum with $\lb=1/5$ and $V(t,x)=1-\cos x$.}
\label{fig1}
\end{center}
\end{figure}

\subsection{Tidal torque model} The {\sf tidal torque model} was firstly introduced by \cite{P}, describing the motion of a rigid satellite $S$ under the gravitational influence of a point-mass planet $P$. Due to the internal non-rigidity of the body, a tidal torque will causes a time-periodic dissipative to the motion of $S$, which can be formalized by 
\be\label{eq:tidal}
\ddot x +\eps V_x(x,e,t)+\kappa\eta(e,t)(\dot x-c(e))=0,\quad (x,t)\in\T^2, 
\ee
with the parameter $e$ is the {\sf eccentricity} of the elliptic motion $S$ around $P$. Due to the astronomical observation, $\eps$ is the {\sf equatorial ellipticity} of the satellite and 
\[
\kappa\propto \frac 1 {a^3}\cdot\frac{m_P}{m_S}, 
\]
with $a$ being the {\sf semi-major} and $m_P$ (resp. $m_S$) being the mass respectively.\\

\begin{figure}
\begin{center}
\includegraphics[width=12cm]{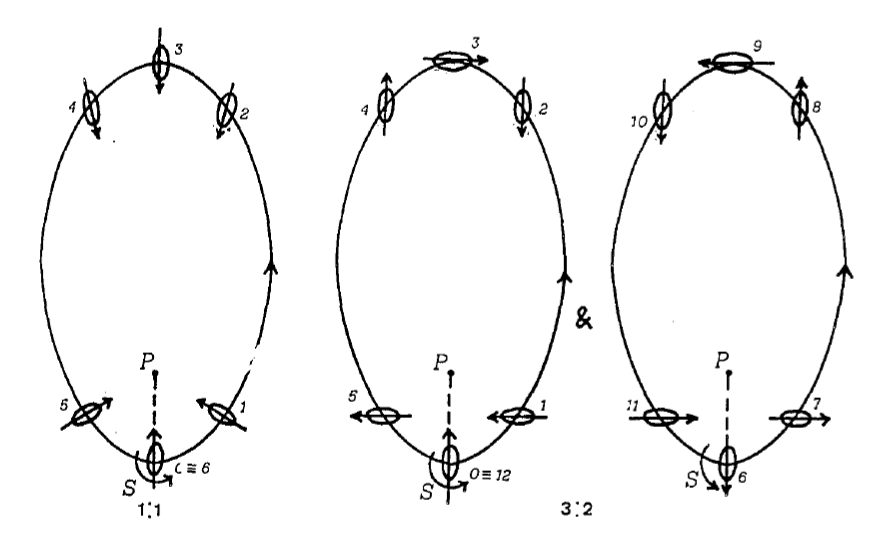}
\caption{A tidal torque model for Moon-Earth and Mercury-Sun.}
\label{fig2}
\end{center}
\end{figure}

Although this model might seem very special, there are several examples in the solar system for which such a model yields a good description of the motion, at least in a first approximation, and anyhow represents a first step toward the understanding of the problem. For instance, in the pairs 
Moon-Earth, Enceladus-Saturn, Dione-Saturn, Rhea-Saturn even Mercury-Sun this model is available. Besides, we need to specify that usually $\kappa\ll \eps$ in all these occasions.\medskip

A few interesting phenomena has been explained by numerical approaches, e.g. the $1:1$ resonance for Moon-Earth system which make the people can only see one side of the Moon from the Earth. However, the Mercury-Sun model shows a different $3:2$ resonance because of the large eccentricity, see Fig. \ref{fig2}.\medskip

Due to Theorem \ref{thm:2} and Theorem \ref{thm:3}, such a resonance seems to be explained by the following aspect: {\bf any trajectory within the global attractor $\Om$ of (\ref{eq:tidal}) has a longtime stability of velocity, namely, the average velocity is close to certain rotation number, or even asymptotic to it.} In Sec. \ref{s5} we will show that variational minimal trajectories indeed match this description.

\begin{rmk}
As a further simplification, a {\sf spin-orbit model} with $\eta(e)$ being a constant is also widely concerned, which is actually a conformally symplectic system. In \cite{CCD} they further discussed the existence of KAM torus for this model and proved the local attraction of the KAM torus.
\end{rmk}

\subsection{Pumping of the swing} The pumping of a swing is usually modeled as a rigid object forced to rotate back and forth at the lower ends of supporting ropes. After a series of approximations and reasonable simplifications, the pumping of the swing can be characterized as a harmonic oscillator with driving and parametric terms \cite{Ca2}. Therefore, this model has a typical meaning in understanding the dynamics of motors.\medskip

As shown in Fig. \ref{fig3}, the length of the ropes supporting the swinger is $l$, and $s$ is the distance between the center of mass of the swinger to the lower ends of the rope. The angle of the supporting rope to the vertical position is denoted by $\phi$, and the angle between the symmetric axis of the swinger and the rope is $\theta$, which varies as $\theta=\theta_0\cos\om t$. So we get the equation of the motion by 
\be\label{eq:swing}
(l^2-2ls\cos\theta+s^2+R^2)\ddot\phi&=&-gl\sin\phi+gs\sin(\phi+\theta)-ls\sin\theta\dot\theta^2\\
& &+(ls\cos\theta-s^2-R^2)\ddot\theta-2ls\sin\theta\dot\theta\dot\phi,\quad \phi\in\T\nonumber
\ee
where $g$ is the gravity index and $mR^2$ is the moment of inertia of the center ($m$ is the mass of swinger). {\bf We can see that by reasonable adjustment of $l,s,\om$ parameters, this system can be  dissipative, accelerative or critical.}\medskip

Notice that numerical research of this equation for $|\phi|\ll1$ has been done by numerical experts in a bunch of papers, see \cite{PGDB} for a survey of that. Those results successfully simulate the swinging at small to modest amplitudes. As the amplitude grows these results become less and less accurate, and that's why we resort to a theoretical analysis in this paper.

\begin{figure}
\begin{center}
\includegraphics[width=2in]{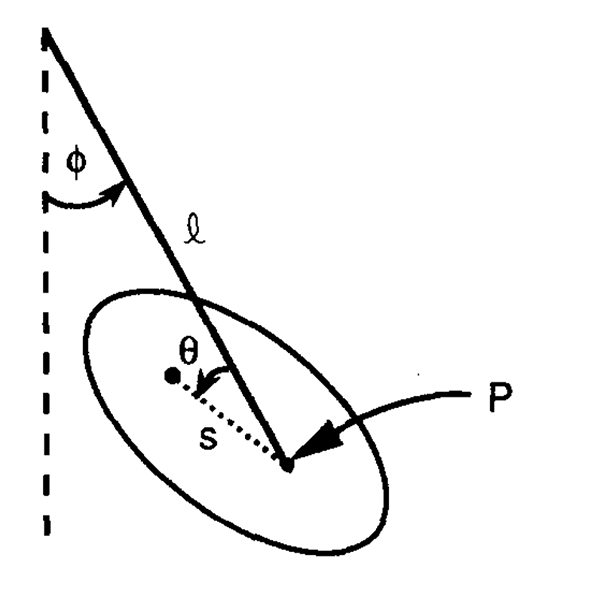}
\caption{A simulation of the pumping of the swing}
\label{fig3}
\end{center}
\end{figure}

\section{Weak KAM solution of (\ref{eq:sta-hj})}\label{s2}


Due to the superlinearity of $L(x,v,t)$, 
for each $k\geq 0$, there exists $C(k)\geq 0$, such that 
$$
L(x,v,t)\geq k|v|-C(k), k>0,x\in M.
$$
Moreover, the compactness of $M$ implies that for each $k>0$, there exists $C_k>0$ such that 
$$
\max_{\substack{(x,t)\in M\times\mathbb{T}\\|v|\leq k}}L(x,v,t)\leq C_k.
$$

\subsection{Weak KAM solution of (\ref{eq:sta-hj}) in the condition \textbf{(H0$^-$)}}

Note that $[f]>0$.  The following conclusion can be easily checked.
\begin{lem}\label{Sec3:inequivality}
Assume $t>s$, then
\begin{enumerate}
\item $F(s)-F(t)\leq 2k_0-(t-s-1)[f];$
	\item
	$
	\int^t_se^{F(\tau)-F(t)}\mbox{d}\tau\leq \frac{e^{2k_0+[f]}}{[f]}\big(1-e^{-(t-s)[f]}\big);
	$
	\item $
\int^t_{-\infty}e^{F(\tau)-F(t)}\mbox{d}\tau\leq \frac{e^{2k_0+[f]}}{[f]},
$
\end{enumerate}
where $k_0
=\max_{s\in[0,2]}\big|\int^s_0f(\tau)\mbox{d}\tau\big|
$.
\end{lem}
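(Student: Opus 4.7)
The plan is to exploit the 1-periodicity of $f$, which yields $F(t+1)=F(t)+[f]$, so that $F$ on any interval splits into an integer number of full-period contributions (each contributing exactly $[f]$) plus a bounded remainder controlled by $k_0$. All three statements will reduce to the first one, with (2) and (3) being routine exponential estimates once (1) is in hand.

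For item (1), I would decompose $t-s=n+r$ with $n:=\lfloor t-s\rfloor\in\mathbb{Z}_{\geq 0}$ and $r\in[0,1)$, so in particular $n\geq t-s-1$. Periodicity gives $\int_s^{s+n}f(\tau)\,d\tau=n[f]$ directly. For the leftover $\int_{s+n}^{t}f(\tau)\,d\tau$, I would translate by the integer $-(\lfloor s\rfloor+n)$ so as to rewrite it as $F(\{s\}+r)-F(\{s\})$, with both arguments lying in $[0,2]$. By the very definition of $k_0$, each value $|F(\cdot)|$ on $[0,2]$ is at most $k_0$, so this leftover is bounded in absolute value by $2k_0$. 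Combining the two pieces, flipping signs, and using $[f]>0$ gives $F(s)-F(t)\leq 2k_0-n[f]\leq 2k_0-(t-s-1)[f]$.

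For item (2), I would insert the pointwise form of (1), namely $F(\tau)-F(t)\leq 2k_0+[f]-(t-\tau)[f]$, into the integrand to obtain $e^{F(\tau)-F(t)}\leq e^{2k_0+[f]}\,e^{-(t-\tau)[f]}$. Integrating this elementary exponential from $s$ to $t$ produces exactly the claimed factor $(1-e^{-(t-s)[f]})/[f]$. For item (3), I would then pass to the limit $s\to-\infty$ in (2); since $[f]>0$, the factor $e^{-(t-s)[f]}$ vanishes, leaving the uniform bound $e^{2k_0+[f]}/[f]$.

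The only step with any substance is the periodic bookkeeping in (1), i.e.\ verifying that the fractional-period integral of $f$ is uniformly controlled by $2k_0$ through an integer translation. Once (1) is secured, items (2) and (3) are one-line applications of the monotone convergence of an integrable exponential tail, and no further input from the structure of $f$ beyond periodicity and $[f]>0$ is required.
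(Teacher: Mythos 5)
Your proposal is correct. The paper states this lemma without proof (``can be easily checked''), and your argument is the natural one implicit in that remark: decompose $t-s$ into $\lfloor t-s\rfloor$ full periods, each contributing exactly $[f]$ to $F(t)-F(s)$, and bound the fractional remainder by $2k_0$ via an integer translation into $[0,2]$, giving (1); then (2) follows by inserting the pointwise bound $e^{F(\tau)-F(t)}\leq e^{2k_0+[f]}e^{-(t-\tau)[f]}$ and integrating the resulting exponential, and (3) by letting $s\to-\infty$ using $[f]>0$. One tiny point worth keeping explicit: the translated endpoints $\{s\}$ and $\{s\}+r$ indeed both lie in $[0,2)$ since $\{s\}\in[0,1)$ and $r\in[0,1)$, which is exactly where the definition of $k_0$ is needed.
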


Now we define a function $u_{\alpha}^-:M\times\mathbb{R}\to\mathbb{R}$ by
\be\label{Sec3:solution}
	u_\alpha^-(x,t)&:=&\inf\int^t_{-\infty}e^{F(s)-F(t)}(L(\gamma(s),\dot{\gamma}(s),s)+\alpha)\mbox{d}s
\ee
where the infimum is taken for all $\gamma\in C^{ac}((-\infty,t],M)$\footnote{aboslutely continuous curves} with $\gamma(t)=x$.
We can easily prove this function is bounded, since 
$$
-|C(k=0)-\alpha|\cdot\frac{e^{2k_0+[f]}}{[f]}\leq u_{\alpha}^-(x,t)\leq  |C_{k=0}+\alpha|\frac{e^{2k_0+[f]}}{[f]},
$$
where $C(0)$ and $C_0$ have been defined in the beginning of Sec. \ref{s2}. 

\begin{lem}{\bf [(1) of Theorem \ref{thm:1}]}\label{lem:per}
	$u_{\alpha}^-(x,t)$ is 1-periodic with respect to $t$, i.e.,
	$$
	u_{\alpha}^-(x,t+1)=u_{\alpha}^-(x,t).
	$$
\end{lem}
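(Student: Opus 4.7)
The plan is a direct change of variables in the defining integral, exploiting the $1$-periodicity of both $f$ (hence of the increments of $F$) and $L$ (inherited from $H$).

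First I would fix an admissible curve $\gamma\in C^{ac}((-\infty,t+1],M)$ with $\gamma(t+1)=x$ and set $\tilde\gamma(\tau):=\gamma(\tau+1)$, which is admissible for $u_\alpha^-(x,t)$ since $\tilde\gamma(t)=x$ and absolute continuity is preserved. Substituting $s=\tau+1$ in the integral for $u_\alpha^-(x,t+1)$ gives
\[
\int_{-\infty}^{t+1}e^{F(s)-F(t+1)}\bigl(L(\gamma(s),\dot\gamma(s),s)+\alpha\bigr)\,ds
=\int_{-\infty}^{t}e^{F(\tau+1)-F(t+1)}\bigl(L(\tilde\gamma(\tau),\dot{\tilde\gamma}(\tau),\tau+1)+\alpha\bigr)\,d\tau.
\]
The key observation is that $f$ is $1$-periodic, so $F(\tau+1)-F(\tau)=[f]$ for all $\tau$, whence $F(\tau+1)-F(t+1)=F(\tau)-F(t)$. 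Combined with the $1$-periodicity of $L$ in its time variable (which comes from \textbf{(H1)} together with $H$ being defined on $TM\times\T$), the right-hand side becomes
\[
\int_{-\infty}^{t}e^{F(\tau)-F(t)}\bigl(L(\tilde\gamma(\tau),\dot{\tilde\gamma}(\tau),\tau)+\alpha\bigr)\,d\tau.
\]

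The map $\gamma\mapsto\tilde\gamma$ is a bijection between the admissible class for $u_\alpha^-(x,t+1)$ and that for $u_\alpha^-(x,t)$ (its inverse is the backward shift by $1$), so taking infima on both sides yields $u_\alpha^-(x,t+1)=u_\alpha^-(x,t)$. The only subtle point worth a brief remark is that the improper integral is absolutely convergent along any curve of finite action, which follows from the bounds already recorded (the superlinear lower bound $L\geq k|v|-C(k)$ and Lemma \ref{Sec3:inequivality}(3)), so the substitution is justified and both infima are finite. There is no genuine obstacle here: once the bookkeeping identity $F(\tau+1)-F(t+1)=F(\tau)-F(t)$ is noted, the argument is essentially automatic.
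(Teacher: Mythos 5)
Your argument is correct and is essentially the same change-of-variables proof the paper gives: both shift the time variable by $1$, use $F(\tau+1)-F(t+1)=F(\tau)-F(t)$ (from the $1$-periodicity of $f$) together with the $1$-periodicity of $L$, and note that the time-shift is a bijection on the admissible curves. Your additional remark about absolute convergence of the improper integral is a harmless extra precaution but not a new idea.
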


\proof
By the definition of $u_{\alpha}^-$,
\ben
& &u_{\alpha}^-(x,t+1)\\
&=&\inf_{\gamma(t+1)=x}\bigg\{\int^{t+1}_{-\infty}e^{F(s)-F(t+1)}\big(L(\gamma(s),\dot{\gamma}(s),s)+\alpha\big)\mbox{d}s\bigg\}\\
&= &\inf_{\gamma(t+1)=x}\bigg\{\int^{t}_{-\infty}e^{F(s+1)-F(t+1)}\big(L(\gamma(s+1),\dot{\gamma}(s+1),s+1)+\alpha\big)\mbox{d}s\bigg\}\\
&=&\inf_{\eta(t)=x}\bigg\{\int^{t}_{-\infty}e^{F(s)-F(t)}\big(L(\eta(s),\dot{\eta}(s),s)+\alpha\big)\mbox{d}s\bigg\}\\
&=&u_{\alpha}^-(x,t)
\een
as desired.\qed
\endproof

\begin{lem}{\bf [(3) of Theorem \ref{thm:1}]}\label{Sec3:dominant}
Let $\gamma:[s_1,s_2]\to M$ be an absolutely continuous curve. Then,
\be\label{eq:dominant}
	& &e^{F(s_2)}u_{\alpha}^-(\gamma(s_2),s_2)-e^{F(s_1)}u_{\alpha}^-(\gamma(s_1),s_1)\\
	&\leq&
\int^{s_2}_{s_1}e^{F(\tau)}(L(\gamma(\tau),\dot{\gamma}(\tau),\tau)+\alpha)\mbox{d}\tau.\nonumber
\ee

\end{lem}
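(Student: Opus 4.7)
The plan is to run a standard variational concatenation argument: build a comparison curve on $(-\infty, s_2]$ ending at $\gamma(s_2)$ by gluing a near-minimizer for $u_\alpha^-(\gamma(s_1), s_1)$ to the given curve $\gamma$, then plug it into the infimum defining $u_\alpha^-(\gamma(s_2), s_2)$.

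More precisely, fix $\varepsilon > 0$. Since $u_\alpha^-(\gamma(s_1), s_1)$ is defined as an infimum and (as already noted) it is finite, there exists an absolutely continuous curve $\eta:(-\infty, s_1]\to M$ with $\eta(s_1)=\gamma(s_1)$ such that
\[
\int_{-\infty}^{s_1} e^{F(\tau)-F(s_1)}\bigl(L(\eta(\tau),\dot\eta(\tau),\tau)+\alpha\bigr)\,d\tau \;\leq\; u_\alpha^-(\gamma(s_1),s_1)+\varepsilon.
\]
Now define the concatenated curve $\tilde\gamma:(-\infty,s_2]\to M$ by $\tilde\gamma|_{(-\infty,s_1]}=\eta$ and $\tilde\gamma|_{[s_1,s_2]}=\gamma$. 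It is absolutely continuous because both pieces are and they match at $s_1$, and it ends at $\tilde\gamma(s_2)=\gamma(s_2)$, so it is admissible in the infimum defining $u_\alpha^-(\gamma(s_2), s_2)$.

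Applying the definition \eqref{Sec3:solution} of $u_\alpha^-$ and splitting the integral at $s_1$ yields
\[
u_\alpha^-(\gamma(s_2),s_2) \leq \int_{-\infty}^{s_1} e^{F(\tau)-F(s_2)}(L(\eta,\dot\eta,\tau)+\alpha)\,d\tau + \int_{s_1}^{s_2} e^{F(\tau)-F(s_2)}(L(\gamma,\dot\gamma,\tau)+\alpha)\,d\tau.
\]
In the first integral, factor out $e^{F(s_1)-F(s_2)}$ and use the near-minimality of $\eta$; multiply through by $e^{F(s_2)}$ to get
\[
e^{F(s_2)}u_\alpha^-(\gamma(s_2),s_2) \leq e^{F(s_1)}\bigl(u_\alpha^-(\gamma(s_1),s_1)+\varepsilon\bigr) + \int_{s_1}^{s_2} e^{F(\tau)}(L(\gamma,\dot\gamma,\tau)+\alpha)\,d\tau.
\]
Letting $\varepsilon\to 0$ gives \eqref{eq:dominant}.

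I do not expect any real obstacle here — the only points requiring minor care are (i) the existence of a genuine near-minimizer on a half-infinite interval, which follows from the a priori bounds on $u_\alpha^-$ established right after \eqref{Sec3:solution}, and (ii) that the concatenation $\tilde\gamma$ is still admissible for the infimum defining $u_\alpha^-(\gamma(s_2), s_2)$, which is immediate since absolute continuity is preserved under gluing at a common endpoint and the endpoint condition $\tilde\gamma(s_2)=\gamma(s_2)$ is satisfied by construction.
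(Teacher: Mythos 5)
Your proof is correct and follows essentially the same route as the paper: both glue a near-optimal tail for $u_\alpha^-(\gamma(s_1),s_1)$ onto $\gamma$, feed the concatenation into the infimum defining $u_\alpha^-(\gamma(s_2),s_2)$, and split the weighted integral at $s_1$. The only cosmetic difference is that you use an $\varepsilon$-near-minimizer where the paper uses a minimizing sequence $\{\gamma_n\}$; these are interchangeable.
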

\proof
Let $\{\gamma_n\}$ be a sequence of absolutely continuous curve from $(-\infty,s_1]$ to $M$ with $\gamma_n(s_1)=\gamma(s_1)$, such that
\[
e^{F(s_1)}u_{\alpha}^-(\gamma(s_1),s_1)=\lim_{n\to \infty}\int_{-\infty}^{s_1}e^{F(\tau)}(L(\gamma_n(\tau),\dot{\gamma}_n(\tau),\tau)+\alpha)\mbox{d}\tau.
\]
Let $\hat{\gamma}_n=\gamma_n*\gamma$ for each $n\in\mathbb{N}$. Hence,
\ben
	e^{F(s_2)}u_{\alpha}^-(\gamma(s_2),s_2)&\leq &\int^{s_2}_{-\infty}e^{F(\tau)}(L(\hat{\gamma}_n(\tau),\dot{\hat{\gamma}}_n(\tau),\tau)+\alpha)\mbox{d}\tau\\
	&\leq& \int^{s_2}_{s_1}e^{F(\tau)}(L(\gamma(\tau),\dot{\gamma}(\tau),\tau)+\alpha)\mbox{d}\tau\\
	& &+\int^{s_1}_{-\infty}e^{F(\tau)}(L(\gamma_n(\tau),\dot{\gamma}_n(\tau),\tau)+\alpha)\mbox{d}\tau.
\een
Taking the limit $n\to\infty$, we derive (\ref{eq:dominant}) is true.\qed
\endproof

\begin{lem}\label{Sec3:minimizer}
For each $(x,t)\in M\times\mathbb{R}$ and $s<t$, it holds
\be\label{eq:dominant_min}
& &e^{F(t)}u_{\alpha}^-(x,t)\\
&=&\inf_{\substack{\gamma\in C^{ac}([s,t],M)\\ \gamma(t)=x }}\bigg\{ e^{F(s)}u_{\alpha}^-(\gamma(s),s)+\int^t_se^{F(\tau)}(L(\gamma(\tau),\dot{\gamma}(\tau),\tau)+\alpha)\mbox{d}\tau \bigg\}.\nonumber
\ee
Moreover, the infimum in (\ref{eq:dominant_min}) can be achieved by a $C^r$ smooth minimizer.
\end{lem}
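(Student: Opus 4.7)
The plan is to prove the identity by establishing the two inequalities separately, then handle the achievement of the infimum by a Tonelli-type direct method argument. Both inequalities follow quickly from tools already in place: the domination inequality of Lemma \ref{Sec3:dominant} handles one direction, and the definition of $u_\alpha^-$ as an infimum over semi-infinite curves handles the other.

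For the $\leq$ direction, I would take any $\gamma\in C^{ac}([s,t],M)$ with $\gamma(t)=x$ and apply Lemma \ref{Sec3:dominant} on $[s,t]$. This gives
\[
e^{F(t)}u_\alpha^-(x,t)\leq e^{F(s)}u_\alpha^-(\gamma(s),s)+\int_s^t e^{F(\tau)}\bigl(L(\gamma,\dot\gamma,\tau)+\alpha\bigr)d\tau,
\]
and taking the infimum over all such $\gamma$ gives half of the claim.

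For the $\geq$ direction, I would fix $\eps>0$ and choose an $\eps$-minimizer $\xi\in C^{ac}((-\infty,t],M)$ with $\xi(t)=x$ for the definition \eqref{Sec3:solution}, so that
\[
\int_{-\infty}^{t}e^{F(\tau)}\bigl(L(\xi,\dot\xi,\tau)+\alpha\bigr)d\tau\leq e^{F(t)}u_\alpha^-(x,t)+\eps.
\]
Splitting the integral at $s$, the tail on $(-\infty,s]$ is an admissible competitor in the definition of $u_\alpha^-(\xi(s),s)$, hence bounds $e^{F(s)}u_\alpha^-(\xi(s),s)$ from above. Since the restriction of $\xi$ to $[s,t]$ is a candidate in the infimum on the right-hand side of \eqref{eq:dominant_min}, we get
\[
e^{F(t)}u_\alpha^-(x,t)+\eps\geq e^{F(s)}u_\alpha^-(\xi(s),s)+\int_s^t e^{F(\tau)}\bigl(L+\alpha\bigr)d\tau\geq \inf_{\gamma}\{\cdots\},
\]
and letting $\eps\to 0$ closes the argument.

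The main technical point, and the one I expect to require the most care, is achievement of the infimum. The functional
\[
J[\gamma]:=e^{F(s)}u_\alpha^-(\gamma(s),s)+\int_s^t e^{F(\tau)}\bigl(L(\gamma,\dot\gamma,\tau)+\alpha\bigr)d\tau
\]
is not of the purely fixed-endpoint type because of the free endpoint $\gamma(s)$ and the boundary term $e^{F(s)}u_\alpha^-(\gamma(s),s)$. My plan is to first minimize $y\mapsto e^{F(s)}u_\alpha^-(y,s)+h_\alpha^{s,t}(y,x)$ over $y\in M$. The boundedness of $u_\alpha^-$ already established, together with the continuity of $h_\alpha^{s,t}(\cdot,x)$ (which follows from the Weierstrass theorem applied to the fixed-endpoint problem, cf. Theorem 3.7.1 of \cite{Fa}) and compactness of $M$, will produce a minimizing $y^*\in M$. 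Once $y^*$ is fixed, the fixed-endpoint variational problem is a classical Tonelli problem under \textbf{(H1--H4)}; invoking the Weierstrass existence theorem and the standard regularity theory yields an Euler--Lagrange minimizer, which is $C^r$ smooth on $[s,t]$ and therefore satisfies \eqref{eq:e-l}. Gluing this $C^r$ minimizer with the optimal boundary value at $y^*$ delivers the minimizer claimed in the lemma.
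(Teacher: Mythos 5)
Your two inequalities coincide with the paper's argument: the $\leq$ direction is an immediate consequence of the domination Lemma~\ref{Sec3:dominant}, and the $\geq$ direction is obtained by splitting an $\eps$-minimizer of \eqref{Sec3:solution} at time $s$, exactly as the paper does. Where you diverge is in the achievement of the infimum. The paper applies the direct method in one shot to the free-endpoint functional: it takes a minimizing sequence $\gamma_n$ with $\gamma_n(t)=x$, derives a uniform action bound, extracts a subsequence via the Dunford--Pettis theorem, and uses sequential weak lower semicontinuity of the weighted action; the limit is then a Tonelli minimizer on $[s,t]$, hence $C^r$ by the Weierstrass theorem. You instead factor the minimization as an inner fixed-endpoint Tonelli problem and an outer minimization over the free left endpoint $y=\gamma(s)$. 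That decomposition is valid in principle, and arguably more transparent since it isolates the role of the free boundary.

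However, the way you justify the outer minimization contains a genuine gap. You write that boundedness of $u_\alpha^-$, continuity of $h_\alpha^{s,t}(\cdot,x)$, and compactness of $M$ produce a minimizing $y^*$. Boundedness of $u_\alpha^-(\cdot,s)$ is not enough for a minimum of $y\mapsto e^{F(s)}u_\alpha^-(y,s)+h_\alpha^{s,t}(y,x)$ to be attained: one needs $u_\alpha^-(\cdot,s)$ to be (at least) lower semicontinuous, and at this stage of the paper that has not been established. Indeed, the Lipschitz regularity of $u_\alpha^-$ is proved only later in Lemma~\ref{lem:lip-dis}, whose proof routes through Lemma~\ref{Sec3:calibrated} and Lemma~\ref{Sec3:calibrated dot bounded}, both of which depend on the present Lemma~\ref{Sec3:minimizer}; so invoking continuity of $u_\alpha^-$ here would introduce a circularity. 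To be fair, the paper's Dunford--Pettis argument also tacitly uses lower semicontinuity of $u_\alpha^-(\cdot,s)$ when passing to the limit in the boundary term $e^{F(s)}u_\alpha^-(\gamma_{n_k}(s),s)$, so the omission is shared, but you explicitly present boundedness as the operative property when it is not. To repair your argument you would need to establish lower semicontinuity (or a direct Lipschitz estimate) of $u_\alpha^-(\cdot,s)$ before this lemma, which requires a separate short argument from the domination inequality together with a velocity bound along near-minimizers.
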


\proof 
Due to Lemma \ref{Sec3:dominant}, 
$$
e^{F(t)}u_{\alpha}^-(x,t)\leq\inf_{\substack{\gamma\in C^{ac}([s,t],M)\\ \gamma(t)=x
}} \big\{e^{F(s)}u_{\alpha}^-(\gamma(s),s)+\int^t_se^{F(\tau)}(L(\gamma,\dot{\gamma},\tau)+\alpha)\mbox{d}\tau\big\}.
$$
For each $\epsilon>0$, there exists an absolutely continuous curve $\gamma:(-\infty,t]\to M$ with $\gamma(t)=x$, such that
\ben
e^{F(t)}u_{\alpha}^-(x,t)+\epsilon&\geq& \int^t_{-\infty}e^{F(\tau)}(L(\gamma(\tau),\dot{\gamma}(\tau),\tau)+\alpha)\mbox{d}\tau	\\
&=&\int^t_se^{F(\tau)}(L(\gamma(\tau),\dot{\gamma}(\tau),\tau)+\alpha)\mbox{d}\tau\\
& &+\int^s_{-\infty}e^{F(\tau)}(L(\gamma(\tau),\dot{\gamma}(\tau),\tau)+\alpha)\mbox{d}\tau\\
&\geq& e^{F(s)}u_{\alpha}^-(\gamma(s),s)+\int^t_se^{F(\tau)}(L(\gamma(\tau),\dot{\gamma}(\tau),\tau)+\alpha)\mbox{d}\tau.
\een
Hence, (\ref{eq:dominant_min}) proves to be an equality. Therefore, we can find a sequence of absolutely continuous curve $\{\gamma_n\}$ with  $\gamma_n(t)=x$ such that  
$$
e^{F(t)}u_{\alpha}^-(x,t)=\lim_{n\to\infty}\bigg\{e^{F(s)}u_{\alpha}^-(\gamma_n(s),s)+\int^t_se^{F(\tau)}(L(\gamma_n,\dot{\gamma}_n,\tau)+\alpha)\mbox{d}\tau\bigg\}.
$$
Hence, there exists a constant $c$ independent of $n$, such that 
\begin{equation}\label{eq_lem315_bounded}
	\int^t_se^{F(\tau)}(L(\gamma_n(\tau),\dot{\gamma}_n(\tau),\tau)+\alpha)\mbox{d}\tau\leq c.
\end{equation}
Due to Dunford-Petti Theorem (Theorem 6.4 in \cite{DFIZ}), there exists a subsequence $\{\gamma_{n_k}\}$ converging to a curve $\gamma_*$ 
such that
\begin{equation}\label{eq_lem315_dominiant}
	\int^t_se^{F(\tau)}(L(\gamma_*,\dot{\gamma}_*,\tau)+\alpha)\mbox{d}\tau\leq\varliminf_{k\to\infty}\int^t_se^{F(\tau)}(L(\gamma_{n_k},\dot{\gamma}_{n_k},\tau)+\alpha)\mbox{d}\tau.
\end{equation}
Hence, (\ref{eq:dominant_min}) can be achieved at $\gamma_*: [s,t]\to M$, which definitely solves the Euler-Lagrange equation (\ref{eq:e-l}). Due to the { Weierstrass Theorem} in \cite{Mat}, $\gamma_*$ is $C^r$ smooth.\qed
  \endproof

\begin{lem}{\bf [(4) of Theorem \ref{thm:1}]}\label{Sec3:calibrated}
For each $\alpha\in\mathbb{R}$ and $(x,t)\in M\times\mathbb{R}$, there exists a curve $\gamma_{x,t}^-:(-\infty,t]\to M$ with $\gamma_{x,t}^-(t)=x$ such that for each $t_1<t_2\leq t$,
\be\label{eq:calibrated_0}
	& &e^{F(t_2)}u_{\alpha}^-(\gamma_{x,t}^-(t_2),t_2)-e^{F(t_1)}u_{\alpha}^-(\gamma_{x,t}^-(t_1),t_1)\\
	&=&\int^{t_2}_{t_1}e^{F(\tau)}(L(\gamma_{x,t}^-(\tau),\dot{\gamma}_{x,t}^-(\tau),\tau)+\alpha)\mbox{d}\tau.\nonumber
\ee
\end{lem}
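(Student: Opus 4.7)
The plan is to build $\gamma_{x,t}^-$ as a locally uniform limit of finite-horizon minimizers and then transport the calibration identity from each finite horizon to the whole half-line $(-\infty,t]$.

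First, for each $n \in \mathbb{N}$ I apply Lemma \ref{Sec3:minimizer} with $s=t-n$ to obtain a $C^r$-smooth minimizer $\gamma_n:[t-n,t]\to M$ with $\gamma_n(t)=x$ realizing
\[
e^{F(t)}u_{\alpha}^-(x,t)=e^{F(t-n)}u_{\alpha}^-(\gamma_n(t-n),t-n)+\int_{t-n}^t e^{F(\tau)}\big(L(\gamma_n,\dot\gamma_n,\tau)+\alpha\big)\mbox{d}\tau.
\]
Next I upgrade this global equality to a calibration on every subinterval $[t_1,t_2]\subseteq[t-n,t]$: splitting the integral at $t_1$ and $t_2$ and applying Lemma \ref{Sec3:dominant} on each of $[t-n,t_1]$, $[t_1,t_2]$ and $[t_2,t]$ produces three inequalities whose sum equals the above identity, forcing each to be tight; in particular $\gamma_n$ calibrates $u_{\alpha}^-$ on $[t_1,t_2]$.

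The main work is to produce an equi-Lipschitz estimate $|\dot\gamma_n(\tau)|\leq K$ uniformly in $n$ and $\tau\in[t-n,t]$. For any $\tau$ with $[\tau,\tau+1]\subset[t-n,t]$ I compare $\gamma_n|_{[\tau,\tau+1]}$ with the minimizing Riemannian geodesic $\eta$ from $\gamma_n(\tau)$ to $\gamma_n(\tau+1)$, whose speed is bounded by $\mathrm{diam}(M)$. Combining the calibration equality on $[\tau,\tau+1]$ with Lemma \ref{Sec3:dominant} applied to $\eta$, and using that $u_{\alpha}^-$ is bounded and $e^{F(\sigma)}$ is bounded above and below on any unit interval by constants depending only on $f$, I obtain $\int_\tau^{\tau+1}L(\gamma_n,\dot\gamma_n,\sigma)\mbox{d}\sigma\leq C$ uniformly. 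Since $\gamma_n$ solves the Euler--Lagrange equation \eqref{eq:e-l} on $[\tau,\tau+1]$, a standard a priori estimate for Tonelli extremals (where boundedness of the action over a unit interval is promoted to a pointwise velocity bound through superlinearity of $L$ and continuity of the Euler--Lagrange flow, guaranteed by \textbf{(H4)}) yields the desired uniform $L^\infty$ bound on $\dot\gamma_n$.

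Equipped with this bound, Arzel\`a--Ascoli together with a diagonal extraction supplies a subsequence $\gamma_{n_k}$ converging uniformly on compact subsets of $(-\infty,t]$ to a Lipschitz curve $\gamma_{x,t}^-$ with $\gamma_{x,t}^-(t)=x$, and $\dot\gamma_{n_k}\rightharpoonup\dot\gamma_{x,t}^-$ weakly-$*$ in $L^\infty$. For any fixed $t_1<t_2\leq t$, Lemma \ref{Sec3:dominant} already gives ``$\leq$'' in \eqref{eq:calibrated_0}; the reverse inequality follows by passing to the $\liminf$ in the $[t_1,t_2]$-calibration identity for $\gamma_{n_k}$, using continuity of $u_{\alpha}^-$ for the boundary terms and Tonelli's lower semicontinuity theorem (convexity of $L$ in $v$) for the action term. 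The smoothness of $\gamma_{x,t}^-$ then follows, as in Lemma \ref{Sec3:minimizer}, from its satisfying \eqref{eq:e-l}. The principal obstacle is the equi-Lipschitz step: under \textbf{(H0$^-$)} the weight $e^{F(\tau)}$ can grow exponentially as $\tau\to-\infty$, so one must localize the geodesic comparison to unit subintervals rather than to $[t-n,t]$ in order that all constants remain independent of $n$.
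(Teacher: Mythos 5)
Your proof follows the same scheme as the paper's: finite-horizon minimizers $\gamma_n$ produced by Lemma \ref{Sec3:minimizer}, upgrading the global identity to subinterval calibration via Lemma \ref{Sec3:dominant}, a diagonal/Arzel\`a--Ascoli extraction of a limit curve $\gamma_{x,t}^-$, and then passing to the limit in the calibration identity using lower semicontinuity of the action (the paper invokes Dunford--Pettis, you invoke Tonelli's theorem with a weak-$*$ $L^\infty$ bound --- both work here) combined with the domination inequality. The one place you are more explicit is the equi-Lipschitz bound on $\dot\gamma_n$, obtained by the unit-interval geodesic comparison; the paper's ``diagonal approach'' tacitly requires this equicontinuity and in effect records the same estimate afterward as Lemma \ref{Sec3:calibrated dot bounded}, so your version is a harmless (and arguably cleaner) reordering rather than a different method.
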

\proof
By Lemma \ref{Sec3:minimizer}, for each $n\in\mathbb{N}$, there exists a sequence of $C^r$ curve $\gamma_n:[t-n,t]\to M$ with $\gamma_n(t)=x$ such that
$$
e^{F(t)}u_{\alpha}^-(x,t)=e^{F(t-n)}u_{\alpha}^-(\gamma_n(t-n),t-n)+\int^t_{t-n}e^{F(\tau)}(L(\gamma_n(\tau),\dot{\gamma}_n(\tau),\tau)+\alpha)\mbox{d}\tau.
$$
It is easy to see for each interval $[a,b]\subset [t-n,t]$ 
\be\label{eq:ca}
	& &e^{F(b)}u_{\alpha}^-(\gamma_n(b),b)-e^{F(a)}u_{\alpha}^-(\gamma_n(a),a)\\
	&=&\int^b_ae^{F(\tau)}(L(\gamma_n(\tau),\dot{\gamma}_n(\tau),\tau)+\alpha)\mbox{d}\tau.\nonumber
\ee
Due to diagonal approach, we derive there exists a subsequence of $\{\gamma_n\}$, denoted by $\{\gamma_{n_k}\}$ and a  curve $\gamma_{x,t}^-:(-\infty,t]\to M$ such that 
$\gamma_{n_k}$ converges uniformly to $\gamma_{x,t}^-$ on each finite subinterval of $(-\infty,t]$.
Taking $k\to\infty$ in (\ref{eq:ca}), due to Dunford Petti Theorem,
\begin{align*}
	&e^{F(b)}u_{\alpha}^-(\gamma_{x,t}^-(b),b)-e^{F(a)}u_{\alpha}^-(\gamma_{x,t}^-(a),a)\\ 
	&=\varliminf_{k\to\infty}\int^b_ae^{F(\tau)}(L(\gamma_{n_k},\dot{\gamma}_{n_k},\tau)+\alpha)\mbox{d}\tau
	\geq \int^b_ae^{F(\tau)}(L(\gamma_{x,t}^-,\dot{\gamma}_{x,t}^-,\tau)+\alpha)\mbox{d}\tau.
\end{align*}
Combining with (\ref{eq:dominant}), we get (\ref{eq:calibrated_0}). Since $\gamma_{x,t}^-|_{[s,t]}$ is a minimizer of (\ref{eq:dominant_min}) for each $s<t$, due to Lemma \ref{Sec3:minimizer}, $\gamma_{x,t}^-$ is $C^r$ and solves (\ref{eq:e-l}).\qed
\endproof
\begin{rmk}
Due to Lemma \ref{Sec3:calibrated dot bounded}, if we take $t_2=t$ and make $t_1\rightarrow-\infty$ in (\ref{eq:calibrated_0}) we instantly get 
$$
u_{\alpha}^-(x,t)=\int^t_{-\infty}e^{F(\tau)-F(t)}(L(\gamma_{x,t}^-(\tau),\dot{\gamma}_{x,t}^-(\tau),\tau)+\alpha)\mbox{d}\tau,
$$
i.e. the infimum in (\ref{Sec3:solution}) is achieved at $\gamma_{x,t}^-:(-\infty,t]\rightarrow M$.
\end{rmk}

\begin{lem}\label{Sec3:calibrated dot bounded}
	Suppose $\gamma_{x,\theta}^-:(-\infty,\theta]\to M$ is a backward calibrated curve ending with $x$ of $u_{\alpha}^-(x,\theta)$, then
	 \[
	|\dot{\gamma}_{x,\theta}^-(\tau)|\leq \kappa_0,\quad\forall\  (x,\theta)\in M\times\T, \tau<\theta.
	\]
for a constant $\kappa_0$ depending on $L$ and $\alpha$. That implies $\gamma_{x,\theta}^-$ is actually Lipschitz on $(-\infty,\theta]$.
\end{lem}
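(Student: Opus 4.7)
The plan is to establish a uniform $L^1$-bound on $|\dot\gamma_{x,\theta}^-|$ over every unit subinterval of $(-\infty,\theta]$, locate a point where the velocity is pointwise small, and then propagate that pointwise bound forward and backward by time $1$ via the completeness of the Euler-Lagrange flow.

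First, I would observe that $\gamma_{x,\theta}^-$ is automatically a fixed-endpoint minimizer of the weighted action $\int e^{F(s)}(L+\alpha)\,ds$ on every $[a,b]\subseteq(-\infty,\theta]$: combining the calibration equality \eqref{eq:cal} for $\gamma_{x,\theta}^-$ with the domination inequality \eqref{eq:dom} applied to any competitor sharing the same endpoints $\gamma_{x,\theta}^-(a),\gamma_{x,\theta}^-(b)$ forces the reverse inequality.

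Fix $\tau_0$ with $\tau_0+1\leq\theta$, and let $\sigma:[\tau_0,\tau_0+1]\to M$ be a minimizing geodesic joining $\gamma_{x,\theta}^-(\tau_0)$ to $\gamma_{x,\theta}^-(\tau_0+1)$, parametrized with constant speed at most $\mathrm{diam}(M)$. The a priori estimate recalled at the start of this section yields $L(\sigma,\dot\sigma,s)\leq C_{\mathrm{diam}(M)}$, so fixed-endpoint minimization gives
\[
\int_{\tau_0}^{\tau_0+1} e^{F(s)}L(\gamma_{x,\theta}^-,\dot\gamma_{x,\theta}^-,s)\,ds\leq C_{\mathrm{diam}(M)}\int_{\tau_0}^{\tau_0+1}e^{F(s)}\,ds.
\]
Dividing by $e^{F(\tau_0)}$ and using that $|F(s)-F(\tau_0)|\leq k_0$ on $[\tau_0,\tau_0+1]$ (a direct consequence of the $1$-periodicity of $f$), one obtains a constant $A_1$ independent of $\tau_0$, $x$, $\theta$ with $\int_{\tau_0}^{\tau_0+1}L(\gamma_{x,\theta}^-,\dot\gamma_{x,\theta}^-,s)\,ds\leq A_1$. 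Applying the superlinearity inequality $L\geq |v|-C(1)$ then yields
\[
\int_{\tau_0}^{\tau_0+1}|\dot\gamma_{x,\theta}^-(s)|\,ds\leq A_1+C(1)=:A.
\]

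The mean value inequality produces $\tau^*\in[\tau_0,\tau_0+1]$ with $|\dot\gamma_{x,\theta}^-(\tau^*)|\leq A$; in particular $(\gamma_{x,\theta}^-(\tau^*),\dot\gamma_{x,\theta}^-(\tau^*),\bar{\tau}^*)$ lies in the compact set $K_A:=\{(y,v,\vartheta)\in TM\times\T:|v|_y\leq A\}$. Since $\gamma_{x,\theta}^-$ solves \eqref{eq:e-l}, assumption \textbf{(H4)} makes $\varphi_L^t$ complete and continuous, so $\bigcup_{|s|\leq 1}\varphi_L^s(K_A)$ is compact and its velocity component is bounded by some $\kappa_0=\kappa_0(L,f,\alpha)$. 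For any $\tau<\theta$, taking $\tau_0:=\min\{\tau,\theta-1\}$ ensures $|\tau-\tau^*|\leq 1$, hence $|\dot\gamma_{x,\theta}^-(\tau)|\leq\kappa_0$, and Lipschitzness of $\gamma_{x,\theta}^-$ on every compact subinterval of $(-\infty,\theta]$ follows immediately.

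I expect the main obstacle to be the uniform action bound on unit intervals, which crucially requires both (i) the calibration/domination pair to promote $\gamma_{x,\theta}^-$ into a genuine fixed-endpoint minimizer, and (ii) the $1$-periodicity of $f$ to render $e^{F(s)-F(\tau_0)}$ uniformly comparable to $1$ across all $\tau_0$; the remaining propagation step is standard Tonelli regularity plus compactness of flow images under \textbf{(H4)}.
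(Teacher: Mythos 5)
Your argument is correct and coincides with the paper's own proof in all essential respects: both compare the weighted action of $\gamma_{x,\theta}^-$ on a unit interval against that of a geodesic, use superlinearity to extract an $L^1$ (hence, by a mean-value step, a pointwise) velocity bound at some time $\tau^*$, and then propagate this pointwise bound to all nearby times via the completeness and continuity of the Euler--Lagrange flow on the compact sub-level set. The only cosmetic difference is that you first isolate the fixed-endpoint minimality of $\gamma_{x,\theta}^-$ and then normalize by $e^{F(\tau_0)}$, whereas the paper works directly with the calibration/domination inequalities and keeps the factor $e^{F(\tau)}$ inside the integral; these lead to the same estimate.
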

\proof
 Let $s_1,s_2\leq \theta$ and $s_2-s_1=1$. Due to Lemma \ref{Sec3:calibrated}, 
\begin{align*}
	&e^{F(s_2)}u^-_{\alpha}(\gamma^-_{x,\theta}(s_2),s_2)-e^{F(s_1)}u^-_{\alpha}(\gamma^-_{x,\theta}(s_1),s_1)\\
	&=\int^{s_2}_{s_1}e^{F(\tau)}(L(\gamma^-_{x,\theta}(\tau),\dot{\gamma}^-_{x,\theta}(\tau),\tau)+\alpha)\mbox{d}\tau\\
	&\geq\int^{s_2}_{s_1}e^{F(\tau)}(|\dot{\gamma}^-_{x,\theta}(\tau)|-C(1)+\alpha)\mbox{d}\tau.
\end{align*}
On the other hand, let $\beta:[s_1,s_2]\to M$ be a geodesic satisfying
$\beta(s_1)=\gamma^-_{x,\theta}(s_1),\beta(s_2)=\gamma^-_{x,\theta}(s_2)$, and $|\dot\beta(\tau)|\leq \mbox{diam(M)}=:k_1$. Then
\begin{align*}
	&e^{F(s_2)}u^-_{\alpha}(\gamma^-_{x,\theta}(s_2),s_2)-e^{F(s_1)}u^-_{\alpha}(\gamma^-_{x,\theta}(s_1),s_1)\\
	&\leq \int^{s_2}_{s_1}e^{F(\tau)}(L(\beta(\tau),\dot{\beta}(\tau),\tau)+\alpha)\mbox{d}\tau\\
	&\leq \int^{s_2}_{s_1}e^{F(\tau)}(C_{k_1}+\alpha)\mbox{d}\tau.
\end{align*}
Hence,
$$
\int^{s_2}_{s_1}e^{F(\tau)}|\dot{\gamma}^-_{x,\theta}(\tau)|\mbox{d}\tau\leq \int^{s_2}_{s_1}e^{F(\tau)}(C_{k_1}+C(1))\mbox{d}\tau.
$$
Due to the continuity of $\dot{\gamma}^-_{x,\theta}(\tau)$, there exists $s_0\in(s_1,s_2)$ such that
\begin{equation}\label{eq:dot_bd}
	|\dot{\gamma}^-_{x,\theta}(s_0)|\leq C_{k_1}+C(1).
\end{equation}
Note that $\gamma^-_{x,\theta}$ solves \eqref{eq:e-l}, so $|\dot{\gamma}^-_{x,\theta}(\tau)|$ is uniformly bounded for $(x,\theta)\in M\times\mathbb{T}$ and $\tau\in(-\infty,\theta]$.\qed\medskip
\begin{lem}{\bf [(2) of Theorem \ref{thm:1}]}\label{lem:lip-dis}
For each  $\alpha\in\mathbb{R}$, $u_{\alpha}^-$	 is Lipschitz on $M\times\T$.
\end{lem}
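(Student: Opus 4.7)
The plan is to establish Lipschitzness separately in the $x$ and $t$ variables, using the calibrated curves from Lemma~\ref{Sec3:calibrated}, the velocity bound from Lemma~\ref{Sec3:calibrated dot bounded}, and the boundedness of $u_\alpha^-$ and of $F$ on $[0,1]$.

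First I would show Lipschitzness in the spatial variable at fixed $t\in[0,1]$. Given $x,y\in M$ with $d(x,y)$ small, let $\gamma_{y,t}^-:(-\infty,t]\to M$ be a backward calibrated curve ending at $y$, so $|\dot\gamma_{y,t}^-|\leq \kappa_0$ uniformly. Pick a small parameter $\eps>0$ (to be taken comparable to $d(x,y)$) and consider the concatenated curve that follows $\gamma_{y,t}^-$ on $(-\infty,t-\eps]$ and then follows a minimizing geodesic $\beta:[t-\eps,t]\to M$ from $\gamma_{y,t}^-(t-\eps)$ to $x$. By the triangle inequality the geodesic speed obeys $|\dot\beta|\leq\kappa_0+d(x,y)/\eps$, which is bounded uniformly once we choose $\eps=d(x,y)$ (assuming $d(x,y)\leq 1$; the large case is trivial because $u_\alpha^-$ is already bounded). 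Applying the domination inequality \eqref{eq:dominant} to the concatenated curve and subtracting the calibration identity for $\gamma_{y,t}^-$ on $[t-\eps,t]$ gives
\[
e^{F(t)}\bigl(u_\alpha^-(x,t)-u_\alpha^-(y,t)\bigr)\leq \int_{t-\eps}^t e^{F(\tau)}\bigl(L(\beta,\dot\beta,\tau)-L(\gamma_{y,t}^-,\dot\gamma_{y,t}^-,\tau)\bigr)\mbox{d}\tau.
\]
The integrand is bounded by a constant $K=K(\kappa_0,L)$ (since both velocities are bounded by $\kappa_0+1$, we use $C_{\kappa_0+1}$ from the beginning of Section~\ref{s2} for the upper bound and the superlinear lower bound $L\geq -C(0)$ for the lower one). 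With $e^{-F(t)}$ uniformly bounded on $[0,1]$ and $\eps=d(x,y)$, this yields $u_\alpha^-(x,t)-u_\alpha^-(y,t)\leq K'\, d(x,y)$; symmetry in $x$ and $y$ gives the two-sided bound.

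Next I would deduce Lipschitzness in time. Fix $x\in M$ and $t_1<t_2$ with $t_2-t_1\leq 1$. Applying the calibration identity \eqref{eq:calibrated_0} to a backward calibrated curve $\gamma_{x,t_2}^-$ on $[t_1,t_2]$ gives
\[
e^{F(t_2)}u_\alpha^-(x,t_2)-e^{F(t_1)}u_\alpha^-(\gamma_{x,t_2}^-(t_1),t_1)
=\int_{t_1}^{t_2}e^{F(\tau)}\bigl(L(\gamma_{x,t_2}^-,\dot\gamma_{x,t_2}^-,\tau)+\alpha\bigr)\mbox{d}\tau.
\]
The right-hand side is $O(t_2-t_1)$ because $|\dot\gamma_{x,t_2}^-|\leq\kappa_0$ and $L$ is bounded on $\{|v|\leq\kappa_0\}$. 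On the left I would replace $u_\alpha^-(\gamma_{x,t_2}^-(t_1),t_1)$ by $u_\alpha^-(x,t_1)$ at a cost of $K'\cdot d(\gamma_{x,t_2}^-(t_1),x)\leq K'\kappa_0(t_2-t_1)$ using the just-established spatial Lipschitz bound. Rearranging,
\[
e^{F(t_2)}\bigl(u_\alpha^-(x,t_2)-u_\alpha^-(x,t_1)\bigr)=(e^{F(t_1)}-e^{F(t_2)})u_\alpha^-(x,t_1)+O(t_2-t_1),
\]
and both terms on the right are $O(t_2-t_1)$ since $F$ is Lipschitz on $[0,2]$ and $u_\alpha^-$ is bounded. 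Dividing by $e^{F(t_2)}$, which is bounded below on $[0,1]$, gives the time-Lipschitz estimate. Combining the two bounds and using the periodicity established in Lemma~\ref{lem:per} extends the estimate to all of $M\times\T$.

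The main obstacle is to make the spatial comparison argument clean: one must choose $\eps$ carefully so that the interpolating geodesic stays in a regime where $L$ is uniformly bounded, yet the time window is short enough that the $e^{F(\tau)}$ factors and the $L$-integral scale linearly in $d(x,y)$. Once that calibration-plus-geodesic surgery is in place, the time-Lipschitz bound is an immediate consequence by feeding the spatial bound into the calibration identity on an interval of length $t_2-t_1$.
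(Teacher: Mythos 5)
Your spatial argument is essentially the paper's: a backward calibrated curve for one endpoint together with a geodesic interpolation over a time window of length $\sim d(x,y)$, with speed controlled by $\kappa_0+1$ after choosing $\eps=d(x,y)$. (The paper takes the calibrated curve at $x$ and compares with a curve reaching $y$; you take it at $y$ and compare with a curve reaching $x$, and you invoke the domination inequality on the geodesic segment rather than the infimum definition of $u_\alpha^-$ at the new endpoint --- these are interchangeable.) Your temporal argument, however, departs from the paper's. The paper proves time-Lipschitzness \emph{independently} of the spatial part, using two separate comparison curves: one obtained by extending $\gamma_{x,t}^-$ constantly to $[t,t']$, and the other by time-compressing the last segment of $\gamma_{x,t'}^-$ into an interval of length $\Delta t$, giving one inequality in each direction. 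You instead use the calibration \emph{equality} for $\gamma_{x,t_2}^-$ on $[t_1,t_2]$ --- automatically two-sided --- and then replace $u_\alpha^-(\gamma_{x,t_2}^-(t_1),t_1)$ by $u_\alpha^-(x,t_1)$ using the already-proved spatial Lipschitz constant together with $d(\gamma_{x,t_2}^-(t_1),x)\leq\kappa_0|t_2-t_1|$. This is correct and somewhat more economical (one calibration identity handles both directions), at the price of making the temporal estimate depend on the spatial one rather than being self-contained; the paper's route keeps the two bounds independent and consequently tracks the constants $\rho_*$ and $\rho_0^*$ separately. Both give the result.
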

\proof 
 First of all, we prove $u_\alpha^-(\cdot,\theta):M\rightarrow\R$ is uniformly  Lipschitz w.r.t. $\theta\in\mathbb{T}$. Let $x,y\in M$, $\Delta t=d(x,y)$, and $\gamma^-_{x,\theta}:(-\infty,\theta]\to M$ be a minimizer of $u^-_{\alpha}(x,\theta)$. Define $\tilde{\gamma}:(-\infty,\theta]\to M$ by
$$
\tilde{\gamma}(s)=
\begin{cases}
	\gamma^-_{x,\theta}(s),s\in(-\infty,\theta-\Delta t),\\
	\beta(s),s\in[\theta-\Delta t,\theta],
\end{cases}
$$
where $\beta:[\theta-\Delta t,\theta]\to M$ is a geodesic satisfying $\beta(\theta-\Delta t)=\gamma_{x,\theta}^-(\theta-\Delta t),\beta(\theta)=y$, and 
$$
|\dot{\beta}(s)|\equiv \frac{d(\gamma^-_{x,\theta}(\theta-\Delta t),y)}{\Delta t}\leq \frac{d(\gamma_{x,\theta}^-(\theta-\Delta t),x)}{\Delta t}+1\leq \kappa_0+1.
$$
Then,
\begin{align*}
	&u^-_{\alpha}(x,\theta)=\int^\theta_{-\infty}e^{F(\tau)-F(\theta)}(L(\gamma^-_{x,\theta}(\tau),\dot{\gamma}^-_{x,\theta}(\tau),\tau)+\alpha)\mbox{d}\tau,\\
	&u^-_{\alpha}(y,\theta)\leq \int^{\theta}_{-\infty}e^{F(\tau)-F(\theta)}(L(\tilde{\gamma}(\tau),\dot{\tilde{\gamma}}(\tau),\tau)+\alpha)\mbox{d}\tau,
\end{align*}
which implies
\begin{align*}
	u^-_{\alpha}(y,\theta)-u^-_{\alpha}(x,\theta)&\leq \int^\theta_{\theta-\Delta t}e^{F(\tau)-F(\theta)}(L(\beta,\dot{\beta},\tau)-L(\gamma_{x,\theta}^-,\dot{\gamma}_{x,\theta}^-,\tau))\mbox{d}\tau\\	
	&\leq (C_{\kappa_0+1}+C(0))\int^\theta_{\theta-\Delta t}e^{F(\tau)-F(\theta)}\mbox{d}\tau\\
	&\leq (C_{\kappa_0+1}+C(0))e^{2k_0+[f]}\cdot d(x,y).
\end{align*}

By a similar approach, we derive the opposite inequality holds. Hence,
\begin{equation}\label{eq:xlip}
|u^-_{\alpha}(y,\theta)-u^-_{\alpha}(x,\theta)|\leq \rho_*\cdot d(x,y),	
\end{equation}
where $\rho_*=(C_{\kappa_0+1}+C(0))e^{2k_0+[f]}$.\medskip

Next, we prove $u^-_{\alpha}(x,\cdot)$ is uniformly Lipschitz continuous for $x\in M$. Let $\bar{t},\bar{t}'\in\mathbb{T}, d(\bar{t},\bar{t}')=t'-t$, and $t\in[0,1)$. Then, $t'\in[0,2]$. 
A curve $\eta:(-\infty,t']\to M$ is defined by
$$
\eta(s)=
\begin{cases}
	\gamma^-_{x,t}(s),s\in(-\infty,t],\\
	x,\ \ s\in (t,t'].
\end{cases}
$$
Then,
\begin{align*}
	&\ \ \ \ e^{F(t')}u^-_{\alpha}(x,t')-e^{F(t)}u^-_{\alpha}(x,t)\\
	&\leq\int^{t'}_{-\infty}e^{F(\tau)}(L(\eta,\dot{\eta},\tau)+\alpha)\mbox{d}\tau-\int^t_{-\infty}e^{F(\tau)}(L(\gamma^-_{x,t},\dot{\gamma}^-_{x,t},\tau)+\alpha)\mbox{d}\tau\\
	&\leq \int^{t'}_te^{F(\tau)}(C_0+\alpha)\mbox{d}\tau\\
	&\leq (C_0+\alpha)\max_{\tau\in[0,2]}e^{F(\tau)}\cdot|t'-t|.
	\end{align*}
On the other hand, 
we write $\Delta t=d(\bar{t}',\bar{t})$ and define $\eta_1\in C^{ac}((-\infty,t],M)$ by
$$
\eta_1(s)=
\begin{cases}
	\gamma^-_{x,t'}(s),s\in(-\infty,t-\Delta t],\\
	\gamma^-_{x,t'}(2(s-t)+t'),s\in(t-\Delta t,t].
\end{cases}
$$
It is easy to check $\eta_1(t)=x$, and $|\dot{\eta}_1(\tau)|\leq 2\kappa_0$, where $\kappa_0$ is a Lipschitz constant of $\gamma_{x,t'}^-$.
\begin{align*}
	e^{F(t)}u^-_{\alpha}(x,t)&\leq \int^{t}_{-\infty}e^{F(\tau)}(L(\eta_1(\tau),\dot{\eta}_1(\tau),\tau)+\alpha)\mbox{d}\tau\\
	&\leq \int^{t}_{t-\Delta t}e^{F(\tau)}(L(\eta_1(\tau),\dot{\eta}_1(\tau),\tau)+\alpha)\mbox{d}\tau\\
	&+\int^{t-\Delta t}_{-\infty}e^{F(\tau)}(L(\gamma^-_{x,t'}(\tau),\dot{\gamma}^-_{x,t'}(\tau),\tau)+\alpha)\mbox{d}\tau.
	\end{align*}
 Note that $\gamma^-_{x,t'}$ is a minimizer of $u^-_\alpha(x,t')$. We derive that
 \begin{align*}
	&\ \ \ e^{F(t)}u^-_{\alpha}(x,t)-e^{F(t')}u^-_\alpha(x,t')\\
	&\leq \int^t_{t-\Delta t}e^{F(\tau)}(L(\eta_1(\tau),\dot{\eta}_1(\tau),\tau)+\alpha)\mbox{d}\tau\\
	&-\int^{t'}_{t-\Delta t}e^{F(\tau)}(L(\gamma^-_{x,t'}(\tau),\dot{\gamma}^-_{x,t'}(\tau),\tau)+\alpha)\mbox{d}\tau\\
	&\leq (C_{2\kappa_0}+2C(0)+|\alpha|)\max_{\tau\in[0,2]} e^{F(\tau)}\cdot d(\bar{t}',\bar{t}).
\end{align*}
We have proved the map $t\longmapsto e^{F(t)}u^-_{\alpha}(x,t)$ is uniformly Lipschitz for $x\in M$, with Lipschitz constant depends only on $L,f$ and $\alpha$. 
Note that $F(t)$ is $C^{r+1}$ and $F'(t)=f(t)$ is 1-periodic. We derive $u^-_\alpha(x,\cdot)$ is uniformly Lipschitz for $x\in M$ with Lipschitz constant $\rho^*_0$ depending on $L,f$, and $\alpha$.
It follows that
\begin{align*}
|u^-_{\alpha}(x',\theta')-u^-_{\alpha}(x,\theta)|&\leq |u^-_{\alpha}(x',\theta')-u^-_{\alpha}(x,\theta')|+|u^-_{\alpha}(x,\theta')-u^-_{\alpha}(x,\theta)|\\
&\leq\rho_*d(x',x)+\rho^*_0d(\theta',\theta)
\end{align*}
so we finish the proof.
\qed
\begin{lem} {\bf [(5) of Theorem \ref{thm:1}]}\label{lem:vis-sol-1}
The function $u_{\alpha}^-(x,t)$ defined by (\ref{Sec3:solution}) is a viscosity solution of (\ref{eq:sta-hj}).
\end{lem}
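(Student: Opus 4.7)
The plan is to verify the viscosity subsolution and supersolution inequalities at an arbitrary test point $(x_0,t_0)\in M\times\T$; both halves hinge on the same elementary identity
$$\frac{d}{dt}\bigl[e^{F(t)}\phi(\gamma(t),t)\bigr]=e^{F(t)}\bigl(f(t)\phi+\partial_t\phi+\partial_x\phi\cdot\dot\gamma\bigr),$$
which is precisely what the integrating factor $e^{F(t)}$ in (\ref{eq:dom}) and (\ref{eq:cal}) is set up to produce. Dividing by $e^{F(t_0)}$ at the end is exactly what makes the zeroth-order term $f(t)u$ appear in (\ref{eq:sta-hj}).

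For the subsolution inequality, I take $\phi\in C^1$ with $\phi\geq u_\alpha^-$ and $\phi(x_0,t_0)=u_\alpha^-(x_0,t_0)$, fix an arbitrary $v\in T_{x_0}M$, and use the short geodesic $\gamma(s)=\exp_{x_0}((s-t_0)v)$ on $[t_0-h,t_0]$. The domination Lemma \ref{Sec3:dominant} applied to this $\gamma$, combined with $\phi\geq u_\alpha^-$ at the left endpoint and equality at $(x_0,t_0)$, gives
$$e^{F(t_0)}\phi(x_0,t_0)-e^{F(t_0-h)}\phi(\gamma(t_0-h),t_0-h)\leq\int_{t_0-h}^{t_0}e^{F(\tau)}\bigl(L(\gamma,\dot\gamma,\tau)+\alpha\bigr)d\tau.$$
Dividing by $h$, sending $h\to 0^+$ and dividing by $e^{F(t_0)}$ yields
$$\partial_t\phi(x_0,t_0)+\partial_x\phi(x_0,t_0)\cdot v+f(t_0)\phi(x_0,t_0)\leq L(x_0,v,t_0)+\alpha.$$
Taking the supremum over $v\in T_{x_0}M$ and invoking the Legendre duality (\ref{eq:led}) produces the desired $\partial_t\phi+f(t_0)\phi+H(x_0,\partial_x\phi,t_0)\leq\alpha$.

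For the supersolution inequality, I take $\phi\in C^1$ with $\phi\leq u_\alpha^-$ and equality at $(x_0,t_0)$, and now use the backward calibrated curve $\gamma_{x_0,t_0}^-$ supplied by Lemma \ref{Sec3:calibrated}, which is $C^r$ on $(-\infty,t_0]$ and hence has a genuine velocity $v_0:=\dot\gamma_{x_0,t_0}^-(t_0)\in T_{x_0}M$. Applying the calibration equality (\ref{eq:cal}) on $[t_0-h,t_0]$ together with $\phi\leq u_\alpha^-$ (with equality at $t_0$) reverses the previous chain, giving in the limit
$$\partial_t\phi(x_0,t_0)+\partial_x\phi(x_0,t_0)\cdot v_0+f(t_0)\phi(x_0,t_0)\geq L(x_0,v_0,t_0)+\alpha.$$
Combined with the pointwise Fenchel inequality $H(x_0,\partial_x\phi,t_0)\geq\langle\partial_x\phi,v_0\rangle-L(x_0,v_0,t_0)$ coming from (\ref{eq:led}), this delivers $\partial_t\phi+f(t_0)\phi+H(x_0,\partial_x\phi,t_0)\geq\alpha$.

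The only step requiring a bit of attention is the passage to the limit $h\to 0^+$ in the subsolution part: since the test curve $\gamma$ is not a minimizer, I need $\frac{1}{h}\int_{t_0-h}^{t_0}e^{F(\tau)}L(\gamma,\dot\gamma,\tau)d\tau\to e^{F(t_0)}L(x_0,v,t_0)$, which follows from the continuity of $L$ together with the $C^1$ choice of $\gamma$. The supersolution side has no such issue because $\gamma_{x_0,t_0}^-$ is $C^r$ with velocity bounded uniformly by Lemma \ref{Sec3:calibrated dot bounded}.
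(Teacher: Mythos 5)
Your proposal is correct and follows essentially the same argument as the paper: for the subsolution side, apply the domination inequality along an arbitrary $C^1$ curve through $(x_0,t_0)$ with prescribed velocity $v$, divide by the time increment, let it shrink, and maximize over $v$ via Legendre duality; for the supersolution side, replace that test curve with the backward calibrated curve and use the calibration equality instead, finishing with the Fenchel inequality. The only cosmetic difference is that you package the computation through the integrating-factor identity $\frac{d}{dt}[e^{F(t)}\phi(\gamma(t),t)]$, while the paper expands $e^{F(t_0)}\phi^*-e^{F(t_0+\Delta t)}\phi^*$ plus a correction term and lets $\Delta t\to 0^-$ directly — the resulting inequalities are identical.
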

\proof 
 Let $\phi^*(x,t)$ be a $C^1$ function such that $u^-_{\alpha}(x,t)-\phi^*(x,t)$ attains maximum at $(x_0,t_0)$ and $u(x_0,t_0)=\phi^*(x_0,t_0)$.
For each $v\in T_{x_0}M$, there exists a $C^1$ curve $\gamma$ defined on a neighborhood of $t_0$ with $\dot{\gamma}(t_0)=v$ and $\gamma(t_0)=x_0$.
Let $\Delta t<0$. 
Then
\ben
	& &e^{F(t_0)}\phi^*(\gamma(t_0),t_0)-e^{F(t_0)}\phi^*(\gamma(t_0+\Delta t),t_0+\Delta t)\\
	&\leq& e^{F(t_0)}u^-_{\alpha}(\gamma(t_0),t_0)-e^{F(t_0)}u^-_{\alpha}(\gamma(t_0+\Delta t),t_0+\Delta t)\\
	&=&e^{F(t_0)}u^-_{\alpha}(\gamma(t_0),t_0)-e^{F(t_0+\Delta t)}u^-_{\alpha}(\gamma(t_0+\Delta t),t_0+\Delta t)\\
	& &+(e^{F(t_0+\Delta t)}-e^{F(t_0)})u^-_{\alpha}(\gamma(t_0+\Delta t),t_0+\Delta t).
\een
By (\ref{eq:dominant}),
we derive that
\begin{align*}
	&e^{F(t_0)}\bigg(\frac{\phi^*(\gamma(t_0+\Delta t),t_0+\Delta t)-\phi^*(\gamma(t_0),t_0)}{\Delta t}\bigg)\\
	&\leq \frac{1}{\Delta t}\int^{t_0+\Delta t}_{t_0}e^{F(\tau)}(L(\gamma(\tau),\dot{\gamma}(\tau),\tau)+\alpha)\mbox{d}\tau\\
	&\ \ \ \ \ \ \  \ \ \ -\bigg(\frac{e^{F(t_0+\Delta t)}-e^{F(t_0)}}{\Delta t}\bigg)u^-_{\alpha}(\gamma(t_0+\Delta t),t_0+\Delta t).
\end{align*}
Taking $\Delta t\to 0^-$, we derive that
$$
\partial_t\phi^*(x_0,t_0)+\partial_x\phi^*(x_0,t_0)\cdot v-L(x_0,v,t_0)+f(t_0)u^-_{\alpha}(x_0,t_0)\leq \alpha.
$$
By the arbitrariness of $v$,
$$
\partial_t\phi^*(x_0,t_0)+H(x_0,\partial_x\phi^*(x_0,t_0),t_0)+f(t_0)u^-_{\alpha}(x_0,t_0)\leq\alpha,
$$
which implies $u^-_{\alpha}(x,t)$ is a viscosity subsolution of (\ref{eq:sta-hj}).

Let $(x_0,t_0)\in M\times\mathbb{R}$ and $\gamma^-_{x_0,t_0}:(-\infty,t_0]\to M$ be a minimizer of $u^-_{\alpha}(x_0,t_0)$ and let $\phi_*(x,t)\in C^1(M\times\mathbb{R},\mathbb{R})$ such that
$u^-_{\alpha}(x,t)-\phi_*(x,t)$ attains minimum at $(x_0,t_0)$. Then, for $\Delta t<0$,
\begin{align*}
	&\ \  e^{F(t_0)}(\phi_*(\gamma^-_{x_0,t_0}(t_0),t_0)-\phi_*(\gamma^-_{x_0,t_0}(t_0+\Delta t),t_0+\Delta t))\\
	&\geq e^{F(t_0)}u^-_{\alpha}(\gamma^-_{x_0,t_0}(t_0),t_0)-e^{F(t_0+\Delta t)}u^-_{\alpha}(\gamma^-_{x_0,t_0}(t_0+\Delta t),t_0+\Delta t)\\
	& \ \   +e^{F(t_0+\Delta t)}u^-_{\alpha}(\gamma^-_{x_0,t_0}(t_0+\Delta t),t_0+\Delta t)-e^{F(t_0)}u^-_{\alpha}(\gamma^-_{x_0,t_0}(t_0+\Delta t),t_0+\Delta t)\\
	&=\int^{t_0}_{t_0+\Delta t}e^{F(\tau)}(L(\gamma^-_{x_0,t_0}(\tau),\dot{\gamma}^-_{x_0,t_0}(\tau),\tau)+\alpha)\mbox{d}\tau\\
	&\ \  +(e^{F(t_0+\Delta t)}-e^{F(t_0)})u^-_{\alpha}(\gamma^-_{x_0,t_0}(t_0+\Delta t),t_0+\Delta t).
\end{align*}
Then
\begin{align*}
	&e^{F(t_0)}\frac{\phi_*(\gamma^-_{x_0,t_0}(t_0+\Delta t),t_0+\Delta t)-\phi_*(\gamma^-_{x_0,t_0}(t_0),t_0)}{\Delta t}\\
	&\geq \frac{1}{\Delta t}\int^{t_0+\Delta t}_{t_0}e^{F(\tau)}(L(\gamma^-_{x_0,t_0}(\tau),\dot{\gamma}^-_{x_0,t_0}(\tau),\tau)+\alpha)\mbox{d}\tau\\
	&-\bigg(\frac{e^{F(t_0+\Delta t)}-e^{F(t_0)}}{\Delta t}\bigg)u^-_{\alpha}(\gamma^-_{x_0,t_0}(t_0+\Delta t),t_0+\Delta t).
\end{align*}
Taking $\Delta t\to 0^-$, we derive that
\[
\partial_t\phi_*(x_0,t_0)+H(x_0,\partial_x\phi_*(x_0,t_0),t_0)+f(t_0)u(x_0,t_0)\geq \alpha
\]
which implies the assertion.
\qed\medskip





As an complement, the following result is analogue to Proposition 6 of \cite{MS} will be useful in the following sections:
\begin{prop}
\label{Sec3:pro_differentiable}
	The weak KAM solution $u_\alpha^-$ of (\ref{eq:sta-hj}) is differentiable at $(\gamma_{x,t}^-(s),\bar s)$ for any $\R\ni s<t$, where $\gamma_{x,t}^-: (-\infty,t]\to M$ is a  backward calibrated curve 
	ending with $x$. In other words, we have 
	$$
	\partial_tu^-(\gamma_{x,t}^-(s),s)+H(\gamma_{x,t}^-(s),\partial_xu^-_\alpha(\gamma_{x,t}^-(s),s),s)+f(s)u^-_\alpha(\gamma_{x,t}^-(s),s)=\alpha
	$$	
and 
\be
(\gamma_{x,t}^-(s),\dot\gamma_{x,t}^-(s),\bar s)=\cL\Big(\gamma_{x,t}^-(s),\partial_xu^-_\alpha(\gamma_{x,t}^-(s),s),\bar s\Big)
\ee
for all $\R\ni s<t$.
\end{prop}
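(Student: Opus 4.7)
The plan is to sandwich $u_\alpha^-$ between two $C^1$ functions that agree at $(y, \bar s) := (\gamma_{x,t}^-(s), \bar s)$, forcing differentiability there with a computable gradient. Since $s < t$, the backward calibrated curve $\gamma := \gamma_{x,t}^-$ (which is $C^r$ and solves the dissipative E-L equation (\ref{eq:e-l}), by Lemma \ref{Sec3:calibrated}) extends across $s$ into both $[s-h, s]$ and $[s, s+h]$ for some small $h > 0$ with $s + h < t$, so the calibration identity (\ref{eq:calibrated_0}) is available on both sides of $s$.

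For the upper sandwich, choose a $C^1$ family of curves $\beta_{z,r}: [s-h, r] \to M$, smoothly depending on $(z, r)$ near $(y, s)$, with $\beta_{z,r}(s-h) = \gamma(s-h)$, $\beta_{z,r}(r) = z$, and $\beta_{y,s} = \gamma|_{[s-h, s]}$. Applying domination (\ref{eq:dominant}) to the concatenation of $\gamma|_{(-\infty, s-h]}$ with $\beta_{z,r}$ yields
\[
u_\alpha^-(z,r) \le U_h(z,r) := e^{-F(r)}\Big[e^{F(s-h)} u_\alpha^-(\gamma(s-h), s-h) + \int_{s-h}^{r} e^{F(\tau)}(L(\beta_{z,r}, \dot\beta_{z,r}, \tau) + \alpha)\, d\tau\Big],
\]
with equality at $(y, s)$ thanks to (\ref{eq:calibrated_0}). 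Symmetrically, fix a $C^1$ family $\eta_{z,r}: [r, s+h] \to M$ with $\eta_{z,r}(r) = z$, $\eta_{z,r}(s+h) = \gamma(s+h)$, $\eta_{y,s} = \gamma|_{[s, s+h]}$, and deduce from calibration on $[s, s+h]$ together with domination along $\eta_{z,r}$ that
\[
u_\alpha^-(z,r) \ge V_h(z,r) := e^{-F(r)}\Big[e^{F(s+h)} u_\alpha^-(\gamma(s+h), s+h) - \int_{r}^{s+h} e^{F(\tau)}(L(\eta_{z,r}, \dot\eta_{z,r}, \tau) + \alpha)\, d\tau\Big],
\]
again with equality at $(y, s)$. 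Both $U_h, V_h$ are $C^1$ in $(z, r)$ by smooth dependence of the integrals on the parameters.

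Since $U_h - V_h \ge 0$ on a neighborhood of $(y, s)$ with a zero at $(y, s)$, one has $\nabla(U_h - V_h)(y, s) = 0$; combined with $V_h \le u_\alpha^- \le U_h$, this forces $u_\alpha^-$ to be differentiable at $(y, \bar s)$ with $\nabla u_\alpha^-(y, \bar s) = \nabla V_h(y, s) = \nabla U_h(y, s)$. To identify the spatial gradient, rewrite (\ref{eq:e-l}) as $\frac{d}{d\tau}(e^{F(\tau)} L_v) = e^{F(\tau)} L_x$ along $\gamma$, so that first variations of $\int e^{F(\tau)} L\, d\tau$ about $\gamma$ reduce to boundary contributions; using $\partial_z \eta_{z,s}(s) = \mathrm{id}$ and $\partial_z \eta_{z,s}(s+h) = 0$ one computes $\partial_x V_h(y, s) = L_v(\gamma(s), \dot\gamma(s), s) =: p(s)$. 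Since $u_\alpha^-$ is a viscosity solution of (\ref{eq:sta-hj}) and is now known to be differentiable at $(y, \bar s)$, the equation holds classically there; the Legendre identity $H_p(y, p(s), s) = \dot\gamma(s)$ then yields $\cL(\gamma(s), \partial_x u_\alpha^-(\gamma(s), s), \bar s) = (\gamma(s), \dot\gamma(s), \bar s)$.

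The main technical point is the construction of the smooth variation families $\beta_{z,r}, \eta_{z,r}$ with $C^1$ dependence on the endpoint $(z, r)$ (with $r$ appearing both as a parameter and as an endpoint of integration). Working in a coordinate chart containing $\gamma([s-h, s+h])$, one may take for example $\beta_{z,r}(\tau) := \gamma(\tau) + \phi\!\left(\tfrac{\tau - (s-h)}{r - (s-h)}\right)(z - \gamma(r))$ with a smooth cutoff $\phi:[0,1]\to[0,1]$ equal to $0$ at $0$ and $1$ at $1$; smoothness in $(z, r)$ is then transparent, the boundary conditions are built in, and the specialization $(z, r) = (y, s)$ recovers $\gamma|_{[s-h, s]}$. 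An entirely analogous construction works for $\eta_{z,r}$.
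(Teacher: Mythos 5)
Your proof is correct, but it takes a genuinely different route from the paper. The paper first invokes Theorem~\ref{Sec3:thm_semiconcave2} to establish that $u_\alpha^-$ is (locally) semiconcave, so $D^+u^-_\alpha(\gamma_{x,t}^-(s),s) \neq \emptyset$; it then uses the one-sided calibration identity together with Proposition~\ref{Sec3:prop_semiconcave1} to show that every $(p_x,p_t)\in D^+u^-_\alpha(\gamma_{x,t}^-(s),s)$ satisfies the Hamilton--Jacobi equation with \emph{equality}, and finally uses strict convexity of $H$ in $p$ to conclude that the superdifferential is a singleton, hence $u_\alpha^-$ is differentiable there. Your argument avoids semiconcavity altogether: you build an explicit upper touching function $U_h$ from domination on $[s-h,r]$ and an explicit lower touching function $V_h$ from domination on $[r,s+h]$ (both $C^1$ in the endpoint data via a smooth variation family, and both agreeing with $u_\alpha^-$ at $(\gamma(s),s)$ by calibration on either side of $s$), and then deduce differentiability by the standard sandwich observation that $\nabla U_h = \nabla V_h$ at a zero minimum of $U_h-V_h$. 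The first-variation/Euler--Lagrange integration by parts to identify $\partial_x u_\alpha^- = L_v(\gamma(s),\dot\gamma(s),s)$, followed by the Legendre relation, is also correct. What your approach buys is self-containedness --- it uses only domination, calibration and the E--L equation, without importing semiconcavity theory from \cite{Canna,Canna2}; what the paper's approach buys is brevity, since semiconcavity of viscosity solutions is a general fact that, once cited, reduces the argument to the ``singleton superdifferential'' step. One small wording point: the inequality $u_\alpha^- \geq V_h$ already follows from domination alone; calibration on $[s,s+h]$ is needed only for the equality at $(\gamma(s),s)$, so the phrase ``deduce from calibration together with domination'' slightly misattributes what each ingredient is doing, though the content is right.
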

\proof	
By Theorem \ref{Sec3:thm_semiconcave2}, we derive $u^-_\alpha(x,t)$ is semiconcave. Let $s\in(-\infty,t)$ and $\tilde{p}=(p_x,p_t)\in D^+u^-_{\alpha}(\gamma_{x,t}^-(s),s)$.
For $\Delta s>0$, 
\ben
	& &\frac{e^{F(s+\Delta s)}u^-_\alpha(\gamma_{x,t}^-(s+\Delta s),s+\Delta s)-e^{F(s)}u^-_{\alpha}(\gamma^-_{x,t}(s),s)}{\Delta s}\\
	&=&\frac{1}{\Delta s}\int^{s+\Delta s}_{s}e^{F(\tau)}(L(\gamma_{x,t}^-(\tau),\dot{\gamma}_{x,t}^-(\tau),\tau)+\alpha)\mbox{d}\tau.
\een
Then
\ben
	& &\lim_{\Delta s\to 0^+}\frac{u^-_{\alpha}(\gamma_{x,t}^-(s+\Delta s),s+\Delta s)-u^-_{\alpha}(\gamma_{x,t}^-(s),s)}{\Delta s}\\
	&=&L(\gamma_{x,t}^-(s),\dot{\gamma}_{x,t}^-(s),s)+\alpha-f(s)u^-_{\alpha}(\gamma_{x,t}^-(s),s).
\een
By Proposition \ref{Sec3:prop_semiconcave1}, 
$$
\lim_{\Delta s\to 0^+}\frac{u^-_{\alpha}(\gamma_{x,t}^-(s+\Delta s),s+\Delta s)-u^-_{\alpha}(\gamma_{x,t}^-(s),s)}{\Delta s}\leq p_x\cdot\dot{\gamma}_{x,t}^-(s)+p_t,
$$
which implies
\ben
& &p_t+H(\gamma_{x,t}^-(s),p_x,s)+f(s)u^-_{\alpha}(\gamma_{x,t}^-(s),s)\geq \alpha.
\een
On the other hand, $u^-_\alpha$ is a viscosity solution of (\ref{eq:sta-hj}). Hence, for each $(p_x,p_t)\in D^+u^-_{\alpha}(\gamma_{x,t}^-(s),s)$,
\begin{equation}\label{eq:singleton}
	p_t+H(\gamma_{x,t}^-(s),p_x,s)+f(s)u^-_{\alpha}(\gamma_{x,t}^-(s),s)=\alpha.
\end{equation}
Note that $H(x,p,t)$ is strictly convex with respect to $p$. By (\ref{eq:singleton}), We derive $D^+u^-_{\alpha}(\gamma_{x,t}^-(s),s)$ is a singleton.
 By Proposition \ref{Sec3:prop_semiconcave1}, $u^-_{\alpha}(x,t)$ is differentiable at $(\gamma_{x,t}^-(s),s)$.\qed


\subsection{Weak KAM solution of (\ref{eq:sta-hj2}) in the condition (\textbf{H0}$^0$)}

Now $[f]=0$, so $F(t):=\int^t_0 f(\tau)\mbox{d}\tau$ is 1-periodic. Let $\ell=\int^1_0e^{F(\tau)}\mbox{d}\tau$, then we define a new Lagrangian $\mathbf{L}:TM\times\mathbb{T}\to\mathbb{R}$ by
$$
\mathbf{L}(x,v,t)=e^{F(t)}L(x,v,t).
$$
For such a  $\mathbf{L}$, {\sf the Peierls Barrier} $\textbf{h}^\infty_\alpha:M\times\mathbb{T}\times M\times\mathbb{T}\to\mathbb{R}$ 
$$
\textbf{h}^\infty_\alpha(x,\bar{s},y,\bar{t})=\liminf_{\substack{t\equiv\bar{t},s\equiv \bar{s} (mod\; 1)\\t-s\to+\infty}}\inf_{\substack{\gamma\in C^{ac}([s,t],M) \\ \gamma(s)=x,\gamma(t)=y }}\int^t_s\mathbf{L}(\gamma,\dot{\gamma},\tau)+\alpha\cdot\ell \mbox{d}\tau,
$$
is well-defined, once $\alpha$ is uniquely established by 
\begin{equation}\label{eq:def_critical}
c(H)=\inf\{\alpha\in\R|\int^t_s\mathbf{L}(\gamma,\dot{\gamma},\tau)+\alpha\cdot\ell\mbox{d}\tau\geq 0,\ \forall \gamma\in\mathcal{C} \}
\end{equation}
with $\mathcal{C}=\{\gamma\in C^{ac}([s,t],M)|\gamma(s)=\gamma(t)\mbox{ and }t-s\in\mathbb{Z}_+\}$, due to Proposition 2 of \cite{CIM}. Moreover,
the following properties were proved in \cite{CIM}:
\begin{prop}\label{pro3:basic property}
\begin{description}
	\item [(i)] If $\alpha<c(H)$, $\textbf{h}^\infty_\alpha\equiv-\infty$.
		\item [(ii)] If $\alpha>c(H)$, $\textbf{h}^\infty_\alpha\equiv+\infty$.
		\item [(iii)] $\textbf{h}^\infty_{c(H)}$ is finite.
		\item [(iv)] $\textbf{h}^\infty_{c(H)}$ is Lipschitz.
		\item [(v)] For each $\gamma\in C^{ac}([s,t],M)$ with $\gamma(s)=x,\gamma(t)=y$,
		$$\textbf{h}^\infty_{c(H)}(z,\bar{\varsigma},y,\bar{t})-\textbf{h}^\infty_{c(H)}(z,\bar{\varsigma},x,\bar{s})\leq\int^t_s\mathbf{L}(\gamma,\dot{\gamma},\tau)+c(H)\cdot\ell\mbox{d}\tau.
		$$
\end{description}
\end{prop}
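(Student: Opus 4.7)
The plan is to reduce the five assertions to classical Mather theory for time-periodic Tonelli Lagrangians by observing that $\mathbf{L}(x,v,t)=e^{F(t)}L(x,v,t)$ is itself a Tonelli Lagrangian: it is $C^r$, strictly convex and superlinear in $v$, and the multiplier $e^{F(t)}$ is positive, $1$-periodic and bounded above and below because $[f]=0$ forces $F$ to be $1$-periodic. On intervals of integer length we have the identity $\int_s^t \alpha\cdot\ell\,d\tau=\alpha\ell(t-s)=\alpha\int_s^t e^{F(\tau)}d\tau$, so the functional inside $\textbf{h}^\infty_\alpha$ is precisely the classical Tonelli action of $\mathbf{L}$ with constant $\alpha\cdot e^{F(\cdot)}$, and the value $c(H)$ defined by (\ref{eq:def_critical}) is a rescaling by $1/\ell$ of the Ma\~n\'e critical value of $\mathbf{L}$. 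Under this dictionary, (i)--(v) are the standard Peierls-barrier properties of Contreras--Iturriaga--Mather.

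For (i), I exploit the infimum in (\ref{eq:def_critical}): for $\alpha<c(H)$ there is a loop $\gamma_0\in\mathcal{C}$ of integer length $n_0$ whose action $\int(\mathbf{L}+\alpha\ell)d\tau$ is strictly negative. Given $(z,\bar\varsigma),(y,\bar t)$, I fix any admissible connecting curve and prepend $k$ shifted copies of $\gamma_0$ (the shifts being integers, so the $1$-periodicity of $\mathbf{L}$ is respected); letting $k\to\infty$ sends the action to $-\infty$, so $\textbf{h}^\infty_\alpha\equiv-\infty$. For (ii), I use the telescoping identity $\int_s^t(\mathbf{L}+\alpha\ell)d\tau=\int_s^t(\mathbf{L}+c(H)\ell)d\tau+(\alpha-c(H))\ell(t-s)$ on integer intervals, together with the non-negativity of the first term on loops that is built into (\ref{eq:def_critical}). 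Any competitor from $(z,\bar\varsigma)$ to $(y,\bar t)$ can be closed into a loop by appending a geodesic of uniformly bounded action, hence its action grows at least like $(\alpha-c(H))\ell(t-\varsigma)$, giving $\textbf{h}^\infty_\alpha\equiv+\infty$.

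For (iii), both bounds are concrete. The upper bound uses an explicit competitor that loiters near $z$ for an integer number of periods and then runs a geodesic to $y$ over one period, whose action is uniformly bounded by the definition of $c(H)$ and the Tonelli estimates from the beginning of Section \ref{s2}. The lower bound combines (\ref{eq:def_critical}) with superlinearity of $L$ to yield $\int_s^t(\mathbf{L}+c(H)\ell)d\tau\ge -K$ on every admissible curve between two fixed period representatives. For (iv), the standard endpoint-perturbation works: moving an endpoint by $d(x,x')$ and splicing in a short geodesic changes the action by at most $C\cdot d(x,x')$, with $C$ depending only on $L$ and $\|e^F\|_\infty$; similarly for a time shift, using Lipschitzness of $t\mapsto e^{F(t)}$. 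For (v), I choose a near-minimizing sequence for $\textbf{h}^\infty_{c(H)}(z,\bar\varsigma,x,\bar s)$ along times $s_n\to+\infty$ with $s_n\equiv s\pmod 1$, concatenate each member with a time-shifted copy of the given $\gamma$ landing at $y$ at time $s_n+(t-s)$, and pass to the liminf.

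The part I expect to be the genuine obstacle is (ii): the infimum characterization of $c(H)$ only says that loops fail to have action strictly less than $-c(H)\ell(t-s)$, so extracting the linear growth rate $(\alpha-c(H))\ell n$ uniformly over loops of integer length $n$, and then transferring that growth from closed to open curves between $(z,\bar\varsigma)$ and $(y,\bar t)$, requires a closing-up argument. I would carry it out by attaching a short geodesic of velocity controlled by $\mathrm{diam}(M)$, verifying that its contribution is a bounded correction absorbed by the linear term once $t-\varsigma$ is sufficiently large. The remaining items, once (i) and (ii) are settled, follow from essentially routine Tonelli-action estimates.
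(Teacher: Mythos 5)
The paper does not prove this Proposition at all: immediately before the statement it says ``the following properties were proved in \cite{CIM},'' and moves on, so there is no internal proof to compare against. What you have done is reconstruct the argument behind the citation, and your reconstruction is the standard one. Reducing to the observation that $\mathbf L(x,v,t)=e^{F(t)}L(x,v,t)$ is a genuine time-periodic Tonelli Lagrangian (since $[f]=0$ makes $e^{F}$ a positive, bounded, $1$-periodic multiplier) is exactly the reduction the paper itself relies on when it invokes Ma\~n\'e in Appendix~\ref{a1}, and the ingredients you list---prepending a negative-action loop for (i), telescoping against $c(H)$ on integer intervals plus a geodesic closing-up for (ii), explicit competitors and the loop bound for (iii), endpoint perturbation by short geodesics for (iv), and near-minimizer concatenation for (v)---are precisely the Peierls-barrier arguments of Contreras--Iturriaga--Morgado and of Mather's original work. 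One small correction to your own assessment: you flag (ii) as the ``genuine obstacle,'' but once you note that $\int(\mathbf L+\alpha\ell)\,d\tau=\int(\mathbf L+c(H)\ell)\,d\tau+(\alpha-c(H))\,\ell\,(t-s)$ and that the first term is nonnegative on every loop in $\mathcal C$ by the very definition \eqref{eq:def_critical}, the linear growth is immediate on loops; transferring it to open curves by appending a uniformly bounded geodesic leg is no harder than what you already do in (iii) and (iv). So (ii) is as routine as the other items, and your sketch, though not rewritten here with full $\varepsilon$-bookkeeping, follows the correct and intended route.
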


Consequently, for any $(z,\bar{\varsigma})\in M\times\mathbb{T}$ fixed, we construct a function $u_{z,\bar{\varsigma}}:M\times\mathbb{T}\rightarrow \mathbb{R}$ by
\begin{equation}\label{eq3:def_u}
	u^-_{z,\bar{\varsigma}}(x,\bar{t})=e^{-F(\bar{t})}\bigg(\textbf{h}^\infty_{c(H)}(z,\bar{\varsigma},x,\bar{t})+c(H)\cdot\int^t_\varsigma e^{F(\tau)}-\ell\mbox{d}\tau\bigg).
\end{equation}

\textbf{Proof of Theorem \ref{cor:1}: }
(1) 
Due to (iv) of Proposition \ref{pro3:basic property}, $u^-_{z,\bar{\varsigma}}$ is also Lipschitz.

(2) The domination property of $u^-_{z,\bar{\varsigma}}$ can be achieved immediately by (v) of Proposition \ref{pro3:basic property}.
 
 (3) By Tonelli Theorem and the definition of $u^-_{z,\bar{\varsigma}}$, there exists a sequence ${\varsigma_{k}}$ tending to $-\infty$ and a sequence $\gamma_k\in C^{ac}([\varsigma_k,\theta],M)$ with $\gamma_k(\varsigma_k)=z,\gamma_k(\theta)=x$, such that $\gamma_k$ minimizes the action function
 
 $$
\mathcal{F}(\beta)= \inf_{\substack{\beta\in C^{ac}([\varsigma_k,\theta]) \\\beta(\varsigma_k)=z,\beta(\theta)=x }}\int^\theta_{\varsigma_k}e^{F(\tau)}(L(\beta,\dot{\beta},\tau)+c(H))\mbox{d}\tau
 $$
 and
 $$
e^{F(\theta)}u^-_{z,\bar{\varsigma}}(x,\theta)=\lim_{k\to+\infty}\int^\theta_{\varsigma_k}e^{F(\tau)}(L(\gamma_k,\dot{\gamma}_k,\tau)+c(H))\mbox{d}\tau.
 $$
 Since each $\gamma_k$ solves (\ref{eq:e-l}), which implies $\gamma_k$ is $C^r$. By a standard way, there exists $\kappa_0$ independent of the choice of $k$, such that $|\dot{\gamma}_k|\leq \kappa_0$, when $\theta-\varsigma_k\geq 1$. 
 By Ascoli Theorem, there exists a subsequence of $\{\gamma_k\}$ (denoted still by $\gamma_k$) and an absolutely continuous curve $\gamma^-_{x,\theta}:(-\infty,\theta]\to M$ such that $\gamma_k$ converges uniformly to $\gamma^-_{x,\theta}$ on each compact subset of $(-\infty,\theta]$ and $\gamma^-_{x,\theta}(\theta)=x$. Then, for each $s<\theta$,
\ben
	e^{F(\theta)}u^-_{z,\bar{\varsigma}}(x,\theta)
	&=&\lim_{k\to+\infty}\bigg(\int^s_{\varsigma_k}e^{F(\tau)}(L(\gamma_k,\dot{\gamma}_k,\tau)+c(H))\mbox{d}\tau\\
	& &+\int^{\theta}_se^{F(\tau)}(L(\gamma_k,\dot{\gamma}_k,\tau)+c(H))\mbox{d}\tau\bigg)\\
	&\geq&\liminf_{k\to+\infty}\int^s_{\varsigma_k}e^{F(\tau)}(L(\gamma_k,\dot{\gamma}_k,\tau)+c(H))\mbox{d}\tau\\
	& &+\liminf_{k\to+\infty}\int^\theta_se^{F(\tau)}(L(\gamma_k,\dot{\gamma}_k,\tau)+c(H))\mbox{d}\tau\\
	&\geq& e^{F(s)}u^-_{z,\bar{\varsigma}}(\gamma^-_{x,\theta}(s),s)+\int^\theta_se^{F(\tau)}(L(\gamma^-_{x,\theta},\dot{\gamma}^-_{x,\theta},\tau)+c(H))\mbox{d}\tau
\een
which implies $\gamma^-_{x,\theta}$ is a calibrated curve by $u^-_{z,\bar{\varsigma}}$. 

(4) By a similar approach of the proof of Lemma \ref{lem:vis-sol-1}, we derive $u^-_{z,\bar{\varsigma}}$ is also a viscosity solution of (\ref{eq:sta-hj2}).
\qed

\section{Various properties of variational invariant sets}\label{s3}

\subsection{Aubry set in the condition \textbf{(H0$^-$)}} Due to Theorem \ref{thm:1} and Proposition \ref{Sec3:pro_differentiable}, for any $(x,\bar{s})\in M\times\T$ we can find a backward calibrated curve 
\be
\wt\gamma_{x,s}^-:=\begin{pmatrix}
      \gamma_{x,s}^-(t)   \\
      \bar t  
\end{pmatrix}:t\in (-\infty,s]\rightarrow M\times\T
\ee
 ending with it, such that the associated backward orbit $\varphi_L^{t-s}(\gamma_{x,s}^-(s),\dot \gamma_{x,s}^-(s), s)$ has an $\alpha-$limit set $\wt \cA_{x,s}\subset TM\times\T$, which is invariant and graphic over $\cA_{x,s}:=\pi\wt\cA_{x,s}$. Therefore, any critical curve $\wt\gamma_{x,s}^\infty$ in $\cA_{x,s}$ has to be a globally calibrated curve, namely
\[
\wt\cA_{x,s}\subset\wt\cA,\quad(\text{resp. } \cA_{x,s}\subset\cA).
\]
So $\wt\cA\neq \emptyset$.

Recall that any critical curve in $\cA$ is globally calibrated, then due to Proposition \ref{Sec3:pro_differentiable},  that implies for any $(x,\bar{s})\in\cA$, the critical curve $\wt\gamma_{x,s}$ passing it is unique. In other words, $\pi^{-1}:\cA\rightarrow\wt\cA$ is a graph,   and
\[
\dot\gamma_{x,s}(t)=\partial_p H(\mbox{d}u^-(\gamma_{x,s}(t),t),t),\quad \forall\ t\in\R.
\]
 That indicates that $\mbox{d}u^-:\cA\rightarrow TM$ coincides with $\partial_v L\circ (\pi|_{\wt\cA})^{-1}$. 
On the other side, 
$\|\dot{\wt\gamma}_{x,s}(t)\|\leq A<+\infty$ for all $t\in\R$ due to Lemma \ref{Sec3:calibrated dot bounded}, so $\partial_v L\circ (\pi|_{\wt\cA})^{-1}$ has to be Lipschitz. So 
$\wt\cA$ is Lipschitz over $\cA$. This is an analogue of Theorem 4.11.5 of \cite{Fa} and a.4) of \cite{MS}, which is known as {\sf Mather's graph theorem} in more earlier works \cite{Mat} for conservative Hamiltonian systems.

\begin{lem} $\wt\cA$ has an equivalent expression
\be\label{eq:mane-equi}
\wt\cA:=\{(\gamma(t),\dot\gamma(t),\bar t)\in TM\times\T|\;\forall\; a<b\in\R, \gamma \text{ achieves $h_{\alpha}^{a,b}(\gamma(a),\gamma(b))$}\}.\quad
\ee
\end{lem}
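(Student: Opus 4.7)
The plan is to verify both inclusions between the two descriptions of $\wt\cA$. The forward direction, that globally calibrated curves are global minimizers on every $[a,b]$, is essentially bookkeeping: for any competitor $\eta\in C^{ac}([a,b],M)$ with $\eta(a)=\gamma(a)$ and $\eta(b)=\gamma(b)$, the calibration identity \eqref{eq:cal} rewrites $\int_a^b e^{F(\tau)}(L(\gamma,\dot\gamma,\tau)+\alpha)d\tau$ as a difference of $u_\alpha^-$-values at the common endpoints, while the domination inequality \eqref{eq:dom} supplies the same expression as an upper bound for $\int_a^b e^{F(\tau)}(L(\eta,\dot\eta,\tau)+\alpha)d\tau$. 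Hence $\gamma|_{[a,b]}$ realizes $h_\alpha^{a,b}(\gamma(a),\gamma(b))$.

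For the reverse direction, let $\gamma:\R\to M$ minimize $h_\alpha^{a,b}$ for every finite $[a,b]$. A geodesic comparison on each window of unit length (as in Lemma \ref{Sec3:calibrated dot bounded}) gives a uniform bound $|\dot\gamma|\leq \kappa$; together with Lemma \ref{Sec3:inequivality} and \textbf{(H0$^-$)} this makes $\int_{-\infty}^b e^{F(\tau)}(L(\gamma,\dot\gamma,\tau)+\alpha)d\tau$ absolutely convergent, with tail $|\int_{-\infty}^{-T}(\cdots)d\tau|_\gamma|=O(e^{-T[f]})$. The key claim to extract \eqref{eq:cal} on any $[a,b]$ is that $\gamma|_{(-\infty,c]}$ attains the infimum defining $u_\alpha^-(\gamma(c),\bar c)$ for each $c\in\R$; subtracting this identity at $c=a$ and $c=b$ then gives calibration directly.

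To prove the claim, I argue by contradiction. By Lemma \ref{Sec3:calibrated}, a backward calibrated curve $\sigma:(-\infty,b]\to M$ with $\sigma(b)=\gamma(b)$ attains $u_\alpha^-(\gamma(b),\bar b)$. If $\gamma$ fails to attain it, then $\int_{-\infty}^b e^{F(\tau)}(L+\alpha)d\tau|_\sigma<\int_{-\infty}^b e^{F(\tau)}(L+\alpha)d\tau|_\gamma-\delta$ for some $\delta>0$. For large $T$, I splice a minimal geodesic on $[-T,-T+1]$ joining $\gamma(-T)$ to $\sigma(-T+1)$ (of speed at most $\mbox{diam}(M)$) to $\sigma|_{[-T+1,b]}$, producing $\tilde\sigma:[-T,b]\to M$ with $\tilde\sigma(-T)=\gamma(-T)$ and $\tilde\sigma(b)=\gamma(b)$. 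The spliced piece contributes $O(e^{F(-T)})=O(e^{-T[f]})$ to the action, and the tails $\int_{-\infty}^{-T}(\cdots)|_\gamma$ and $\int_{-\infty}^{-T+1}(\cdots)|_\sigma$ are also $O(e^{-T[f]})$, so for $T$ large enough the fixed saving $\delta$ dominates all three error terms and $\int_{-T}^b(\cdots)d\tau|_{\tilde\sigma}<\int_{-T}^b(\cdots)d\tau|_\gamma$, contradicting the global minimality of $\gamma$ on $[-T,b]$. The argument at $c=a$ is identical.

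The main obstacle is precisely this splice-and-cutoff estimate: everything hinges on the exponential smallness of $e^{F(s)}$ as $s\to-\infty$ provided by \textbf{(H0$^-$)} via Lemma \ref{Sec3:inequivality}, which simultaneously makes the cost of the connecting geodesic and both tail integrals negligible compared with $\delta$. Without this decay one would have no mechanism to upgrade finite-interval minimality into the semi-infinite minimality that, together with domination, forces calibration.
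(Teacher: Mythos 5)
Your proof is correct. The forward inclusion is handled exactly as in the paper: calibration \eqref{eq:cal} turns the action of $\gamma$ on $[a,b]$ into a difference of $u_\alpha^-$-values, and domination \eqref{eq:dom} bounds every competitor from below by the same quantity, so $\gamma|_{[a,b]}$ realizes $h_\alpha^{a,b}$.

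For the reverse inclusion you take a slightly different, and arguably more direct, path. The paper first proves the uniform limit $\lim_{s\to-\infty}h_\alpha^{s,t}(z,x)=e^{F(t)}u_\alpha^-(x,t)$ for all $z,x,t$ (one side from domination, the other from a geodesic splice), and then derives calibration by writing $h^{s,b}_\alpha(\gamma(s),\gamma(b))-h^{s,a}_\alpha(\gamma(s),\gamma(a))=\int_a^b e^{F(\tau)}(L+\alpha)\,d\tau$ and letting $s\to-\infty$. You instead prove directly that $\gamma|_{(-\infty,c]}$ attains $u^-_\alpha(\gamma(c),\bar c)$ for every $c$, again by an $e^{-T[f]}$-small geodesic splice plus the $e^{-T[f]}$-small tails, contradicting minimality on $[-T,c]$ if there were any $\delta$-gap; subtracting the resulting identities at $c=a$ and $c=b$ gives calibration immediately. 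The two arguments exploit the same exponential decay $e^{F(s)}\to 0$ as $s\to-\infty$, and your geodesic splice is essentially the same auxiliary construction the paper uses; what differs is the organizing object (you argue about $\gamma$-as-half-line-minimizer, the paper about the Mañé potential). The paper's route has the minor advantage of producing the reusable asymptotic formula for $h_\alpha^{s,t}$ as a by-product; yours is shorter and more self-contained. One item you assert but should spell out is the velocity bound for finite-interval minimizers of $h^{a,b}_\alpha$: the comparison in Lemma \ref{Sec3:calibrated dot bounded} is stated for backward calibrated curves, but the same two-sided geodesic-versus-$\gamma$ action comparison applies verbatim to any $h^{a,b}_\alpha$-minimizer on a unit window, so the step is sound, just worth flagging.
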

\proof
Let $\gamma:\mathbb{R}\to M$ be a globally calibrated curve by $u^-_\alpha$.
Due to (3) and (4) of Theorem \ref{thm:1}, for $a<b\in\mathbb{R}$,
\begin{align*}
	\int^b_ae^{F(\tau)}(L(\gamma,\dot{\gamma},\tau)+\alpha)\mbox{d}\tau&=
e^{F(b)}u_\alpha^-(\gamma(b),b)-e^{F(a)}u_\alpha^-(\gamma(a),a)\\
&\leq h^{a,b}_\alpha(\gamma(a),\gamma(b)).
\end{align*}
Due to the definition of $h^{a,b}_\alpha(\gamma(a),\gamma(b))$, 
we derive $\gamma$ achieves $h^{a,b}_\alpha(\gamma(a),\gamma(b))$ for all $a<b\in\mathbb{R}$.

To prove the lemma, it suffices to show any curve $\gamma:\mathbb{R}\to M$ achieving $h^{a,b}_\alpha(\gamma(a),\gamma(b))$ for all $a<b\in\mathbb{R}$ is a calibrated curve by $u^-_\alpha$.
We claim
\begin{equation}\label{eq:mincal}
	\lim_{s\to-\infty}h^{s,t}_\alpha(z,x)=e^{F(t)}u^-_\alpha(x,t), \ \ \forall x,z\in M,t\in\mathbb{R}.
\end{equation}
Due to (3) of Theorem \ref{thm:1}, for $s<t$,
$$
e^{F(t)}u_\alpha^-(x,t)-h^{s,t}_\alpha(z,x)\leq e^{F(s)}u_\alpha^-(z,s)\to 0, s\to-\infty.
$$
On the other hand, we assume $\gamma_{x,t}$ is a globally calibrated curve by $u^-_\alpha$  with $\gamma_{x,t}(t)=x$ and $s+1<t$.
Let $\beta:[s,s+1]\to M$ be a geodesic with $\beta(s)=z,\beta(s+1)=\gamma_{x,t}(s+1)$ satisfying $|\dot{\beta}|\leq k_1:=\mbox{diam}(M)$.
Then,
\begin{align*}
	h^{s,t}_\alpha(z,x)&\leq\int^{s+1}_se^{F(\tau)}(L(\beta,\dot{\beta},\tau)+\alpha)\mbox{d}\tau+\int^t_{s+1}e^{F(\tau)}(L(\gamma_{x,t},\dot{\gamma}_{x,t},\tau)+\alpha)\mbox{d}\tau\\
	&\leq (C_{k_1}+\alpha)e^{\max f+[f][s]}+e^{F(t)}u^-_\alpha(x,t)-e^{F(s+1)}u^-_\alpha(\gamma_{x,t}(s+1),s+1).
\end{align*}
Hence,
$$
h^{s,t}_\alpha(z,x)-e^{F(t)}u^-_\alpha(x,t)\leq (C_{k_1}+\alpha)e^{\max f+[f][s]}-e^{F(s+1)}u^-_\alpha(\gamma_{x,t}(s+1),s+1).
$$
From $[f]>0$, it follows that the right side of the inequality above tending to $0$, as $s\to -\infty$.
Hence, (\ref{eq:mincal}) holds. Actually, the limit in (\ref{eq:mincal}) is uniform for $x,z\in M$ and $t\in\mathbb{R}$.

If $\gamma$ achieves $h^{a,b}_\alpha(\gamma(a),\gamma(b))$ for $a<b\in\mathbb{R}$, then
$$
h^{s,b}_\alpha(\gamma(s),\gamma(b))-h^{s,a}_\alpha(\gamma(s),\gamma(a))=\int^b_ae^{F(\tau)}(L(\gamma,\dot{\gamma},\tau)+\alpha)\mbox{d}\tau,\forall s<a.
$$
Taking $s\to-\infty$, we derive
$\gamma$ is also a calibrated curve by $u^-_\alpha$.
\qed\medskip

With the help of \eqref{eq:mane-equi}, the following Lemma can be proved:
\begin{lem}[Upper Semi-continuity]\label{lem:semi-con}
The set valued function 
\[
L\in \underbrace{C^{r\geq 2}(TM\times\T,\R)}_{\|\cdot\|_{C^r}}\longrightarrow \wt\cA\subset \underbrace{TM\times\T}_{d_{\cH}(\cdot,\cdot)}
\]
is upper semi-continuous. Here  $\|\cdot\|_{C^r}$ is the $C^r-$norm and $d_{\cH}$ is the {\sf Hausdorff distance}.
\end{lem}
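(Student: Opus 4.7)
The plan is to combine the minimizing characterization \eqref{eq:mane-equi} with continuity of the action functional under perturbation of $L$. Fix $\alpha\in\R$ and consider Tonelli Lagrangians $L_n\to L$ in $C^r$. Suppose $(x_n,v_n,\bar s_n)\in\wt\cA_{L_n}$ converges to some $(x,v,\bar s)\in TM\times\T$; the target is $(x,v,\bar s)\in\wt\cA_L$. By \eqref{eq:mane-equi}, each $(x_n,v_n,\bar s_n)$ sits on a globally calibrated curve $\gamma_n:\R\to M$ for $L_n$, and I would produce the desired globally calibrated curve for $L$ as a subsequential limit of $\{\gamma_n\}$.

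First I would establish a uniform Lipschitz bound on the family $\{\gamma_n\}$. Re-examining the proof of Lemma \ref{Sec3:calibrated dot bounded}, the constant $\kappa_0$ depends only on $[f]$, $k_0$, $\alpha$, the superlinearity constant $C(1)$ and the uniform bound $C_{k_1}$ of $L_n$ over velocities of size $\leq\mathrm{diam}(M)$. Since $L_n\to L$ in $C^r$ within a Tonelli class carrying uniform superlinearity, these quantities may be taken independent of $n$ for $n$ large. Arzel\`a--Ascoli plus a diagonal argument on a compact exhaustion of $\R$ then produces a subsequence, still called $\gamma_n$, converging uniformly on compacta to a Lipschitz curve $\gamma:\R\to M$ with $\gamma(\bar s)=x$. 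Because each $\gamma_n$ solves the Euler--Lagrange equation of $L_n$ and the associated vector field on $TM\times\T$ depends continuously on $L$ in $C^r$, standard ODE continuity yields $\dot\gamma_n\to\dot\gamma$ uniformly on compacta, so in particular $\dot\gamma(\bar s)=v$.

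The second step is to transport the minimization property through the limit. Fix any $a<b$ and any absolutely continuous competitor $\beta:[a,b]\to M$ with $\beta(a)=\gamma(a)$, $\beta(b)=\gamma(b)$. Since $\gamma_n(a)\to\gamma(a)$ and $\gamma_n(b)\to\gamma(b)$, I would build an adjusted curve $\beta_n$ that joins $\gamma_n(a)$ to $\gamma_n(b)$ by prepending and appending short geodesic corrections of length $d(\gamma_n(a),\gamma(a))$ and $d(\gamma_n(b),\gamma(b))$ respectively; the contribution of these corrections to the weighted action is $o(1)$ as $n\to\infty$. Minimality of $\gamma_n$ for $L_n$ on the slightly enlarged interval yields
\[
\int_a^b e^{F(\tau)}(L_n(\gamma_n,\dot\gamma_n,\tau)+\alpha)\,d\tau\leq \int_a^b e^{F(\tau)}(L_n(\beta_n,\dot\beta_n,\tau)+\alpha)\,d\tau+o(1).
\]
Uniform convergence of $(\gamma_n,\dot\gamma_n)$ on $[a,b]$ together with $C^0$ convergence $L_n\to L$ on the relevant compact velocity set allows passage to the limit on both sides, giving the corresponding minimality inequality for $\gamma$ against $\beta$ under $L$. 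Arbitrariness of $(a,b,\beta)$ and \eqref{eq:mane-equi} then give $(x,v,\bar s)\in\wt\cA_L$.

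The main obstacle is the uniform Lipschitz estimate: Lemma \ref{Sec3:calibrated dot bounded} was formulated for a single Lagrangian, and extracting $\kappa_0$ uniformly along $L_n\to L$ requires that the superlinearity constant $C(1)$ and the bound $C_{k_1}$ remain controlled along the perturbation, which is natural within any reasonable Tonelli class but not automatic from bare $C^r$ convergence on the noncompact space $TM\times\T$. Once this uniformity is in hand, the remainder is a soft compactness and continuity argument for the action functional.
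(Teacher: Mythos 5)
Your proposal is correct and follows essentially the same route as the paper: both appeal to the uniform velocity bound from Lemma \ref{Sec3:calibrated dot bounded}, extract a subsequential limit of the globally calibrated curves by Arzel\`a--Ascoli and a diagonal argument, and then pass the global-minimization property \eqref{eq:mane-equi} to the limit by comparing against competitors with short geodesic corrections attached near the endpoints. The only presentational difference is that you obtain $\dot\gamma_n\to\dot\gamma$ via continuous dependence of the Euler--Lagrange field on $L$, whereas the paper passes to the limit on the action via lower semicontinuity; both routes establish the same inequality, and your remark about the uniformity of the Lipschitz constant is a fair flag on a point the paper also treats only briefly.
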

\proof
It suffices to prove that for any $L_n\rightarrow L$ w.r.t. $\|\cdot\|_{C^r}-$norm, the accumulating curve of any sequence of curves $\wt\gamma_n$ in $\cA(L_n)$ should lie in $\cA(L)$.
Due to Lemma \ref{Sec3:calibrated dot bounded}, for any $n\in\Z_+$ such that $\|L_n-L\|_{C^r}\leq 1$, $\wt\cA(L_n)$ is uniformly compact in the phase space. Therefore, for any sequence $\{\wt\gamma_n\}$ each of which is globally minimal, the accumulating curve $\wt\gamma_*$ satisfies
\ben
\int_t^se^{F(\tau)}\big(L(\gamma_*,\dot\gamma_*,\tau)+\alpha\big)\mbox{d}\tau&\leq&\lim_{n\rightarrow+\infty}\int_t^se^{F(\tau)}\big(L_n(\gamma_n,\dot\gamma_n,\tau)+\alpha\big)\mbox{d}\tau\\
&\leq &\lim_{n\rightarrow+\infty}\int_t^se^{F(\tau)}\big(L_n(\eta_n,\dot\eta_n,\tau)+\alpha\big)\mbox{d}\tau
\een
for any Lipschitz continuous $\eta_n:[t,s]\rightarrow M$ ending with $\gamma_n(t)$ and $\gamma_n(s)$. Since for any Lipschitz continuous $\eta:[t,s]\rightarrow M$ ending with $\gamma_*(t)$ and $\gamma_*(s)$, we can find such a sequence $\eta_n:[t,s]\rightarrow M$ converging to $\eta$ uniformly, then we get 
\[
\int_t^se^{F(\tau)}(L(\gamma_*,\dot\gamma_*,\tau)+\alpha)\mbox{d}\tau\leq \inf_{\substack{\eta\in C^{ac}([t,s],M)\\\eta(t)=\gamma_*(t)\\\eta(s)=\gamma_*(s)}}\int_t^se^{F(\tau)}(L(\eta, \dot\eta,\tau)+\alpha)\mbox{d}\tau
\]
for any $t<s\in\R$, which implies $\gamma_*$ satisfies the Euler-Lagrange equation. Due to Theorem \ref{thm:1}, the weak KAM solution $u_*^-$ associated with $L$ is unique, so $\gamma_*$ is globally minimal, then globally calibrated by $u_*^-$, i.e. $\wt\gamma_*\in\cA(L)$.\qed 

\subsection{Mather set in the condition (\textbf{H0}$^-$)} For any globally calibrated curve $\wt\gamma$, we can always find a sequence $T_n>0$, such that a $\varphi_L^t-$invariant measure $\wt\mu$ can be found by
\[
\int_{TM\times\T}f(x,v,t)\mbox{d}\wt\mu=\lim_{n\rightarrow+\infty}\frac{1}{T_n}\int_0^{T_n}f(\gamma,\dot\gamma, t)\mbox{d}t,\quad\forall f\in C_c(TM\times\T,\R).
\]
So the set of $\varphi_L^t-$invariant measures ${\mathfrak M}_L$ is not empty. 
\begin{prop}\label{prop:mat}
For all $\wt\nu\in{\mathfrak M}_L$ and $\alpha\in\R$, we have
\[
\int_{TM\times\T}L+\alpha-f(t)u_\alpha^-d\wt\nu\geq 0.
\]
Besides, 
\[
\int_{TM\times\T}L+\alpha-f(t)u_\alpha^-d \wt\nu= 0 \quad\Longleftrightarrow\quad \text{supp}(\wt\nu)\subset\wt\cA
\]
\end{prop}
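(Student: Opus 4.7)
The plan is to convert the domination property (3) of Theorem~\ref{thm:1} into a pointwise differential inequality along orbits of $\varphi_L^t$, then exploit the $\varphi_L^t$-invariance of $\wt\nu$. Since $u_\alpha^-$ is Lipschitz by Lemma~\ref{lem:lip-dis} and every Euler--Lagrange trajectory $\gamma$ is $C^r$, the composition $\phi(t):=u_\alpha^-(\gamma(t),\bar t)$ is absolutely continuous, hence differentiable at a.e. $t$. Specializing (\ref{eq:dom}) to $[s_1,s_2]=[t,t+h]$, dividing by $h>0$, and letting $h\to 0^+$ at a differentiability point of $\phi$ produces
\[
\tfrac{d}{dt}\bigl[e^{F(t)}\phi(t)\bigr]\le e^{F(t)}\bigl(L(\gamma(t),\dot\gamma(t),t)+\alpha\bigr),
\]
equivalently $\phi'(t)\le L+\alpha-f(t)\phi(t)$ a.e., with equality on an interval $[a,b]$ if and only if (\ref{eq:cal}) holds on $[a,b]$. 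Integrating over $[0,T]$ along any Lipschitz $\gamma$ yields the cocycle inequality
\[
u_\alpha^-(\gamma(T),\bar T)-u_\alpha^-(\gamma(0),0)\le\int_0^T\bigl(L+\alpha-f(t)\,u_\alpha^-(\gamma(t),\bar t)\bigr)\,dt.
\]

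Now apply this cocycle inequality to the orbit $\tau\mapsto\varphi_L^\tau(x,v,\bar s)$ and integrate against $\wt\nu$. The invariance identity $\int u_\alpha^-\!\circ\pi\circ\varphi_L^T\,d\wt\nu=\int u_\alpha^-\!\circ\pi\,d\wt\nu$ forces the left-hand side to vanish, while Fubini combined with invariance turns the right-hand side into $T\int g\,d\wt\nu$, where $g(x,v,\bar t):=L(x,v,t)+\alpha-f(t)u_\alpha^-(x,t)$. Hence $\int g\,d\wt\nu\ge 0$, which is the first assertion.

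For the equivalence, introduce the pointwise nonnegative functional
\[
h_T:=\int_0^T g\circ\varphi_L^\tau\,d\tau-\bigl(u_\alpha^-\!\circ\pi\circ\varphi_L^T-u_\alpha^-\!\circ\pi\bigr)\ge 0,
\]
which vanishes exactly when the forward orbit of the basepoint is calibrated on $[0,T]$. The identity $\int h_T\,d\wt\nu=T\int g\,d\wt\nu$ (same computation as above) shows that if $\int g\,d\wt\nu=0$, then $h_T=0$ $\wt\nu$-a.e. for every $T\in\Q_+$; so on a $\wt\nu$-full-measure set the forward orbit satisfies (\ref{eq:cal}) on each rational interval $[0,T]$, and continuity in $T$ extends this to all $T>0$. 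Translating the basepoint by $\varphi_L^{-s}$ (using invariance of $\wt\nu$) extends the calibration to every $[s,t]\subset\R$, so the orbit is globally calibrated and lies in $\wt\cA$; closedness of $\wt\cA$ as a Lipschitz graph over $\cA$ then upgrades this to $\operatorname{supp}(\wt\nu)\subset\wt\cA$. Conversely, when $\operatorname{supp}(\wt\nu)\subset\wt\cA$ every orbit in the support is globally calibrated, so the cocycle inequality is an equality and integrating yields $0=\int h_T\,d\wt\nu=T\int g\,d\wt\nu$. The main delicate point is the a.e.-to-everywhere passage in $T$ for the equality case, handled via a countable dense family and the continuity of $T\mapsto h_T$; the remainder is algebraic manipulation of the domination and calibration identities from Theorem~\ref{thm:1}.
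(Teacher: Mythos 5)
Your proof is correct and takes a cleaner technical route than the paper's. The paper fixes a single Euler--Lagrange orbit $\gamma$ in $\pi_x\,\mathrm{supp}(\wt\nu)$ and writes $\int f(t)u_\alpha^-\,d\wt\nu$ as a Birkhoff time average along $\gamma$, upper-bounds $u_\alpha^-(\gamma(t),t)$ by the backward action integral along $\gamma|_{(-\infty,t]}$ via the infimum definition \eqref{Sec3:solution}, then integrates by parts in $t$ so that the boundary terms vanish under the Ces\`aro limit and what survives is $\int(L+\alpha)\,d\wt\nu$. You instead integrate the cocycle inequality $u_\alpha^-(\gamma(T),\bar T)-u_\alpha^-(\gamma(0),\bar 0)\le\int_0^T g\circ\varphi_L^\tau\,d\tau$ directly against $\wt\nu$, and Fubini plus $\varphi_L^t$-invariance collapses both sides. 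This buys two things: no ergodic decomposition is needed (the argument applies verbatim to an arbitrary invariant probability measure), and monotonicity is automatic because the pointwise inequality is integrated against a \emph{positive} measure rather than being multiplied termwise by $f(t)$. Indeed, the paper's first displayed inequality passes the bound $u_\alpha^-\le\int_{-\infty}^t e^{F(s)-F(t)}(L+\alpha)\,ds$ under the factor $f(t)$, which reverses direction wherever $f(t)<0$ --- not excluded by \textbf{(H0$^-$)} alone --- so your version sidesteps a genuine delicacy in the paper's write-up. Your treatment of the equivalence, via the nonnegative defect $h_T$, a countable-in-$T$ argument, and basepoint shifts, is the standard route and matches the paper's reliance on calibration; the closedness of $\wt\cA$ you invoke is available from its Lipschitz-graph property together with the stability of globally calibrated curves proved in Lemma~\ref{lem:semi-con}.
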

\begin{proof} For any Euler-Lagrange curve $\gamma:\R\rightarrow M$ contained in $\pi_x\footnote{Here $\pi_x,\pi_t,\pi_u$ is the standard projection to the space $M,\T,\R$ respectively.} \text{supp}(\wt\nu)$, we have 
\ben
& &\int_{TM\times\T} f(t) u_\alpha^-(x,t)\mbox{d}\wt\nu\\
&=&\lim_{T\rightarrow+\infty}\frac1T\int_0^Tf(t)u_\alpha^-(\gamma(t),t)\mbox{d}t\\
&\leq& \lim_{T\rightarrow+\infty}\frac1T\int_0^Tf(t)\int_{-\infty}^t e^{F(s)-F(t)} [L(\gamma(s),\dot\gamma(s),s)+\alpha] \mbox{d}s \mbox{d}t\\
&=&\lim_{T\rightarrow+\infty}\frac1T\int_0^Tf(t)e^{-F(t)}\int_{-\infty}^te^{F(s)} [L(\gamma(s),\dot\gamma(s),s) +\alpha]\mbox{d}s\mbox{d}t\\
&=&\lim_{T\rightarrow+\infty}-\frac1T\int_0^T\Big( \int_{-\infty}^te^{F(s)} [L(\gamma(s),\dot\gamma(s),s)+\alpha] \mbox{d}s\Big)\mbox{d} e^{-F(t)}\\
&=&\lim_{T\rightarrow+\infty}-\frac1T\Big(e^{-F(t)}\int_{-\infty}^te^{F(s)} [L(\gamma(s),\dot\gamma(s),s) +\alpha]ds\Big|_0^T\Big)\\
& &+\lim_{T\rightarrow+\infty}\frac1T\int_0^TL(\gamma(t),\dot\gamma(t),t)+\alpha \mbox{d}t\\
&=&\int_{TM\times\T}L(x,v,t)+\alpha \mbox{d} \wt\nu,\nonumber
\een
which is an equality only when $\gamma$ is a backward calibrated curve of $(-\infty,t]$ for all $t\in\R$, which implies $\gamma$ is globally calibrated.\qed
\end{proof}

Due to this Proposition we can easily show that $\emptyset\neq \wt\cM\subset\wt\cA$. Moreover, as we did for the Aubry set, we can similarly get that $\pi^{-1}:\cM\rightarrow\wt\cM$ is a Lipschitz function.

\subsection{Maximal global attractor in the condition (\textbf{H0}$^-$)}

Since now $[f]>0$ and 
$
\dfrac d{dt}\wh H(x,p,\bar{s},I,u)=-f(t)\wh H(x,p,\bar{s},I,u)
$
due to Remark \ref{rmk:pro},
so for any initial point $(x,p,s,I,u)$, the $\om-$limit of trajectory $\wh \varphi_{ H}^t(x,p,\bar{s},I,u)$ lies in 
\beq
\wh\Sigma_{ H}:=\{{\wh H}(x,p,\bar{s},I,u)=0\}\subset T^*M\times T^*\T\times\R.
\eeq
\begin{lem}\label{lem:layer}
For any point $Z:=\big(x,p,\bar s,\alpha-f(s)u-H(x,p,s),u\big)\in\wh\Sigma_{ H}$ with $u\leq u^-_\alpha(x,s)$, if 
\[
\liminf_{t\rightarrow-\infty}e^{F(t)}\big|\pi_u\wh \varphi_{ H}^t(Z)\big|=0,
\]
then $\pi_x\wh \varphi_{ H}^t(Z)$ is a backward calibrated curve for $t\leq 0$. 
\end{lem}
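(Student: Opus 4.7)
The plan is to show that the last coordinate $w(t):=\pi_u\wh\varphi_H^t(Z)$ along the backward trajectory must coincide with $u_\alpha^-$ evaluated at the projected trajectory $\gamma(t):=\pi_x\wh\varphi_H^t(Z)$, from which the calibration identity then follows immediately. The last equation of \eqref{eq:dis} combined with the Legendre identity $\langle H_p,p\rangle - H = L$ yields $\dot w = L(\gamma,\dot\gamma,t)+\alpha - f(t)w$, equivalently $\frac{d}{dt}\bigl(e^{F(t)}w(t)\bigr) = e^{F(t)}\bigl(L(\gamma,\dot\gamma,t)+\alpha\bigr)$. Integration from $\tau\le s$ up to $s$ produces the fundamental identity
\[
e^{F(s)}\,u \;-\; e^{F(\tau)}\,w(\tau) \;=\; \int_\tau^{s} e^{F(r)}\bigl(L(\gamma,\dot\gamma,r)+\alpha\bigr)\,dr. \qquad(\star)
\]

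Next I would prove the equality $u=u_\alpha^-(x,\bar s)$. Because $[f]>0$ one has $e^{F(\tau)}\to 0$ as $\tau\to-\infty$, and since $u_\alpha^-$ is bounded (Lemma \ref{lem:lip-dis}) this forces $e^{F(\tau)}u_\alpha^-(\gamma(\tau),\bar\tau)\to 0$. The domination inequality of Lemma \ref{Sec3:dominant} applied on $[\tau,s]$ together with $(\star)$ gives
\[
e^{F(s)}u_\alpha^-(x,\bar s) \;-\; e^{F(\tau)}u_\alpha^-(\gamma(\tau),\bar\tau) \;\le\; e^{F(s)}u - e^{F(\tau)}w(\tau).
\]
Choosing a sequence $\tau_n\to-\infty$ along which $e^{F(\tau_n)}|w(\tau_n)|\to 0$ (supplied by the hypothesis) and passing to the limit yields $u_\alpha^-(x,\bar s)\le u$; combined with the standing assumption $u\le u_\alpha^-(x,\bar s)$, equality holds.

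Finally I would bootstrap the same argument from every earlier initial condition. Since $\wh\varphi_H^{\tau-s}(Z)$ is merely a time-shifted initial condition for the same trajectory, the $\liminf$ hypothesis transfers (the weight shifts by a bounded factor because $F(t+(\tau-s))-F(t)=(\tau-s)[f]+O(1)$), while $(\star)$ combined with domination already produces $w(\tau)\le u_\alpha^-(\gamma(\tau),\bar\tau)$; the preceding step then upgrades this to $w(\tau)\equiv u_\alpha^-(\gamma(\tau),\bar\tau)$ for every $\tau\le s$. Substituting this pointwise identity back into $(\star)$ restricted to any subinterval $[\tau_1,\tau_2]\subset(-\infty,s]$ yields the calibration identity \eqref{eq:cal}. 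The delicate step is the second one: we must extract an equality from a one-sided $\liminf$ hypothesis, which works only because the domination inequality is robust under passage to $\liminf$ and no actual limit of $e^{F(t)}w(t)$ is ever needed.
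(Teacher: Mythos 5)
Your proof is correct and follows essentially the same route as the paper's: derive the identity $\tfrac{d}{dr}\bigl(e^{F(r)}w(r)\bigr)=e^{F(r)}\bigl(L(\gamma,\dot\gamma,r)+\alpha\bigr)$ from the $u$-equation of \eqref{eq:dis} plus the Legendre relation, integrate, and play it against the domination inequality of Lemma~\ref{Sec3:dominant} to squeeze $u=u_\alpha^-(x,\bar s)$. You are more careful than the paper at one spot: the paper writes $e^{F(s)}\pi_u Z=\int_{-\infty}^{0}\tfrac{d}{dt}\bigl[e^{F(t+s)}\pi_u\wh\varphi_H^t(Z)\bigr]\,\mathrm{d}t$, an FTC identity that strictly requires the boundary term at $-\infty$ to have a limit equal to $0$, whereas the hypothesis only supplies a $\liminf$; your treatment of extracting a subsequence and combining the two one-sided inequalities correctly closes this small gap, and the bootstrap to $w(\tau)\equiv u_\alpha^-(\gamma(\tau),\bar\tau)$ for all $\tau\le s$ is a clean way to land directly on the calibration identity \eqref{eq:cal}.
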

\proof
From the equation $\dot u=\langle H_p,p\rangle-H+\alpha-f(t)u$, we derive 
\ben
e^{F(s)}\pi_uZ&=&\int_{-\infty}^0 \frac{d}{dt}e^{F(t+s)}\pi_u\wh \varphi_{ H}^t(Z)\mbox{d}t\\
&=&\int_{-\infty}^s e^{F(t)}\big(L(\cL( \varphi_{ H}^{t-s}(x,p,\bar s)))+\alpha\big)\mbox{d}t\leq u_\alpha^-(x,s),
\een
then due to the expression of $u^-_\alpha$ in (\ref{Sec3:solution}), $\pi_x\wh \varphi_{ H}^t(Z)$ is a backward calibrated curve for $t\leq 0$.\qed

This Lemma inspires us to decompose $\wh\Sigma_{ H}$ further:
\[
\left\{
\begin{aligned}
\wh\Sigma_{ H}^-:= \big\{(x,p,\bar{s},\alpha-f(s)u-H(x,p,s), u)\big| u> u^-_\alpha(x,s)\big\},\\
\wh\Sigma_{ H}^0:=\big\{(x,p,\bar{s},\alpha-f(s)u-H(x,p,s), u)\big| u= u^-_\alpha(x,s)\big\},\\
\wh\Sigma_{ H}^+:=\big\{(x,p,\bar{s},\alpha-f(s)u-H(x,p,s), u)\big| u< u^-_\alpha(x,s)\big\}.
\end{aligned}
\right.\]
\begin{lem}
For any $Z=\big(x,p,\bar s,\alpha-f(s)u-H(x,p,s),u\big)\in \wh\Sigma_{ H}$, we  have
\be\label{eq:+0}
& &\partial_t^+\Big(u_\alpha^-(\pi_{x,t}\wh \varphi_{ H}^t(Z))-\pi_u\wh \varphi_{ H}^t(Z)\Big)\\
&\leq& -f(t+s)\Big(u_\alpha^-(\pi_{x,t}\wh \varphi_{ H}^t(Z))-\pi_u\wh \varphi_{ H}^t(Z)\Big).\nonumber
\ee
Consequently, $\lim_{t\rightarrow+\infty}\wh \varphi_{H}^t(Z)\in \wh\Sigma_{ H}^-\cup\wh\Sigma_{ H}^0$.
\end{lem}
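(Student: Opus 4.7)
The plan is to introduce the Lyapunov-type function
\[
\Phi(t):=e^{F(s+t)}\Bigl(u_\alpha^-(\pi_{x,t}\wh\varphi_H^t(Z))-\pi_u\wh\varphi_H^t(Z)\Bigr)
\]
and show it is non-increasing in $t$. Both asserted conclusions will then follow: the displayed differential inequality is exactly what $\partial_t^+\Phi\le 0$ says after expanding by the chain rule, and the statement about the $\omega$-limit set reduces to observing that the prefactor $e^{-F(s+t)}$ goes to zero under {\bf (H0$^-$)}.

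To prove the monotonicity, I set $\gamma(\tau):=\pi_x\varphi_H^\tau(x,p,\bar s)$, which is a $C^r$ curve since $\varphi_H^\tau$ is smooth on $T^*M\times\T$. Applying the domination property (3) of Theorem~\ref{thm:1} to $\gamma$ on $[0,t]$ gives
\[
e^{F(s+t)}u_\alpha^-(\gamma(t),\bar{s+t})-e^{F(s)}u_\alpha^-(x,\bar s)\le \int_0^t e^{F(s+\tau)}\bigl(L(\gamma,\dot\gamma,\bar{s+\tau})+\alpha\bigr)\,d\tau.
\]
On the other hand, the ODE $\dot u=\langle H_p,p\rangle-H+\alpha-f(t)u=L(\gamma,\dot\gamma,\bar{s+t})+\alpha-f(s+t)u$ (via Legendre duality) integrates, after multiplication by $e^{F(s+t)}$, to the \emph{equality}
\[
e^{F(s+t)}\pi_u\wh\varphi_H^t(Z)-e^{F(s)}u=\int_0^t e^{F(s+\tau)}\bigl(L(\gamma,\dot\gamma,\bar{s+\tau})+\alpha\bigr)\,d\tau.
\]
Subtracting the equality from the inequality yields $\Phi(t)\le \Phi(0)$; the same argument applied on an arbitrary sub-interval $[t_1,t_2]$ (shifting the initial time) gives $\Phi(t_2)\le \Phi(t_1)$, i.e.\ $\Phi$ is non-increasing. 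Computing $\partial_t^+\Phi$ via the product rule and rearranging produces the claimed inequality $\partial_t^+(u_\alpha^-\circ\pi_{x,t}\wh\varphi_H^t-\pi_u\wh\varphi_H^t)\le -f(s+t)(u_\alpha^-\circ\pi_{x,t}\wh\varphi_H^t-\pi_u\wh\varphi_H^t)$.

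For the consequence, the bound $\Phi(t)\le\Phi(0)$ reads
\[
u_\alpha^-(\pi_{x,t}\wh\varphi_H^t(Z))-\pi_u\wh\varphi_H^t(Z)\le e^{F(s)-F(s+t)}\bigl(u_\alpha^-(x,\bar s)-u\bigr).
\]
Under {\bf (H0$^-$)} we have $F(s+t)\to+\infty$, so the right-hand side tends to $0$ if $u_\alpha^-(x,\bar s)-u>0$ and is already nonpositive if $u_\alpha^-(x,\bar s)-u\le 0$. In every case $\limsup_{t\to+\infty}\bigl(u_\alpha^-(\pi_{x,t}\wh\varphi_H^t(Z))-\pi_u\wh\varphi_H^t(Z)\bigr)\le 0$, so any accumulation point of $\wh\varphi_H^t(Z)$ as $t\to+\infty$ satisfies $u\ge u_\alpha^-$ and therefore lies in $\wh\Sigma_H^-\cup\wh\Sigma_H^0$.

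The main subtlety I expect is the rigorous manipulation of $\partial_t^+$ when $u_\alpha^-$ is merely Lipschitz rather than $C^1$: one can neither freely apply the chain rule nor invoke $\frac{d}{dt}u_\alpha^-(\gamma(t),\bar{s+t})$ pointwise. My strategy avoids this by working at the integrated level: the domination inequality and the ODE give a one-sided comparison of absolutely continuous functions, and non-increase of their difference is a property preserved by the upper right Dini derivative. Converting monotonicity of $\Phi$ into the pointwise inequality in the lemma is then a direct application of the product/chain rule for the smooth factors $e^{F(s+t)}$ and $\pi_u\wh\varphi_H^t(Z)$, multiplying an upper right Dini derivative of the Lipschitz term, a manipulation that carries through without requiring differentiability of $u_\alpha^-$.
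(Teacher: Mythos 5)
Your proof is correct, and it takes a genuinely different route from the paper. The paper works at the pointwise level: it invokes the local semiconcavity of $u_\alpha^-$ (Theorem~\ref{Sec3:thm_semiconcave2}), takes reachable gradients $(\partial_x^*u_\alpha^-,\partial_t^*u_\alpha^-)\in D^*u_\alpha^-$, bounds $\partial_t^+[u_\alpha^-(x(t),s+t)-u(t)]$ by the maximum over such gradients, applies the Fenchel inequality $\langle\partial_x^*u_\alpha^-,\dot x\rangle\le H(x,\partial_x^*u_\alpha^-,\cdot)+L(x,\dot x,\cdot)$, and uses the viscosity relation $\partial_t^*u_\alpha^-+H(x,\partial_x^*u_\alpha^-,\cdot)+f u_\alpha^-\le\alpha$ to close the estimate. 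You instead work at the integrated level: you pair the domination inequality for $u_\alpha^-$ along the projected trajectory with the exact integration of the $\dot u$--equation (where $\langle H_p,p\rangle-H=L$ because the orbit's $x$- and $p$-components are Legendre conjugate), subtract to conclude that $\Phi(t)=e^{F(s+t)}\bigl(u_\alpha^-\circ\pi_{x,t}\wh\varphi_H^t-\pi_u\wh\varphi_H^t\bigr)$ is non-increasing, and then unwind the $C^1$ prefactor to read off the Dini inequality. Your route is arguably cleaner: it avoids the semiconcavity machinery entirely and sidesteps the (somewhat delicate) chain-rule manipulations with $D^*u_\alpha^-$, deriving the pointwise inequality from a global monotonicity statement rather than the other way around. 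One small point of care you handled correctly: the product-rule identity $\partial_t^+(hG)=h\,\partial_t^+G + h'G$ for $h\in C^1$, $h>0$, and $G$ Lipschitz is what lets you cancel $e^{F(s+t)}$ cleanly; this step would fail if $h$ could vanish, but that never happens here. Both proofs establish the same consequence for the $\omega$-limit set, and your case split on the sign of $u_\alpha^-(x,\bar s)-u$ matches the informal claim in the lemma.
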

\proof
As $\wh \varphi_{ H}^t(Z)=\big(x(t),p(t),\overline{t+s},-f(s+t)u(t)-H(x(t),p(t),s+t),u(t)\big)$, then 
\ben
& &\partial_t^+\big[u_\alpha^-(x(t),s+t)-u(t)\big]\\
&\leq&\max\big\langle \partial_x^* u_\alpha^-(x(t),s+t),\dot x(t)\big\rangle+\partial_t^*u_\alpha^-(x(t),s+t)-\dot u(t)\\
&\leq& \max H(x(t),\partial_x^* u_\alpha^-(x(t),s+t),s+t)+L(x(t),\dot x(t),s+t)\\
& &+\partial^*_tu_\alpha^-(x(t),s+t)-\langle H_p(x(t),p(t),t+s),p(t)\rangle\\
& &+f(t+s)u(t)+H(x(t),p(t),s+t)-\alpha\\
&=&\max H(x(t),\partial_x^* u_\alpha^-(x(t),s+t),s+t)+\partial^*_tu^-_\alpha(x(t),s+t)\\
& &+f(t+s)u(t)-\alpha\\
&\leq &f(t+s)[u(t)-u_\alpha^-(x(t),t+s)]
\een
where  the `max' is about all the element $(\partial_x^* u_\alpha^-(x(t),s+t), \partial_t^* u_\alpha^-(x(t),s+t))$ in $D^*u_\alpha^-(x(t),s+t)$ (see Theorem \ref{thm:reachable deri} for the definition). So $\lim_{t\rightarrow+\infty}\wh \varphi_{ H}^{t}(Z)\in\wh\Sigma_{ H}^-\cup\wh\Sigma_{ H}^0$.\qed

\begin{prop}
$\Om:=\bigcap_{t\geq 0} \wh \varphi_{ H}^t(\wh\Sigma_{ H}^-\cup\wh\Sigma_{ H}^0)$ is the maximal invariant set contained in $\wh \Sigma_{H}^-\cup\wh \Sigma_{ H}^0$.
\end{prop}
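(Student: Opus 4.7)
The plan is to prove the three standard requirements for $\Om$: (a) $\wh\Sigma_H^-\cup\wh\Sigma_H^0$ is forward invariant under $\wh\varphi_H^t$, which makes the family $\{\wh\varphi_H^t(\wh\Sigma_H^-\cup\wh\Sigma_H^0)\}_{t\geq 0}$ nested and gives $\Om$ meaning as a decreasing intersection; (b) $\Om$ is itself $\wh\varphi_H^t$-invariant for every $t\in\R$; and (c) any $\wh\varphi_H^t$-invariant subset of $\wh\Sigma_H^-\cup\wh\Sigma_H^0$ is contained in $\Om$.

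For (a), the key is the differential inequality from the previous lemma applied to $D(t):=u^-_\alpha(x(t),\bar{s+t})-\pi_u\wh\varphi_H^t(Z)$, namely
\[
\partial_t^+ D(t)\leq -f(t+s)\,D(t).
\]
Multiplying by the integrating factor $e^{F(t+s)-F(s)}$ shows that $t\mapsto e^{F(t+s)-F(s)}D(t)$ is nonincreasing in the sense of upper right Dini derivatives, which for locally Lipschitz functions (and $u^-_\alpha$ is Lipschitz by Theorem \ref{thm:1}(2)) implies monotone decrease in the ordinary sense. Consequently $D(0)\leq 0$ forces $D(t)\leq 0$ for every $t\geq 0$, i.e.\ $Z\in\wh\Sigma_H^-\cup\wh\Sigma_H^0$ implies $\wh\varphi_H^t(Z)\in\wh\Sigma_H^-\cup\wh\Sigma_H^0$ for all $t\geq 0$. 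Hence for $0\leq t_1\leq t_2$,
\[
\wh\varphi_H^{t_2}(\wh\Sigma_H^-\cup\wh\Sigma_H^0)=\wh\varphi_H^{t_1}\bigl(\wh\varphi_H^{t_2-t_1}(\wh\Sigma_H^-\cup\wh\Sigma_H^0)\bigr)\subseteq \wh\varphi_H^{t_1}(\wh\Sigma_H^-\cup\wh\Sigma_H^0),
\]
so the family is a decreasing nest.

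For (b), by the nesting just established, for any $s\geq 0$ one has
\[
\wh\varphi_H^{s}(\Om)=\bigcap_{t\geq 0}\wh\varphi_H^{t+s}(\wh\Sigma_H^-\cup\wh\Sigma_H^0)=\bigcap_{t\geq s}\wh\varphi_H^{t}(\wh\Sigma_H^-\cup\wh\Sigma_H^0)=\Om,
\]
using the monotonicity to collapse the tail intersection back to the whole intersection. For $s\geq 0$, if $z\in\wh\varphi_H^{-s}(\Om)$, then $\wh\varphi_H^{s}(z)\in\Om$, so $\wh\varphi_H^{s}(z)\in\wh\varphi_H^{t+s}(\wh\Sigma_H^-\cup\wh\Sigma_H^0)$ for every $t\geq 0$, which upon applying $\wh\varphi_H^{-s}$ and using that $\wh\varphi_H^{t}$ is a diffeomorphism gives $z\in\wh\varphi_H^{t}(\wh\Sigma_H^-\cup\wh\Sigma_H^0)$ for every $t\geq 0$, i.e.\ $z\in\Om$. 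The reverse inclusion $\Om\subseteq\wh\varphi_H^{-s}(\Om)$ is immediate from $\wh\varphi_H^{s}(\Om)=\Om$. Thus $\Om$ is $\wh\varphi_H^t$-invariant for all $t\in\R$.

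For (c), let $K\subseteq \wh\Sigma_H^-\cup\wh\Sigma_H^0$ be any set with $\wh\varphi_H^t(K)=K$ for all $t\in\R$. Then for every $t\geq 0$,
\[
K=\wh\varphi_H^{t}(K)\subseteq \wh\varphi_H^{t}(\wh\Sigma_H^-\cup\wh\Sigma_H^0),
\]
so intersecting over $t\geq 0$ yields $K\subseteq\Om$. This proves maximality.

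The only potentially delicate step is the passage from the Dini inequality for $D$ to genuine monotonicity of $e^{F(\cdot+s)-F(s)}D(\cdot)$; this is where one must invoke that $u^-_\alpha$ is Lipschitz (hence the composition is Lipschitz along the orbit) so that a standard comparison lemma for Dini derivatives of absolutely continuous functions applies. All other steps are formal manipulations of the intersection.
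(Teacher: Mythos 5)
Your proposal is correct and follows essentially the same route as the paper: invoke the differential inequality \eqref{eq:+0} to deduce forward invariance of $\wh\Sigma_H^-\cup\wh\Sigma_H^0$, then exploit the resulting nesting of $\wh\varphi_H^t(\wh\Sigma_H^-\cup\wh\Sigma_H^0)$ to show $\Om$ is invariant and maximal. You merely flesh out the intersection manipulations and the passage from the Dini-derivative inequality to genuine monotonicity, both of which the paper leaves implicit.
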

\begin{proof}
Due to (\ref{eq:+0}), $\wh \Sigma_{ H}^-\cup\wh\Sigma_{ H}^0$ is forward invariant. Besides, any invariant set in $\wh\Sigma_{ H}$ has to lie in $\wh\Sigma_{ H}^-\cup\wh\Sigma_{ H}^0$. So 
$
\Om
$
is the maximal invariant set in $\wh\Sigma_{ H}^-\cup\wh\Sigma_{ H}^0$.\qed
\end{proof}

\begin{lem}
If the $p-$component of $\Om$ is bounded, then the ${u,I}-$components of $\Om$ are  also bounded.
\end{lem}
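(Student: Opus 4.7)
The plan is to reduce the statement to bounding $u$ alone via the energy identity $\wh H\equiv 0$ on $\Om$, and then to analyse the linear non-autonomous ODE satisfied by $u$ along orbits. First I would use $|p|\leq P_0$ on $\Om$ together with the $C^2$ smoothness of $H$ on the compact set $M\times\{|p|\leq P_0\}\times\T$ to conclude that $|H(x,p,t)|$ and $|L(x,H_p(x,p,t),t)|=|\langle H_p,p\rangle-H|$ are uniformly bounded on $\Om$, say by $G>0$. Since $\wh H=I+H+f(t)u-\alpha$ vanishes on $\Om$, the identity $I=\alpha-f(t)u-H(x,p,t)$ gives $|I|\leq|\alpha|+\|f\|_\infty\,|u|+G$, so the $I$-bound will follow immediately once $u$ is bounded. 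A lower bound on $u$ is obtained at once from the inclusion $\Om\subset\wh\Sigma_H^-\cup\wh\Sigma_H^0$, which forces $u\geq u_\alpha^-(x,t)\geq -U$ where $U=\|u_\alpha^-\|_\infty<\infty$ by Lemma \ref{lem:lip-dis}.

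The upper bound on $u$ is extracted from the equation $\dot u+f(t)u=L(x,\dot x,t)+\alpha=:g(t)$ satisfied along any $\Om$-orbit, with $|g|\leq G$. Because $[f]>0$, the homogeneous equation $\dot v+f(t)v=0$ admits only the trivial bi-infinitely bounded solution: $v(t)=v(0)e^{-F(t)}$ blows up as $t\to-\infty$ whenever $v(0)\neq 0$. Hence the inhomogeneous equation has the unique bi-infinitely bounded solution
\[
u_*(t)=\int_{-\infty}^t e^{F(\tau)-F(t)}g(\tau)\,d\tau,\qquad |u_*(t)|\leq G\cdot\frac{e^{2k_0+[f]}}{[f]},
\]
the estimate coming from Lemma \ref{Sec3:inequivality}. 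Any $\Om$-orbit thus has $u(t)=u_*(t)+Ce^{-F(t)}$ for some constant $C$, and the case $C<0$ is excluded immediately: it would force $u(t)\to-\infty$ as $t\to-\infty$, contradicting $u\geq -U$.

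The main obstacle is to rule out $C>0$, equivalently to prove $u\equiv u_*$ along every orbit in $\Om$. I would exploit the maximality and invariance of $\Om$ together with the forward attractivity of $u_*$: an orbit with $C>0$ has $u(t)\to+\infty$ as $t\to-\infty$, so its $u$-component escapes every compact slab while $(x,p,t)$ remains in a compact set, and I would argue that such a trajectory cannot sit inside the intersection $\bigcap_{t\geq0}\wh\varphi_H^t(\wh\Sigma_H^-\cup\wh\Sigma_H^0)$ in a way compatible with the graph-like structure of $u_\alpha^-$ on $\wt\cA$. Once $u\equiv u_*$ is established on $\Om$, the bound $|u|\leq G\,e^{2k_0+[f]}/[f]$ is automatic, and feeding it back into $I=\alpha-f(t)u-H(x,p,t)$ from the first step completes the proof.
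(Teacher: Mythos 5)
Your reduction of the $I$-bound to the $u$-bound via $I=\alpha-f(t)u-H(x,p,t)$ is fine and equivalent in force to the paper's (the paper integrates the $I$-equation directly, but on $\Om\subset\wh\Sigma_H$ this amounts to the same thing). The lower bound $u\geq u_\alpha^-\geq-\|u_\alpha^-\|_\infty$ on $\Om$ and the exclusion of the case $C<0$ are also correct. For the upper bound your route genuinely differs from the paper's: the paper integrates the $u$-equation forward from some $T'$ and argues that the term carrying the initial value decays because $e^{F(T')-F(t+T')}\to 0$; you instead exploit the bi-infinite orbit structure to write $u(t)=u_*(t)+Ce^{-F(t)}$ and try to pin down the constant $C$.

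The step you flag---ruling out $C>0$---is the genuine gap, and it is not closable by the ideas in your sketch or, for that matter, in the paper's argument. An orbit with $C>0$ has $u(t)\to u_*(t)$ (bounded) as $t\to+\infty$ and $u(t)\to+\infty$ as $t\to-\infty$; the only constraint $\Om$ imposes is $u(t)\geq u_\alpha^-(x(t),t)$, which such an orbit satisfies with room to spare. A concrete witness: take $M=\T$, $H(x,p)=\tfrac12p^2$, $f\equiv\lambda>0$, $\alpha=0$; then $u_\alpha^-\equiv 0$, and for each $u_0>0$ the orbit through $(x,0,\bar s,-\lambda u_0,u_0)$ has $p\equiv 0$ and $u(t)=u_0e^{-\lambda t}>0$ for all $t\in\R$, hence lies entirely in $\wh\Sigma_H^-\cup\wh\Sigma_H^0$. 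Thus $\Om\supset\{(x,0,\bar s,-\lambda u,u):u\geq 0\}$, whose $p$-component is bounded but whose $u$- and $I$-components are not. The paper's forward-integration estimate does not escape this either: it controls $\pi_u\wh\varphi_H^{t+T'}(Z_0)$ only once $t$ is large relative to $|\pi_u\wh\varphi_H^{T'}(Z_0)|$, which is precisely the quantity one cannot bound a priori on $\Om$. So your honesty in not papering over the $C>0$ case is warranted, and the ``graph-like structure of $u_\alpha^-$ on $\wt\cA$'' heuristic you gesture at does not supply the missing argument; as written the lemma appears to require either a stronger hypothesis (e.g.\ uniform forward boundedness of $u$ on $\{|p|\leq C'\}\cap(\wh\Sigma_H^-\cup\wh\Sigma_H^0)$) or a reinterpretation of $\Om$ as an $\omega$-limit set rather than the full maximal invariant set.
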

\begin{proof}
It suffices to prove that for any $(x_0,p_0,\bar{t}_0,I_0,u_0)\in T^*M\times T^*\T\times\R$, there exists a time $T(x_0,p_0,\bar{t}_0,I_0,u_0)>0$ such that for any $t\geq T$, 
\beq\label{eq:*}
\big\|\pi_{u,I}\wh\varphi_{ H}^t(x_0,p_0,\bar{t}_0,I_0,u_0)\big\|\leq C \tag{*}
\eeq
for a uniform constant $C=C(\pi_{p}\Om)$. Since $\pi_{p}\Om$ is bounded, due to the definition of $\Om$, for any  $(x_0,p_0,\bar{t}_0,I_0,u_0)\in T^*M\times T^*\T\times\R$, there always exists a time $T'(x_0,p_0,\bar{t}_0,I_0,u_0)>0$ such that for any $t\geq T'$, 
\[
\big\|\pi_p\wh \varphi_{ H}^t(x_0,p_0,\bar{t}_0,I_0,u_0)\big\|\leq C'=\frac32 \text{diam}(\pi_{p}\Om).
\]
On the other side, the $u-$equation of (\ref{eq:dis}) implies that for any $t> 0$,
\ben
& &\big\|\pi_u\wh \varphi_{ H}^{t+T'}(x_0,p_0,\bar{t}_0,I_0,u_0)\big\|\\
&\leq& e^{F(t_0+T')-F(t+T'+t_0)}|\pi_u\wh\varphi_{ H}^{T'}(x_0,p_0,\bar{t}_0,I_0,u_0)|\\
& &+\int_0^te^{F(s+t_0+T')-F(t+t_0+T')}\Big|\langle H_p,p\rangle-H\Big|_{\wh\varphi_{ H}^{s+T'}(x_0,p_0,\bar{t}_0,I_0,u_0)}ds
\een
where the first term of the right hand side will tend to zero as $t\to +\infty$, and the second term has a uniform bound depending only on $[f], C'$. Therefore, there exists a time $T''(x_0,p_0,\bar{t}_0,I_0,u_0)$ such that for any $t\geq T'+T''$, there exists a constant $C''=C''(C',[f])$ such that 
\[
\big\|\pi_u\wh\varphi_{ H}^{t}(x_0,p_0,\bar{t}_0,I_0,u_0)\big\|\leq C''.
\]
Benefiting from the boundedness of $u-$component, we can repeat aforementioned scheme to the $I-$equation of (\ref{eq:dis}), then prove (\ref{eq:*}).\qed
\end{proof}

Once $\Om$ is compact, it has to be the maximal global attractor of $\wh\varphi_{ H}^t$ in the whole phase space $T^*M\times T^*\T\times\R$. 
Then due to Proposition \ref{Sec3:pro_differentiable}, any backward calibrated curve $\gamma_{x,s}^-:(-\infty,s]\rightarrow M$ decides a unique trajectory 
\ben
\wh \varphi_{ H}^t&\Big(&\cL^{-1}(x,\lim_{\varsigma\rightarrow s_-}\dot\gamma_{x,s}^-(\varsigma),s),\alpha-f(s)u_\alpha^-(x,s)\\
& &-H\big(\cL^{-1}(x,\lim_{\varsigma\rightarrow s_-}\dot\gamma_{x,s}^-(\varsigma),s)\big),u_\alpha^-(x,s)\Big)
\een
for $t\in\R$, which lies in $\wh\Sigma_{ H}$. Furthermore, 
\[
\wh\cA:=\Big\{\Big(\cL^{-1}(x,\partial_x u_\alpha^-(x,t),t),\partial_t u_\alpha^-(x,t),u_\alpha^-(x,t)\Big)\Big|(x,t)\in\cA\Big\}\subset\Om
\]
 because $\Om$ is the maximal invariant set in $\wh\Sigma_{ H}$.

\begin{lem}
$\wh\cA$ is the maximal invariant set contained in $\wh\Sigma_{H}^0$.
\end{lem}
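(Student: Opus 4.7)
The plan is to establish the two inclusions $\wh\cA\subset\wh\Sigma_H^0$ and maximality separately, and to deduce the latter from the backward calibration criterion of Lemma \ref{lem:layer} together with Proposition \ref{Sec3:pro_differentiable}.

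First I would check $\wh\cA\subset\wh\Sigma_H^0$ and that $\wh\cA$ is $\wh\varphi_H^t$-invariant. On $\cA$, the weak KAM solution $u_\alpha^-$ is differentiable, and by Proposition \ref{Sec3:pro_differentiable} the corresponding momentum $\partial_xu_\alpha^-$ lifts $\cA$ to the calibrated Lagrangian trajectory and satisfies $\partial_tu_\alpha^-+H(x,\partial_xu_\alpha^-,t)+f(t)u_\alpha^-=\alpha$. This exactly encodes that the $(I,u)$-components of the lift agree with $(\alpha-f(s)u_\alpha^--H,u_\alpha^-)$, i.e.\ the lift lies in $\wh\Sigma_H^0$. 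Invariance follows because the calibrated curves form a $\varphi_L^t$-invariant set and the $(I,u)$-equations of \eqref{eq:dis} are forced by the $(x,p,t)$-dynamics once one sits on the graph of $u_\alpha^-$.

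Next, for maximality, let $\wh\cB\subset\wh\Sigma_H^0$ be an $\wh\varphi_H^t$-invariant set and pick $Z=(x,p,\bar s,I,u)\in\wh\cB$. Invariance gives $\pi_u\wh\varphi_H^t(Z)=u_\alpha^-(\pi_{x,t}\wh\varphi_H^t(Z))$ for every $t\in\R$. Since $u_\alpha^-$ is bounded on $M\times\T$ (Lemma \ref{lem:lip-dis}) and since $[f]>0$ forces $e^{F(t)}\to 0$ as $t\to -\infty$, we obtain
\[
\liminf_{t\to -\infty}e^{F(t)}\,|\pi_u\wh\varphi_H^t(Z)|=0.
\]
Applying Lemma \ref{lem:layer}, the curve $t\mapsto \pi_x\wh\varphi_H^t(Z)$ is a backward calibrated curve on $(-\infty,0]$. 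Because $\wh\cB$ is invariant, the same argument applied to $\wh\varphi_H^{t_0}(Z)$ for any $t_0\in\R$ shows that $\pi_x\wh\varphi_H^t(Z)$ is backward calibrated on every $(-\infty,t_0]$, hence globally calibrated. Therefore $(x,\bar s)\in\cA$.

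It then remains to identify $(p,I,u)$ with the jet data of $u_\alpha^-$ at $(x,\bar s)$. By Proposition \ref{Sec3:pro_differentiable}, $u_\alpha^-$ is differentiable at every interior point of a backward calibrated curve, so it is differentiable at $(x,\bar s)$ (applying the result to a slight time shift, which is admissible since the curve is globally calibrated). The Legendre identity in that Proposition forces $p=\partial_xu_\alpha^-(x,s)$; the viscosity equation \eqref{eq:sta-hj} together with $u=u_\alpha^-(x,s)$ then forces $I=\alpha-f(s)u_\alpha^-(x,s)-H(x,\partial_xu_\alpha^-,s)=\partial_tu_\alpha^-(x,s)$. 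Thus $Z\in\wh\cA$, and $\wh\cB\subset\wh\cA$. The step I expect to be the most delicate is securing differentiability of $u_\alpha^-$ at the endpoint $(x,\bar s)$ itself (rather than only on the open half-line), which is why the argument is framed so the calibrated curve extends strictly beyond $s$ via the invariance of $\wh\cB$ under positive times.
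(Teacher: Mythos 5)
Your proof is correct and takes essentially the same route as the paper: use boundedness of $u_\alpha^-$ on $\wh\Sigma_H^0$ together with $[f]>0$ to get $\liminf_{t\to-\infty}e^{F(t)}|\pi_u\wh\varphi_H^t(Z)|=0$, invoke Lemma~\ref{lem:layer} for backward calibration, and then use invariance of the candidate set to upgrade to global calibration. You spell out the inclusion $\wh\cA\subset\wh\Sigma_H^0$, the endpoint-differentiability point, and the identification of the $(p,I,u)$-jet via Proposition~\ref{Sec3:pro_differentiable}, all of which the paper leaves implicit.
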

\begin{proof}
If $\cI$ is an invariant set contained in $\wh\Sigma_{ H}^0$, then $\pi_u(\wh\varphi_{ H}^t(\cI))$ is always bounded. Due to Lemma \ref{lem:layer}, any trajectory in $\cI$ has to be backward calibrated. As $\cI$ is invariant, any trajectory in it has to be contained in $\wh\cA$. \qed
\end{proof}

\vspace{20pt}

\noindent{\it Proof of Theorem \ref{cor:critical}:} Let $\tilde{\mu}\in \mathfrak{M}_L$ be ergodic, then we can find $(x_0,v_0,t_0)\in TM\times\mathbb{T}$ such that
\ben
& &\int_{TM\times\mathbb{T}}e^{F(t)}(L(x,v,t)+c(H))\mbox{d}\tilde{\mu}\\
&=&\lim_{T\to+\infty}\frac{1}{T}
\int^0_{-T}e^{F(\tau)}(L(\varphi_L^\tau(x_0,v_0,t_0))+c(H))\mbox{d}\tau.
\een
Therefore, for any weak KAM solution $u_c^-:M\times\T\rightarrow\R$ of \eqref{eq:sta-hj2}, we have
\ben
& &e^{F(0)}u_c^-(x_0,t_0)-e^{F(-T)}u_c^-(\pi_{x,t}\varphi_L^{-T}(x_0,v_0,t_0))\\
&\leq&\int^0_{-T}e^{F(\tau)}(L(\varphi_L^\tau(x_0,v_0,t_0))+c(H))\mbox{d}\tau,
\een
which implies
\ben
	& &\lim_{T\to+\infty}\frac{1}{T}\int^0_{-T}e^{F(\tau)}(L(\varphi_L^\tau(x_0,v_0,t_0))+c(H))\mbox{d}\tau\\
	&=&\lim_{T\to+\infty}\frac{1}{T}
(e^{F(0)}u_c^-(x_0,t_0)-e^{F(-T)}u_c^-(\pi_{x,t}\varphi_L^{-T}(x_0,v_0,t_0))=0.
\een
Hence,
$$
\int_{TM\times\mathbb{T}}e^{F(t)}(L(x,v,t)+c(H))\mbox{d}{\tilde{\mu}}\geq 0.
$$
That further implies 
$$
\frac{\inf_{\mu\in\mathfrak M_L}\int_{TM\times\mathbb{T}}e^{F(t)}L(x,v,t)\mbox{d}\tilde{\mu}}{
\int_{0}^1e^{F(\tau)}\mbox{d}\tau}\geq-c(H).
$$
On the other side, for any $(x,0)\in M\times\mathbb{T}$ fixed, the backward calibrated curve $\gamma^-_{x,0}:(-\infty,0]\to M$ satisfies
\begin{align*}
e^{F(0)}u_c(\gamma^-_{x,0}(0),0)&-e^{F(-n)}u_c(\gamma^-_{x,0}(-n),-n)\\
&=\int^{0}_{-n}e^{F(\tau)}(L(\gamma^-_{x,0}(\tau),\dot{\gamma}^-_{x,0}(\tau),\tau)+ c)\mbox{d}\tau
\end{align*}
for any $n\in\Z_+$.
By the {\sf Resize Representation Theorem}, the time average w.r.t. $\gamma^-_{x,0}|_{[-n,0]}:[-n,0]\rightarrow M$ decides a sequence of Borel probability measures ${\mu}_n$. Due to Lemma \ref{Sec3:calibrated dot bounded}, we can always find a subsequence  $\{{\tilde{\mu}}_{n_k}\}$ converging to a unique  Borel probability measure ${\tilde{\mu}}^*$, i.e.
\ben
\int_{TM\times\mathbb{T}}g(x,v,t)\mbox{d}{\tilde{\mu}}^*
&=&\lim_{k\to\infty}\int_{TM\times\mathbb{T}}g(x,v,t)\mbox{d}{\tilde{\mu}}_{n_k}\\
&=&\lim_{k\to\infty}\frac{1}{n_k}\int^0_{-n_k}g(\gamma^-_{x,0}(\tau),\dot{\gamma}^-_{x,0}(\tau),\bar{\tau})\mbox{d}\tau
\een
for any $g\in C_c(TM\times\T,\R)$. Besides, we can easily prove that $\tilde{\mu}^*\in\mathfrak M_L$ and 
%
%
%
\ben
	& &\int_{TM\times\mathbb{T}}e^{F(t)}(L(x,v,t)+c(H))\mbox{d}{\tilde{\mu}}^*\\
	&=&\lim_{k\to\infty}\frac{1}{n_k}\int^{0}_{-n_k}e^{F(\tau)}(L(\gamma^-_{x,0}(\tau),\dot{\gamma}^-_{x,0}(\tau),\tau)+ c(H))\mbox{d}\tau\\
	&=&\lim_{k\to\infty}\frac{1}{n_k}\bigg(u^-_c(\gamma^-_{x,0}(0),0)-u^-_c(\gamma^-_{x,0}(-n_k),-n_k)\bigg)=0.
\een
Then,
$$
-c(H)=\frac{\inf_{ \tilde{\mu}\in\mathfrak M_L}\int_{TM\times\mathbb{T}}e^{F(t)}L(x,v,t)\mbox{d}{\tilde{\mu}}}{\int_0^1e^{F(\tau)}\mbox{d}\tau}.
$$
Gathering all the infimum of the right side of previous equality, we get a set of  Mather measures $\mathfrak M_m$. Due to the {\sf Cross Lemma} in \cite{Mat}, the Mather set 
\[
\wt\cM:=\overline{\bigcup_{\tilde{\mu}\in\mathfrak M_m} \text{supp}(\tilde{\mu})}
\]
is a Lipschitz graph over $\cM:=\pi\wt\cM$.
%
%
%
\qed




\section{Convergence of parameterized viscosity solutions}\label{s4}

In this section we deal with the convergence of weak KAM solution $u_\dt^-$ for system (\ref{eq:ham-par}) as $\dt\rightarrow 0_+$. 
Recall that $[f_0]=0$ and 
\[
f_1(t):=\lim_{\dt\rightarrow 0_+}\dfrac{f_\dt(t)-f_0(t)}{\dt}>0,
\]
 there must exist a $\dt_0>0$ such that 
\[
f_{\dt}(t)>f_0(t),\quad\forall\ t\in\T
\]
for all $\dt\in[0,\dt_0]$. 
Due to Theorem \ref{cor:1} there exists a unique $c(H)$, such that the weak KAM solutions $u_0^-$ of (\ref{eq:hj-criti}) with $\alpha=c(H)$ exist. 
For each $(x,t)\in M\times\mathbb{R}$ and $s<t$, 
the {\sf Lax-Oleinik operator} 
$$
T_s^{\delta,-}(x,t)=\inf_{\substack{\gamma\in C^{ac}([s,t],M)\\\gamma(t)=x}}\int^t_se^{F_\delta(\tau)-F_\delta(t)}\big(L(\gamma(\tau),\dot{\gamma}(\tau),\tau)+c(H)\big)\mbox{d}\tau
$$
is well defined, of which the following Lemma holds:

\begin{lem}
For each $\delta\geq 0$ and $T_s^{\delta,-}(x,t)$ converges uniformly to $u^-_{\delta}(x,t)$ on each compact subset of $M\times \mathbb{R}$ as $s\to-\infty$.
\end{lem}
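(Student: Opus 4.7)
The plan is to sandwich $T_s^{\delta,-}(x,t)$ by $u_\delta^-(x,t)\pm e^{F_\delta(s)-F_\delta(t)}\|u_\delta^-\|_\infty$, using the calibration property in one direction and the domination property in the other, and then to observe that for $\delta>0$ the prefactor $e^{F_\delta(s)-F_\delta(t)}$ decays exponentially as $s\to-\infty$. The decay follows from Lemma \ref{Sec3:inequivality} applied to $f_\delta$, once one notes that the assumptions $f_\delta>f_0$ on $\T$ and $[f_0]=0$ imply $[f_\delta]>0$. The boundedness of $u_\delta^-$ itself is inherited from the a priori estimate recorded at the beginning of Sec.~\ref{s2}.

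For the upper bound I would invoke Theorem \ref{thm:1} to produce a backward calibrated curve $\gamma_{x,t}^-:(-\infty,t]\to M$ of $u_\delta^-$ ending at $x$. Its restriction to $[s,t]$ is an admissible competitor in the infimum defining $T_s^{\delta,-}(x,t)$, and the calibration identity \eqref{eq:cal} rewrites the action along this restriction as $u_\delta^-(x,t)-e^{F_\delta(s)-F_\delta(t)}u_\delta^-(\gamma_{x,t}^-(s),s)$. Bounding $|u_\delta^-(\gamma_{x,t}^-(s),s)|\le\|u_\delta^-\|_\infty$ then yields
$$T_s^{\delta,-}(x,t)\;\le\;u_\delta^-(x,t)+e^{F_\delta(s)-F_\delta(t)}\|u_\delta^-\|_\infty.$$
For the lower bound, domination \eqref{eq:dom} applied to an arbitrary admissible $\gamma:[s,t]\to M$ with $\gamma(t)=x$ gives
$$u_\delta^-(x,t)\;\le\;e^{F_\delta(s)-F_\delta(t)}u_\delta^-(\gamma(s),s)+\int_s^t e^{F_\delta(\tau)-F_\delta(t)}\bigl(L(\gamma,\dot\gamma,\tau)+c(H)\bigr)\mbox{d}\tau.$$
Taking the infimum over $\gamma$ and again using $|u_\delta^-(\gamma(s),s)|\le\|u_\delta^-\|_\infty$ produces $u_\delta^-(x,t)\le e^{F_\delta(s)-F_\delta(t)}\|u_\delta^-\|_\infty+T_s^{\delta,-}(x,t)$.

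Combining the two estimates yields $|T_s^{\delta,-}(x,t)-u_\delta^-(x,t)|\le e^{F_\delta(s)-F_\delta(t)}\|u_\delta^-\|_\infty$. By Lemma \ref{Sec3:inequivality} the prefactor is bounded by $e^{2k_0-(t-s-1)[f_\delta]}$, which tends to $0$ as $s\to-\infty$ uniformly in $t$ on any bounded interval, so the desired uniform convergence on compact subsets of $M\times\R$ follows at once.

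The main obstacle for transporting this scheme to the critical case $\delta=0$ is that $[f_0]=0$ makes the prefactor $e^{F_0(s)-F_0(t)}$ merely bounded rather than vanishing, so the contraction argument collapses. In that case one would instead appeal to the Peierls-barrier representation of $u^-_{z,\bar{\varsigma}}$ from Theorem \ref{cor:1}: the defining property of $c(H)$ rules out closed loops of strictly negative action, so along quasi-minimal chains the remainder terms $e^{F_0(s)}u_0^-(\gamma(s),s)$ stabilize up to subsequences, and convergence must be extracted by a diagonal procedure rather than by a geometric contraction.
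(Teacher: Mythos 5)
Your argument for $\delta>0$ is essentially the paper's own: take the backward calibrated curve of $u_\delta^-$ restricted to $[s,t]$ as a competitor in the infimum defining $T_s^{\delta,-}$ and use calibration to get the upper bound, then use domination (applied by the paper to a Tonelli minimizer $\gamma_0$ of $T_s^{\delta,-}$, and by you directly to an arbitrary competitor before taking infimum — a cosmetic difference) to get the lower bound, arriving in both cases at $|T_s^{\delta,-}(x,t)-u_\delta^-(x,t)|\le e^{F_\delta(s)-F_\delta(t)}\max|u_\delta^-|$ and invoking $[f_\delta]>0$ for the exponential decay of the prefactor.

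Your caveat about $\delta=0$ is well placed and worth emphasizing: despite the lemma being stated for every $\delta\ge 0$, the paper's own proof closes the argument only through the bound $e^{F_\delta(s)-F_\delta(t)}\max u_\delta^-$, and when $[f_0]=0$ the function $F_0$ is $1$-periodic, so that prefactor is bounded away from $0$ and the displayed inequality does not force convergence. The paper does not address this case separately, and in fact in the critical case there is an additional ambiguity — weak KAM solutions of \eqref{eq:sta-hj2} are not unique and the construction in Theorem \ref{cor:1} depends on a base point $(z,\bar{\varsigma})$, so it is not even clear a priori which $u_0^-$ the semigroup should select. The sketched alternative you give for $\delta=0$ (``the remainder terms stabilize along subsequences'') is not an argument, but this is a harmless omission: in the remainder of Section \ref{s4} the lemma is invoked only for $\delta\in(0,\delta_0]$ (see the proof of Lemma \ref{lem:equi-lip}), so the statement as used is exactly what your proof establishes.
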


\proof
Let $\gamma^-_{\delta,x,t}:(-\infty,t]\to M$ be a calibrated curve  of $u_\delta^-(x,t)$. Then,
$$
e^{F_\delta(t)}u_\delta^-(x,t)=e^{F_\delta(s)}u_\delta^-(\gamma^-_{\delta,x,t}(s),s)+\int^t_se^{F_\delta(\tau)}(L(\gamma^-_{\delta,x,t},\dot{\gamma}^-_{\delta,x,t},\tau)+c(H))\mbox{d}\tau
$$
and
$$
e^{F_\delta(t)}T_s^{\delta,-}(x,t)\leq \int^t_se^{F_\delta(\tau)}(L(\gamma^-_{\delta,x,t},\dot{\gamma}^-_{\delta,x,t},\tau)+c(H))\mbox{d}\tau.
$$
Then,
\begin{equation}\label{eq:5-58}
	T_s^{\delta,-}(x,t)-u_\delta^-(x,t)\leq-e^{F_\delta(s)-F_\delta(t)}u^-_\delta(\gamma^-_{\delta,x,t}(s),s).
\end{equation}
On the other hand, let $\gamma_0:[s,t]\to M$ be a minimizer of $T_s^{\delta,-}(x,t)$. Then,
$$
e^{F_\delta(t)}T_s^{\delta,-}(x,t)=\int^t_se^{F_\delta(\tau)}(L(\gamma_0(\tau),\dot{\gamma}_0(\tau),\tau)+c(H))\mbox{d}\tau
$$
and
$$
e^{F_\delta(t)}u^-_{\delta}(x,t)-e^{F_\delta(s)}u^-_\delta(\gamma_0(s),s)\leq \int^t_se^{F_\delta(\tau)}(L(\gamma_0,\dot{\gamma}_0,\tau)+c(H))\mbox{d}\tau.
$$
Hence,
\begin{equation}\label{eq:5-59}
	u^-_\delta(x,t)-T_s^{\delta,-}(x,t)\leq e^{F_\delta(s)-F_\delta(t)}u_\delta^-(\gamma_0(s),s).
\end{equation}
From (\ref{eq:5-58}) and (\ref{eq:5-59}), it follows 
$$
|u^-_\delta(x,t)-T_s^{\delta,-}(x,t)|\leq e^{F_\delta(s)-F_\delta(t)}\max u^-_\delta,
$$
which means $T_s^{\delta,-}(x,t)$ converges uniformly to $u_\delta^-(x,t)$ on each compact subset of $M\times\mathbb{R}$.
\qed

%

\begin{lem}\label{lem:equi-lip}
$u_\dt^-:M\times\T\rightarrow\R$ are equi-bounded and equi-Lipschitz w.r.t. $\dt\in(0,\dt_0]$.
\end{lem}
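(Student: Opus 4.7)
The equi-Lipschitz property follows by tracking the $\dt$-dependence in the constants from Lemma \ref{lem:lip-dis} and Lemma \ref{Sec3:calibrated dot bounded}, while equi-boundedness requires comparison with the weak KAM solution $u_0^-$ of (HJ$_0$) provided by Theorem \ref{cor:1}.

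\emph{Equi-Lipschitz.} I would apply Lemma \ref{lem:lip-dis} with $f = f_\dt$ and $\alpha = c(H)$. Its constants $\rho_*,\rho_0^*$ are controlled by $\|f_\dt\|_\infty$, $k_0^{(\dt)} := \max_{s\in[0,2]}|F_\dt(s)|$, $[f_\dt]$, the Tonelli constants of $L$, and the velocity bound $\kappa_0^{(\dt)}$ of backward calibrated curves supplied by Lemma \ref{Sec3:calibrated dot bounded}. The uniform convergence $f_\dt\to f_0$ makes $\|f_\dt\|_\infty$, $k_0^{(\dt)}$, $[f_\dt]$ uniformly bounded in $\dt\in(0,\dt_0]$. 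Crucially, the proof of Lemma \ref{Sec3:calibrated dot bounded} only uses positivity of $e^{F_\dt}$ on a unit interval to locate a time $s_0$ with $|\dot\gamma^-(s_0)|\le C_{k_1}+C(1)$, and then propagates via \eqref{eq:e-l}; neither step degenerates as $[f_\dt]\to 0$. Hence $\kappa_0^{(\dt)}$ is uniform, and so is the Lipschitz bound.

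\emph{Equi-boundedness (upper).} Fix $(x,t)\in M\times\T$ and let $\gamma_0:(-\infty,t]\to M$ be a calibrated curve of $u_0^-$ ending at $x$ (Theorem \ref{cor:1}(3)). Using $\gamma_0$ as a test curve in the variational definition of $u_\dt^-$,
\[
u_\dt^-(x,t)\leq \int_{-\infty}^t e^{F_\dt(s)-F_\dt(t)}\bigl(L(\gamma_0,\dot\gamma_0,s)+c(H)\bigr)\,ds.
\]
Differentiating the calibration identity \eqref{eq:cal-c} for $u_0^-$ along $\gamma_0$ yields, a.e.\ in $s$,
\[
L(\gamma_0,\dot\gamma_0,s)+c(H)=\tfrac{d}{ds}\bigl[u_0^-(\gamma_0(s),s)\bigr]+f_0(s)\,u_0^-(\gamma_0(s),s).
\]
Substituting and integrating by parts (the boundary term at $-\infty$ vanishes since $e^{F_\dt(s)}\to 0$ by $[f_\dt]>0$ and $u_0^-$ is bounded) gives
\[
u_\dt^-(x,t)\leq u_0^-(x,t)-\int_{-\infty}^t \bigl(f_\dt-f_0\bigr)(s)\,e^{F_\dt(s)-F_\dt(t)}\,u_0^-(\gamma_0(s),s)\,ds.
\]
The decisive cancellation is $\|f_\dt-f_0\|_\infty=O(\dt)$ (from the right-derivative hypothesis \eqref{eq:1-jet}) against $\int_{-\infty}^t e^{F_\dt(s)-F_\dt(t)}\,ds=O(1/[f_\dt])=O(1/\dt)$, since $[f_\dt]=\dt[f_1]+o(\dt)$ with $[f_1]>0$ (by Fatou applied to the pointwise limit). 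Hence the error is $O(1)$ uniformly in $\dt$. For the lower bound, I would dually pick a calibrated curve $\gamma_\dt^-$ of $u_\dt^-$ (Theorem \ref{thm:1}(4)) and set $v(s):=u_0^-(\gamma_\dt^-(s),s)$, which is Lipschitz. The domination of $u_0^-$ (Theorem \ref{cor:1}(2)) along $\gamma_\dt^-$ yields $L(\gamma_\dt^-,\dot\gamma_\dt^-,s)+c(H)\geq v'(s)+f_0(s)v(s)$ a.e., and inserting this into the identity $u_\dt^-(x,t)=\int_{-\infty}^t e^{F_\dt(s)-F_\dt(t)}(L+c(H))\,ds$ and integrating by parts gives the matching lower estimate $u_\dt^-(x,t)\geq u_0^-(x,t)-O(1)$.

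\emph{Main obstacle.} The heart of the argument is the cancellation between $|f_\dt-f_0|\sim\dt$ and $1/[f_\dt]\sim 1/\dt$; this is exactly where the positivity $f_1>0$ is essential (otherwise the remainder integral blows up, matching the classical loss of the comparison principle flagged in the Remark). A secondary technicality is justifying the integration by parts when $u_0^-$ is only Lipschitz, which is handled by working with the composition $w(s)=u_0^-(\gamma_0(s),s)$ (resp.\ $v(s)=u_0^-(\gamma_\dt^-(s),s)$) as a Lipschitz, hence a.e.\ differentiable, real-valued function.
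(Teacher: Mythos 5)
Your route for equi-boundedness---comparing $u_\dt^-$ directly against the weak KAM solution $u_0^-$ by substituting the calibration identity for $u_0^-$ and integrating by parts---is genuinely different from the paper's, which works with the finite-time Lax--Oleinik operator $T_s^{\delta,-}$ (which vanishes at time $s$), locates the last zero crossing $s_0$, and on the interval $(s_0,t]$ where the sign is constant replaces $f_\dt$ by $f_0$ using only $f_\dt>f_0$; one then integrates against $e^{F_0}$, whose exponent is uniformly bounded since $[f_0]=0$, so the factor $1/[f_\dt]$ never appears. Your approach is closer in spirit to the later Proposition \ref{prop:leq}, and it is a reasonable alternative, but as written it has two gaps.

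First, you cannot put equi-Lipschitzness before equi-boundedness. In the proof of Lemma \ref{lem:lip-dis} the quantity shown to be uniformly Lipschitz in $t$ is $e^{F(t)}u_\alpha^-(x,t)$; passing to $u_\alpha^-(x,\cdot)$ uses $\partial_t u_\alpha^-=e^{-F}\partial_t\bigl(e^{F}u_\alpha^-\bigr)-f(t)u_\alpha^-$, so $\rho_0^*$ contains a term proportional to $\|f\|_\infty\cdot\|u_\alpha^-\|_\infty$. Your list of controlling quantities (which you assert are uniformly bounded) omits $\|u_\dt^-\|_\infty$, and the only a priori bound on $\|u_\dt^-\|_\infty$ available before proving equi-boundedness is the one from Lemma \ref{Sec3:inequivality}(3), which scales like $1/[f_\dt]\to\infty$. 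This is precisely why the paper establishes equi-boundedness first and then says ``by a similar approach of the proof of Lemma \ref{lem:lip-dis}'' to get equi-Lipschitz.

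Second, your cancellation argument estimates the error term by $\|f_\dt-f_0\|_\infty\cdot\int_{-\infty}^t e^{F_\dt(s)-F_\dt(t)}\,ds$ and asserts $\|f_\dt-f_0\|_\infty=O(\dt)$. But (\ref{eq:1-jet}) only gives the right derivative pointwise in $t$ (together with uniform convergence $f_\dt\to f_0$); it does not yield a uniform $O(\dt)$ rate. The repair, which you have all the ingredients for, is to exploit $f_\dt-f_0>0$: bound the error integral by splitting $(-\infty,t]$ into unit intervals and using $\int_0^1(f_\dt-f_0)\,ds=[f_\dt]-[f_0]=[f_\dt]$ (by $1$-periodicity), together with $e^{F_\dt(s)-F_\dt(t)}\leq e^{2k_0^{(\dt)}+[f_\dt]}e^{-n[f_\dt]}$ for $s\in[t-n-1,t-n]$; then the sum $[f_\dt]\sum_{n\geq 0}e^{-n[f_\dt]}$ is bounded uniformly as $[f_\dt]\to 0_+$. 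So the cancellation is real, but its correct source is $\int_0^1(f_\dt-f_0)=[f_\dt]$ against the geometric series $\sum e^{-n[f_\dt]}\sim 1/[f_\dt]$, not a uniform $L^\infty$ rate. Note, by contrast, that the paper's sign-splitting argument needs only the qualitative inequality $f_\dt>f_0$ and never invokes $1/[f_\dt]$ at all.
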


\proof
To show $u_{\delta}^-$ are equi-bounded from below, it suffices to show 
\[
\{T_s^{\delta,-}(x,t)|(x,t)\in M\times [0,1],s\leq 0,\delta\in(0,\delta_0]\}
\]
 is bounded from below. Let $\gamma_0:[s,t]\to M$ be a minimizer of $T_s^{\delta,-}(x,t)$, $u_\delta(\tau):=T_s^{\delta,-}(\gamma_0(\tau),\tau)$, and $\tilde{u}_\delta(\tau):=e^{F_\delta(\tau)}u_\delta(\tau), \tau\in[s,t]$. Then,
$$
\frac{\mbox{d}\tilde{u}_\delta(\tau)}{\mbox{d}\tau}=e^{F_\delta(\tau)}(L(\gamma_0(\tau),\dot{\gamma}_0(\tau),\tau)+c(H)).
$$
Hence,
$$
\frac{\mbox{d}u_\delta(\tau)}{\mbox{d}\tau}=L(\gamma_0(\tau),\dot{\gamma}_0(\tau),\tau)+c(H)-f_\delta(\tau)u_\delta(\tau).
$$
We could assume $T_s^{\delta,-}(x,t)<0$ for some $\delta\in(0,\dt_0]$, $(x,t)\in M\times [0,1],s\leq 0$, otherwise $0$ is a uniform lower bound of $\{T_s^{\delta,-}(x,t)|(x,t)\in M\times[0,1],s\leq 0,\dt\in(0,\dt_0]\}$.
Note that $u_\delta(\cdot)$ is continuous and $u_\delta(s)=0$. There exists $s_0\in[s,t)$ such that $u_\delta(s_0)=0$ and $u_\delta(\tau)<0,\tau\in(s_0,t]$.  From $f_\delta>f_0$, it follows that
$$
\frac{\mbox{d}u_\delta(\tau)}{\mbox{d}\tau}\geq L(\gamma_0(\tau),\dot{\gamma}_0(\tau),\tau)+c(H)-f_0(\tau)u_\delta(\tau),\tau\in[s_0,t].
$$
Hence,
$$
\frac{\mbox{d}}{\mbox{d}\tau}\big(e^{F_0(\tau)}u_\delta(\tau)\big)\geq e^{F_0(\tau)}(L(\gamma_0(\tau),\dot{\gamma}_0(\tau),\tau)+c(H)),
$$
where $F_0(\tau)=\int^\tau_0f_0(\sigma)\mbox{d}\sigma$.
Integrating on $[s_0,t]$, it holds that
\begin{equation}\label{eq:lowerbd}
	e^{F_0(t)}\cdot u_\delta(t)\geq\int^t_{s_0}e^{F_0(\tau)}(L(\gamma_0(\tau),\dot{\gamma}_0(\tau),\tau)+c(H))\mbox{d}\tau.
\end{equation}
Let $\beta:[t,t+2-\overline{t-s_0}]\to M$ be a geodesic with $\beta(t)=\gamma_0(t),\beta(t+2-\overline{t-s_0})=\gamma_0(s_0)$, and 
$$
|\dot{\beta}(\tau)|=\frac{d(\gamma_0(s_0),\gamma_0(t))}{2-\overline{t-s_0}}\leq \mbox{diam}(M)=:k_1.
$$
 Due to the definition of $c(H)$ in (\ref{eq:def_critical}), we derive
\begin{align*}
&\int^t_{s_0}e^{F_0(\tau)}(L(\gamma_0(\tau),\dot{\gamma}_0(\tau),\tau)+c(H))\mbox{d}\tau\\
&+\int^{t+2-\overline{t-s_0}}_te^{F_0(\tau)}(L(\beta(\tau),\dot{\beta}(\tau),\tau)+c(H))\mbox{d}\tau\geq 0.
\end{align*}
Note that 
\ben
& &\int^{t+2-\overline{t-s_0}}_te^{F_0(\tau)}(L(\beta(\tau),\dot{\beta}(\tau),\tau)+c(H))\mbox{d}\tau\\
&\leq&\int^{t+2-\overline{t-s_0}}_te^{F_0(\tau)}(C_{k_1}+c(H))\mbox{d}\tau
\leq 2(C_{k_1}+c(H))e^{\max_{t\in\T} F_0(t)}.
\een
Hence,
$$
\int^t_{s_0}e^{F_0(\tau)}(L(\gamma_0(\tau),\dot{\gamma}_0(\tau),\tau)+c(H))\mbox{d}\tau
\geq -2(C_{k_1}+c(H))e^{\max F_0}.
$$
Combining (\ref{eq:lowerbd}), we derive
$$
u_\delta(t)\geq -2 |C_{k_1}+c(H)| e^{\max F_0-\min F_0}.
$$

Next, we prove $u_{\delta}^-(x,t)$ are equi-bounded from above. It suffices to show 
$\{T^{\delta,-}_s(x,t)|(x,t)\in M\times [0,1],s\leq 0,\dt\in(0,\dt_0]\}$ is bounded from above.  We could assume $T^{\delta,-}_s(x,t)>0$ for some $\delta\in(0,\dt_0]$, $(x,t)\in M\times [0,1],s\leq 0$, otherwise $0$ is a uniform upper bound of $\{T^{\delta,-}_s(x,t)|(x,t)\in M\times[0,1],s\leq 0,\dt\in(0,\dt_0]\}$.

Let $u_0^-(x,t)$ be a weak KAM solution of
$$
\partial_tu+H(x,\partial_xu,t)+f_0(t)u=c(H),
$$
 and $\gamma^-_{x,t}:(-\infty,t]\to M$ be a calibrated curve of $u_0^-(x,t)$. Let
 $$
 v_\delta(\tau):=T^{\delta,-}_s(\gamma^-_{x,t}(\tau),\tau),\tau\in[s,t].
 $$
Then
\ben
& &\frac{e^{F_\delta(\tau+\Delta \tau)}v_\delta(\tau+\Delta \tau)-e^{F_\delta(\tau)}v_\delta(\tau)}{\Delta \tau}\\
&\leq&\frac{1}{\Delta \tau}\int^{\tau+\Delta \tau}_{\tau}e^{F_\delta(\sigma)}(L(\gamma^-_{x,t}(\sigma),\dot{\gamma}^-_{x,t}(\sigma),\sigma)+c(H))\mbox{d}\sigma.
\een
Note that
\begin{align*}
	&\varlimsup_{\Delta \tau\to 0}\frac{e^{F_\delta(\tau+\Delta \tau)}v_\delta(\tau+\Delta \tau)-e^{F_\delta(\tau)}v_\delta(\tau)}{\Delta \tau}\\
&=\varlimsup_{\Delta \tau\to 0}\frac{e^{F_\delta(\tau+\Delta\tau)}v_\delta(\tau+\Delta \tau)-e^{F_\delta(\tau+\Delta\tau)}v_\delta(\tau)+e^{F_\delta(\tau+\Delta\tau)}v_\delta(\tau)-e^{F_\delta(\tau)}v_\delta(\tau)}{\Delta \tau}\\
&=e^{F_\delta (\tau)}\varlimsup_{\Delta \tau\to 0}\bigg(\frac{v_\delta(\tau+\Delta\tau)-v_\delta(\tau)}{\Delta \tau}\bigg)+e^{F_\delta(\tau)}f_\delta(\tau)v_\delta(\tau).
\end{align*}
Hence,
$$
\varlimsup_{\Delta \tau\to 0}\bigg(\frac{v_\delta(\tau+\Delta\tau)-v_\delta(\tau)}{\Delta \tau}\bigg)\leq L(\gamma^-_{x,t}(\tau),\dot{\gamma}^-_{x,t}(\tau),\tau)+c(H)-f_\delta(\tau)v_\delta(\tau).
$$
Since $v_\delta(s)=0$ and $v_\delta(\tau)$ is continuous,
 there exists $s_1\in[s,t)$ such that $v_\delta(s_1)=0$ and $v_\delta(\tau)>0,\tau\in(s_1,t]$.

For $\tau\in (s_1,t]$,
\begin{align*}
	\varlimsup_{\Delta \tau\to 0}\bigg(\frac{v_\delta(\tau+\Delta\tau)-v_\delta(\tau)}{\Delta \tau}\bigg)&\leq L(\gamma^-_{x,t}(\tau),\dot{\gamma}^-_{x,t}(\tau),\tau)+c(H)-f_\delta(\tau)v_\delta(\tau)\\
	&\leq L(\gamma^-_{x,t}(\tau),\dot{\gamma}^-_{x,t}(\tau),\tau)+c(H)-f_0(\tau)v_\delta(\tau).
\end{align*}
Then,
\ben
& &\varlimsup_{\Delta \tau\to 0}\bigg(\frac{e^{F_0(\tau+\Delta\tau)}v_\delta(\tau+\Delta\tau)-e^{F_0(\tau)}v_\delta(\tau)}{\Delta \tau}\bigg)\\
&\leq& e^{F_0(\tau)}(L(\gamma^-_{x,t}(\tau),\dot{\gamma}^-_{x,t}(\tau),\tau)+c(H)).
\een
From $v_\delta(s_1)=0$, it follows that
\begin{align*}
	e^{F_0(t)}v_\delta(t)&\leq\int^t_{s_1}e^{F_0(\tau)}(L(\gamma^-_{x,t},\dot{\gamma}^-_{x,t},\tau)+c(H))\mbox{d}\tau\\
	&=e^{F_0(t)}u_0^-(x,t)-e^{F_0(s_1)}u_0^-(\gamma^-_{x,t}(s_1),s_1).
\end{align*}
Then,
$$
v_\delta(t)\leq 2\max|u_0^-|\cdot e^{\max F_0-\min F_0}.
$$

Note that $u_\delta^-(x,t)$ is equi-bounded. By a similar approach of the proof of Lemma \ref{lem:lip-dis}, we derive that $u_\delta^-$ is equi-Lipschitz. 
\qed

\begin{lem}\label{lem:com-cal}
For any $\dt\in(0,\delta_0]$ and any $(x,\bar{s})\in M\times\T$, the backward calibrated curve $\gamma_{\dt,x,s}^-:(-\infty,s]\rightarrow M$ associated with $u_\dt^-$ has a uniformly bounded velocity, i.e. there exists a constant $K>0$, such that 
$$
|\dot\gamma_{\dt,x,s}^-(t)|\leq K, \quad \forall \dt\in(0,1]\mbox{ and } t\in(-\infty,s).
$$ 
\end{lem}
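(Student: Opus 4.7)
The plan is to follow the scheme of Lemma~\ref{Sec3:calibrated dot bounded} verbatim, but to track every estimate with an eye on uniformity in $\delta\in(0,\delta_0]$. Three ingredients supply the needed uniformity: (i) by Lemma~\ref{lem:equi-lip}, $\{u_\dt^-\}_{\dt\in(0,\dt_0]}$ is equi-bounded; (ii) since $f_\dt\to f_0$ uniformly, $\|f_\dt\|_{C^0}$ is uniformly bounded, so for any $\tau_1<\tau_2$ with $\tau_2-\tau_1\leq 1$ one has $|F_\dt(\tau_2)-F_\dt(\tau_1)|\leq M_0$ for some $\delta$-independent $M_0$; consequently $e^{F_\dt}$ admits $\delta$-uniform upper and lower bounds on every unit interval; (iii) the Tonelli constants $C(k),\,C_k$ introduced at the start of Section~\ref{s2} depend only on $L$, hence are $\delta$-independent.

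First I would fix $(x,s)\in M\times\T$ and any $[s_1,s_2]\subset(-\infty,s]$ with $s_2-s_1=1$. The calibration identity $(3)$ of Theorem~\ref{cor:1} applied to $u_\dt^-$ yields
$$
\int_{s_1}^{s_2}e^{F_\dt(\tau)}\bigl(L(\gamma^-_{\dt,x,s},\dot\gamma^-_{\dt,x,s},\tau)+c(H)\bigr)\,\mathrm{d}\tau
=e^{F_\dt(s_2)}u_\dt^-(\gamma^-_{\dt,x,s}(s_2),s_2)-e^{F_\dt(s_1)}u_\dt^-(\gamma^-_{\dt,x,s}(s_1),s_1),
$$
whose right side is bounded uniformly in $\dt$ by (i)--(ii). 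Comparing $\gamma^-_{\dt,x,s}|_{[s_1,s_2]}$ with a geodesic $\beta:[s_1,s_2]\to M$ of speed at most $\mathrm{diam}(M)=:k_1$ joining its endpoints bounds the same integral from above by $(C_{k_1}+c(H))\int_{s_1}^{s_2}e^{F_\dt(\tau)}\,\mathrm{d}\tau$, again uniformly in $\dt$. Plugging the superlinear lower bound $L\geq|v|-C(1)$ into the left side produces
$$
\int_{s_1}^{s_2}e^{F_\dt(\tau)}\,\bigl|\dot\gamma^-_{\dt,x,s}(\tau)\bigr|\,\mathrm{d}\tau\leq K_1,
$$
and (ii) converts this into $\int_{s_1}^{s_2}|\dot\gamma^-_{\dt,x,s}(\tau)|\,\mathrm{d}\tau\leq K_2$ with $K_2$ independent of $\dt,x,s$. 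By continuity of $\dot\gamma^-_{\dt,x,s}$, some $s_0\in(s_1,s_2)$ then satisfies $|\dot\gamma^-_{\dt,x,s}(s_0)|\leq K_2$.

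Finally, since $\gamma^-_{\dt,x,s}$ solves \eqref{eq:e-l} with damping coefficient $f_\dt$ of uniformly controlled $C^0$-norm, the resulting second-order ODE has $\dt$-uniform coefficients on the compact phase region defined by the $C^r$-bounds on $L$. Combined with the completeness assumption \textbf{(H4)}, a Gronwall estimate propagates the bound at the witness time $s_0$ to every $t\in(-\infty,s]$ in a $\dt$-uniform way, producing the desired constant $K$. The main subtlety is precisely this last propagation step: one must verify that as $\delta$ varies the Euler--Lagrange flow does not amplify velocities in a $\dt$-dependent manner. This reduces to a Gronwall argument controlled by $\|f_\dt\|_{C^0}$ together with the uniform Tonelli constants; the analogous closing remark in the proof of Lemma~\ref{Sec3:calibrated dot bounded} then transfers verbatim, once the uniformity of every constant in $\dt$ has been recorded.
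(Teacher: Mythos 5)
Your proof follows the same two-step scheme as the paper: find a witness time $s_0$ in each unit interval where $|\dot\gamma^-_{\dt,x,s}(s_0)|\leq C_{k_1}+C(1)$ via the calibration/domination sandwich (in which the $u^-_\delta$-terms telescope away, so your invocation of the equi-boundedness from Lemma~\ref{lem:equi-lip} is in fact unnecessary), and then propagate the bound over a time window of length at most $1$ via the Euler--Lagrange flow, uniformly in $\delta$ because $\|f_\delta\|_{C^0}$ is uniformly controlled. Aside from the harmless mis-citation of Theorem~\ref{cor:1} (for $\delta>0$ one has $[f_\delta]>0$, so Theorem~\ref{thm:1} is the relevant statement) and the phrasing of the last step as a Gronwall estimate rather than the paper's continuous-dependence-plus-compactness argument, this is the paper's proof.
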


\proof
By a similar way in the proof of Lemma \ref{Sec3:calibrated dot bounded}, there exists $s_0$ in each interval with length 1, such that
$$
|\dot{\gamma}^-_{\delta,x,s}(s_0)|\leq C_{k_1}+C(1),
$$
where $k_1=\mbox{diam}(M)$. 
Note that $f_{\delta}$ depends continuously on $\delta$ and is 1-periodic. We derive the Lagrangian flow $(\gamma^-_{\delta,x,s}(\tau),\dot{\gamma}^-_{\delta,x,s}(\tau),\tau)$ is 1-periodic and depends continuously on the parameter $\delta$. Hence, there exists $K>0$ depends only on $L, k_1$, and $\delta_0$,  such that $|\dot{\gamma}^-_{\delta,x,s}|<K$.\qed

\begin{prop}\label{prop:geq}
For any ergodic measure $\tilde{\mu}\in\mathfrak M_m(0)$ and any $0<\dt\leq \dt_0$, we have
\be
\int_{TM\times\T}e^{F_0(t)}\frac{f_\dt(t)-f_0(t)}{\dt}u_\dt^-(x,t)\mbox{d}\tilde{\mu}(x,v,t)\leq 0.
\ee
\end{prop}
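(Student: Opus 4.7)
The plan is to integrate the $f_\dt$-domination inequality for $u_\dt^-$ along a generic orbit of $\tilde\mu$, transfer the weights from $F_\dt$ to $F_0$, and then apply Birkhoff's ergodic theorem; periodicity of $F_0$ (arising from $[f_0]=0$) is what keeps the relevant boundary term under control.

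Fix an ergodic $\tilde\mu\in\mathfrak M_m(0)$ together with a $\tilde\mu$-generic initial point $(x_0,v_0,t_0)$, and set $(\gamma(\tau),\dot\gamma(\tau),\bar\tau):=\varphi_L^\tau(x_0,v_0,t_0)$. Since $f_1>0$ implies $[f_\dt]>0$ for $\dt\in(0,\dt_0]$, Theorem \ref{thm:1} applies to the $\dt$-system with $\alpha=c(H)$ and yields $u_\dt^-\prec_{f_\dt}L+c(H)$. Feeding this domination inequality into the curve $\gamma$ and using that both $\gamma$ and $u_\dt^-$ are Lipschitz, one obtains (for a.e.\ $\tau$)
\[
f_\dt(\tau)\,u_\dt^-(\gamma(\tau),\tau)+\frac{d}{d\tau}u_\dt^-(\gamma(\tau),\tau)\le L(\gamma,\dot\gamma,\tau)+c(H).
\]

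Multiplying by $e^{F_0(\tau)}$ and adding and subtracting $e^{F_0(\tau)}f_0(\tau)u_\dt^-$ recombines the last two terms of the left-hand side into $\tfrac{d}{d\tau}\bigl[e^{F_0(\tau)}u_\dt^-(\gamma(\tau),\tau)\bigr]$, giving
\[
e^{F_0(\tau)}(f_\dt-f_0)(\tau)\,u_\dt^-(\gamma(\tau),\tau)+\frac{d}{d\tau}\bigl[e^{F_0(\tau)}u_\dt^-(\gamma(\tau),\tau)\bigr]\le e^{F_0(\tau)}\bigl(L+c(H)\bigr).
\]
Integrating over $[0,T]$, dividing by $T$, and letting $T\to\infty$: the boundary contribution $\tfrac1T\bigl[e^{F_0(T)}u_\dt^-(\gamma(T),T)-u_\dt^-(\gamma(0),0)\bigr]$ vanishes because $F_0$ is $1$-periodic and $u_\dt^-$ is bounded by Lemma \ref{lem:equi-lip}, while Birkhoff's theorem converts the two time averages into $\tilde\mu$-integrals.

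Invariance of $\tilde\mu$ together with $\dot t=1$ forces its $\T$-marginal to be Lebesgue, hence $\int e^{F_0(t)}d\tilde\mu=\ell$; combining this with \eqref{eq:mea-var} one gets $\int e^{F_0(t)}(L+c(H))d\tilde\mu=0$, so the above passage to the limit yields $\int e^{F_0(t)}(f_\dt-f_0)u_\dt^-\,d\tilde\mu\le 0$, and dividing by $\dt>0$ is the claim. The main obstacle is controlling the boundary term as $T\to\infty$, and this is exactly where the hypothesis $[f_0]=0$ is essential: without the $1$-periodicity of $F_0$, the factor $e^{F_0(T)}$ could grow exponentially in $T$ and the argument would collapse.
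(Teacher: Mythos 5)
Your proof is correct and follows essentially the same route as the paper's: pick an orbit $\gamma$ in $\text{supp}(\tilde\mu)\subset\cM(0)$, turn the $f_\dt$-domination of $u_\dt^-$ into a differential inequality along $\gamma$, integrate over $[0,T]$, kill the boundary term using equi-boundedness of $u_\dt^-$ and $1$-periodicity of $F_0$, and then pass to $T\to\infty$ via Birkhoff together with $\int e^{F_0}\bigl(L+c(H)\bigr)\,d\tilde\mu=0$. The only cosmetic difference is that the paper obtains your pointwise inequality by integration by parts plus Fenchel's inequality and the viscosity subsolution property, whereas you invoke the domination $u_\dt^-\prec_{f_\dt}L+c(H)$ directly; these are equivalent, and your variant even sidesteps the paper's appeal to directional differentiability of $u_\dt^-$ along $\gamma$ by working with the full Lipschitz derivative $\tfrac{d}{d\tau}u_\dt^-(\gamma(\tau),\tau)$ instead.
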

\begin{proof}
Since $\{u_\dt^-\}_{\dt\in(0,\dt_0]}$ is uniformly bounded and $[f_0]=0$, then 
\[
\lim_{T\rightarrow+\infty}\frac 1T\int_0^Tu_\dt^-(\gamma(t),t)\mbox{d} e^{F_0(t)}=\int_{TM\times\T}u_\dt^-(x,t)f_0(t)e^{F_0(t)}\mbox{d}\tilde{\mu}(x,v,t)
\]
for any regular curve $\wt\gamma(t)=(\gamma(t),\bar{t}):t\in\R\rightarrow M\times\T$ contained in $\cM(\dt)$. Due to Proposition \ref{Sec3:pro_differentiable},
\ben
& &\frac 1T\int_0^Tu_\dt^-(\gamma(t),t)\mbox{d} e^{F_0(t)}\\
&=&\frac 1Tu_\dt^-(\gamma(t),t) e^{F_0(t)}\Big|_0^T-\frac 1T\int_0^T  e^{F_0(t)}\big[\partial_t u_\dt^-(\gamma(t),t)+\langle \dot\gamma(t),\partial_xu_\dt^-(\gamma(t),t)\rangle\big]\mbox{d}t
\een
and 
\ben
& &\frac 1T\int_0^T  e^{F_0(t)}\big[\partial_t u_\dt^-(\gamma(t),t)+\langle \dot\gamma(t),\partial_xu_\dt^-(\gamma(t),t)\rangle\big]\mbox{d}t\\
&\leq&\frac 1T\int_0^T  e^{F_0(t)}\big[L(\gamma,\dot\gamma,t)+H(\gamma(t),\partial_x u_\dt^-(\gamma(t),t),t)+\partial_t u_\dt^-(\gamma(t),t)\big]\mbox{d}t\\
&\leq&\frac 1T\int_0^T  e^{F_0(t)}\big[L(\gamma,\dot\gamma,t)+c(H)-f_\dt(t)u_\dt^-(\gamma(t),t) \big]\mbox{d}t,
\een
by taking $T\rightarrow +\infty$ and dividing both sides by $\dt$ we get the conclusion.\qed
\end{proof}

\begin{defn}
Let's denote by $\cF_-$ the set of all viscosity subsolutions $\om:M\times\T\rightarrow \R$ of (\ref{eq:hj-par}) with $\dt=0$ such that 
\be
\int_{TM\times\T} f_1(t)\om(x,t)e^{F_0(t)}\mbox{d}\tilde{\mu}\leq 0,\quad\forall\ \tilde{\mu}\in\mathfrak M_m(0).
\ee
\end{defn}
\begin{lem}
The set $\cF_-$ is uniformly bounded from above, i.e.
\[
\sup\{u(x)|\ \forall\ x\in M,\ u\in\cF_-\}<+\infty.
\]
\end{lem}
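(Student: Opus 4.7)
The plan is to combine the strict positivity of $f_1$ on $\T$, a mean-value argument against any single fixed Mather measure, and the domination property of subsolutions to propagate a pointwise upper bound at one point to a uniform upper bound on all of $M\times\T$.

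First I would verify that under the hypotheses of Theorem \ref{thm:3} the weight $e^{F_0(t)}f_1(t)$ admits a positive lower bound $c_0>0$ on the compact torus $\T$. This uses continuity of $f_1$ (inherited from the uniform convergence $f_\dt\to f_0$ and the existence of the right derivative of $f_\dt$ at $\dt=0$) together with the pointwise positivity $f_1(t)>0$ and continuity/$1$-periodicity of $F_0$. Fix once and for all a Mather measure $\tilde\mu_0\in\mathfrak M_m(0)$; it exists by Theorem \ref{cor:critical} and its support is a compact subset of $TM\times\T$.

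Next, take any $\omega\in\cF_-$. Being a viscosity subsolution of \eqref{eq:hj-par} with $\dt=0$ means $\omega\prec_{f_0}L+c(H)$ by the footnote following Theorem \ref{thm:1}, and the same argument as in Lemma \ref{lem:lip-dis} makes $\omega$ Lipschitz (with a Lipschitz constant depending only on $L$, $f_0$, $c(H)$), hence continuous. The defining inequality
\[
\int_{TM\times\T} e^{F_0(t)} f_1(t)\, \omega(x,t)\, d\tilde\mu_0 \;\leq\; 0
\]
then forces the existence of a point $(x_\omega,v_\omega,t_\omega)\in\mathrm{supp}\,\tilde\mu_0$ with $\omega(x_\omega,t_\omega)\leq 0$: otherwise $\omega\geq m>0$ throughout the compact $\mathrm{supp}\,\tilde\mu_0$ and the left-hand side would be $\geq m c_0 \tilde\mu_0(TM\times\T)>0$, a contradiction.

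Finally, I propagate this pointwise bound using domination. For any $(y,\bar t)\in M\times\T$, lift $t_\omega$ and choose $T\in[1,2]$ so that $\overline{t_\omega+T}=\bar t$; connect $x_\omega$ to $y$ by a geodesic $\gamma:[t_\omega,t_\omega+T]\to M$ with $|\dot\gamma|\leq \mathrm{diam}(M)$. Then
\[
e^{F_0(t_\omega+T)}\omega(y,\bar t) - e^{F_0(t_\omega)}\omega(x_\omega,t_\omega)
\;\leq\; \int_{t_\omega}^{t_\omega+T} e^{F_0(\tau)}\bigl(L(\gamma,\dot\gamma,\tau)+c(H)\bigr)\,d\tau,
\]
and since $\omega(x_\omega,t_\omega)\leq 0$, the right-hand side is bounded by a constant depending only on $L$, $f_0$, $c(H)$ and $\mathrm{diam}(M)$ (using the upper bound on $L$ for velocities $\leq\mathrm{diam}(M)$ and the boundedness of $e^{F_0}$ on $\T$). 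Dividing by $e^{F_0(t_\omega+T)}\geq e^{\min_\T F_0}>0$ yields a uniform upper bound $\omega(y,\bar t)\leq C$ independent of $\omega\in\cF_-$ and of $(y,\bar t)$.

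The main obstacle I anticipate is obtaining the uniform lower bound $c_0>0$ on $e^{F_0}f_1$: once the continuity of $f_1$ on $\T$ is granted this is routine, but if $f_1$ were only pointwise defined one would need to extract continuity from the uniform convergence of $f_\dt\to f_0$ and the hypothesis that the right derivative at $\dt=0$ exists. Everything else is a standard mean-value plus Lipschitz-propagation argument.
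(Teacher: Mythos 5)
Your proof is correct and follows the same two-step strategy as the paper: use the defining integral constraint against a fixed Mather measure to force every $\omega\in\cF_-$ to be nonpositive somewhere (the paper bounds $\min_{M\times\T}e^{F_0}\omega$ directly by averaging against $f_1\,d\tilde{\mu}$; you extract a point of $\mathrm{supp}\,\tilde{\mu}_0$ by compactness, which is the same idea), and then propagate this to a uniform upper bound on all of $M\times\T$. The paper packages the propagation step by invoking the uniform Lipschitzness of $e^{F_0}\omega$ for all dominated functions (Lemma 10 of \cite{CIM}) and bounding $\max-\min$, whereas you run the domination inequality explicitly along a geodesic from the chosen base point; the underlying estimate is identical.
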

\begin{proof}
By an analogy of Lemma 10 of \cite{CIM}, all the functions in the set 
\[
\Big\{ e^{F_0(t)}\om:M\times\T\rightarrow \R\Big|\om\prec_{f_0} L+c(H)\Big\}
\]
are uniformly Lipschitz with a Lipschitz constant $\kappa>0$. 
For any $\tilde{\mu}\in\mathfrak{M}_m(0)$ and $u\in\cF_-$ 
\ben
\min_{(x,t)\in M\times\T}u(x,t)e^{F_0(t)}
&=&\frac{\int_{TM\times\T} f_1(t)\min_{(x,t)\in M\times\T} u(x,t) e^{F_0(t)}\mbox{d}\tilde{\mu}}{\int_{TM\times\T}f_1(t) \mbox{d}\tilde{\mu} }\\
&=&\frac{\int_{TM\times\T} f_1(t) \min_{(x,t)\in M\times\T} u(x,t)e^{F_0(t)} \mbox{d}\tilde{\mu}}{\int_0^1f_1(t)\mbox{d}t }\\
&\leq& \dfrac{\int_{M\times\T}f_1(t) u(x,t)e^{F_0(t)} \mbox{d}\tilde{\mu}}{\int_0^1 f_1(t)  \mbox{d}t }\leq 0.
\een
Then,
\ben
\max_{(x,t)\in M\times\T}u(x,t) e^{F_0(t)}&\leq& \max_{(x,t)\in M\times\T}u(x,t) e^{F_0(t)}-\min_{(x,t)\in M\times\T}u(x,t) e^{F_0(t)}\\
&\leq& \kappa\ \text{diam}(M\times\T)<+\infty.
\een
As a result, 
\[
\max_{(x,t)\in M\times\T} u(x,t)\leq \frac{\max_{(x,t)\in M\times\T}u(x,t) e^{F_0(t)}}{\min_{t\in\T} e^{F_0(t)}}<+\infty
\]
so we finish the proof.\qed
\end{proof}
As $\cF_-$ is now upper bounded, we can define a supreme subsolution by
\be\label{eq:def-1}
u_0^*:=\sup_{u\in \cF_-} u.
\ee
Later we will see that this is indeed a viscosity solution of (\ref{eq:hj-par}) for $\dt=0,\alpha=c(H)$ and is the unique accumulating function of $u_\dt^-$ as $\dt\rightarrow 0_+$.

\begin{prop}\label{prop:leq}
For any  $\dt>0$, any viscosity subsolution $\om:M\times\T\rightarrow\R$ of (\ref{eq:hj-par}) with $\dt=0,\alpha=c(H)$ and any point $(x,s)\in M\times\T$, there exists a $\varphi_{L}^t-$backward invariant 
finite measure $\tilde{\mu}_{x,s}^\dt:TM\times\T\rightarrow\R$ such that 
\be
u_\dt^-(x,s)\geq \om(x,s)-\int_{TM\times\T}\om(y,t)e^{F_0(t)}f_1(t)d\tilde{\mu}_{x,s}^\dt(y,v_y,t)
\ee 
where
\ben
& &\int_{TM\times\T}g(y,t)d\tilde{\mu}_{x,s}^\dt(y,v_y,t)\\
&:=&\int_{-\infty}^s\frac{g(\gamma_{\dt,x,s}^-(t),t)\cdot \frac{\mbox{d}}{\mbox{d}t}(e^{F_\delta(t)}-e^{F_0(t)})}{f_1(t)}\mbox{d}t,\ \forall g\in C(M\times\T,\R).
\een
\end{prop}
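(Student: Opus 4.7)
\smallskip
\noindent\textbf{Proof proposal.} The plan is to extract the desired inequality by comparing the calibration identity for $u_\delta^-$ against the domination inequality for $\omega$, both evaluated along the backward calibrated curve $\gamma_{\delta,x,s}^-$, and then to isolate the measure $\tilde\mu_{x,s}^\delta$ through a careful integration by parts. Since $[f_\delta]>[f_0]+\delta[f_1]/2>0$ for $\delta$ small, the calibration gives the absolutely convergent representation
$$e^{F_\delta(s)}u_\delta^-(x,s)=\int_{-\infty}^se^{F_\delta(\tau)}\bigl(L(\gamma_\delta,\dot\gamma_\delta,\tau)+c(H)\bigr)d\tau,$$
because $e^{F_\delta(\sigma)}u_\delta^-(\gamma_\delta(\sigma),\sigma)\to 0$ as $\sigma\to-\infty$ (using Lemma~\ref{lem:equi-lip}). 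On the other hand, $\omega\prec_{f_0} L+c(H)$ translates along $\gamma_\delta$ into the domination
$$e^{F_0(s)}\omega(x,s)-e^{F_0(\sigma)}\omega(\gamma_\delta(\sigma),\sigma)\le\int_\sigma^se^{F_0(\tau)}(L+c(H))d\tau.$$

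The computational heart of the argument is the splitting $e^{F_\delta}=e^{F_0}+(e^{F_\delta}-e^{F_0})$ inside the calibration integral. Substituting the domination bound into the $e^{F_0}$-piece yields a term that already produces $\omega(x,s)$ (after rescaling by the appropriate exponential factors). The remaining $(e^{F_\delta}-e^{F_0})(L+c(H))$ contribution is the one to be converted into the measure. Using the subgradient form $\frac{d}{d\tau}(e^{F_0}\omega(\gamma_\delta(\tau),\tau))\le e^{F_0(\tau)}(L+c(H))$, valid a.e.\ along $\gamma_\delta$ because $\omega$ is Lipschitz, I would perform an integration by parts that moves the derivative onto the factor $e^{F_\delta}-e^{F_0}$, producing precisely $(e^{F_\delta}-e^{F_0})'(\tau)\cdot e^{F_0(\tau)}\omega(\gamma_\delta(\tau),\tau)$ as the new integrand, which is the density $f_1\cdot$ (density of $\tilde\mu_{x,s}^\delta$) weighted by $e^{F_0}\omega$. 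Sending $\sigma\to-\infty$ then exploits the exponential decay of $e^{F_\delta(\sigma)}$ (from $[f_\delta]>0$) to kill the boundary residues, while the algebraic identity $(e^{F_\delta}-e^{F_0})(\tau)=O(\delta)e^{F_0(\tau)}$ guarantees that the products involving $\omega(\gamma_\delta(\sigma),\sigma)$ remain bounded in a manner compatible with the IBP.

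Backward $\varphi_L^t$-invariance of $\tilde\mu_{x,s}^\delta$ is automatic because its support is the backward orbit $\{(\gamma_\delta(\tau),\dot\gamma_\delta(\tau),\bar\tau):\tau\le s\}$, and the density is prescribed in the time variable alone, so the pushforward by $\varphi_L^{-t}$ for $t\ge 0$ coincides with the defining integral. Finiteness of $\tilde\mu_{x,s}^\delta$ is checked from $(e^{F_\delta}-e^{F_0})'=(f_\delta-f_0)e^{F_\delta}+f_0(e^{F_\delta}-e^{F_0})$ where both summands decay like $e^{F_\delta}\cdot O(\delta)$ as $\tau\to-\infty$, divided by the strictly positive periodic function $f_1$, making the total variation integrable on $(-\infty,s]$ after splitting the non-decaying periodic piece $f_0 e^{F_0}=\frac12(e^{2F_0})'$ and exploiting that it is a total derivative of a periodic function.

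\smallskip
\noindent\textbf{Main obstacle.} The key difficulty is that, unlike $e^{F_\delta}$, the weight $e^{F_0}$ does not decay at $-\infty$ but only stays bounded-and-periodic, so the boundary term $e^{F_0(\sigma)}\omega(\gamma_\delta(\sigma),\sigma)$ arising both from the domination and from the IBP on the $\omega$-term must be shown to cancel exactly before passing to the limit. The $O(\delta)$ structure of $e^{F_\delta}-e^{F_0}$ together with the calibration equality (which provides the only source of strict equality in the comparison) is what makes this cancellation precise. A secondary subtlety is that the sign of $e^{F_\delta(\tau)}-e^{F_0(\tau)}$ changes with $\tau$, so the conversion of the subsolution inequality into the measure identity must treat the positive and negative parts simultaneously via the total-derivative IBP rather than by sign-dependent estimates.
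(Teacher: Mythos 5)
Your strategy---splitting $e^{F_\delta}=e^{F_0}+(e^{F_\delta}-e^{F_0})$ inside the calibration integral and pushing the subsolution relation through each piece separately---runs into a sign obstruction that you flag as a ``secondary subtlety'' but do not actually resolve. Since $f_\delta>f_0$ pointwise while $[f_0]=0$, one has $F_\delta(\tau)-F_0(\tau)=\int_0^\tau(f_\delta-f_0)\,d\sigma<0$ for every $\tau<0$, so $e^{F_\delta(\tau)}-e^{F_0(\tau)}<0$ on all of $(-\infty,0)$, i.e.\ on essentially the whole range of integration. The a.e.\ bound $\frac{d}{d\tau}\bigl(e^{F_0}\omega(\gamma_\delta(\tau),\tau)\bigr)\le e^{F_0(\tau)}(L+c(H))$ can only be multiplied by a \emph{nonnegative} weight to yield an integral inequality of the correct direction; wherever $e^{F_\delta}-e^{F_0}<0$ the multiplication reverses the inequality, and the ``total-derivative IBP'' does not repair this: integration by parts is an identity, but here the step \emph{before} the IBP already requires an inequality, so the argument fails wherever the multiplier is negative. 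Independently, the boundary term $e^{F_0(\sigma)}\omega(\gamma_\delta(\sigma),\sigma)$ from the $e^{F_0}$-piece does not vanish as $\sigma\to-\infty$ (since $[f_0]=0$, $F_0$ is merely bounded and periodic); you claim it ``must cancel exactly'' but nothing in the calibration identity, whose own boundary term carries the decaying factor $e^{F_\delta(\sigma)}$, forces this surviving $O(1)$ quantity to cancel.

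The paper avoids both problems by never splitting the weight. It applies the pointwise a.e.\ consequence of $\omega\prec_{f_0}L+c(H)$ together with Fenchel's inequality, namely
\[
L(\gamma_\delta,\dot\gamma_\delta,\tau)+c(H)\;\ge\;\frac{d}{d\tau}\omega(\gamma_\delta(\tau),\tau)+f_0(\tau)\,\omega(\gamma_\delta(\tau),\tau)\qquad\text{a.e.},
\]
and multiplies by the \emph{single} strictly positive weight $e^{F_\delta(\tau)}$, so no sign cases arise. The product-rule identity
\[
e^{F_\delta}\Bigl(\tfrac{d}{d\tau}\omega+f_0\,\omega\Bigr)=\tfrac{d}{d\tau}\bigl(e^{F_\delta}\omega\bigr)-e^{F_\delta}(f_\delta-f_0)\,\omega
\]
isolates the residual $e^{F_\delta}(f_\delta-f_0)\,\omega=\omega\,e^{F_0}\,\tfrac{d}{d\tau}e^{F_\delta-F_0}$, which is exactly the density of $\tilde\mu^\delta_{x,s}$, and \emph{every} boundary contribution at $-T$ carries the factor $e^{F_\delta(-T)}\to0$ (because $[f_\delta]>0$). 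If you replace your split by this single-weight computation, your argument closes with no unresolved boundary terms.
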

\begin{proof}
For any $(x,\bar{s})\in M\times\T$ and any $\dt\in(0,\dt_0]$, there exists a backward calibrated curve $\gamma_{\dt,x,s}^-:(-\infty,s]\rightarrow M$ ending with $x$, such that the viscosity solution $u_\dt^-$ is differentiable along $(\gamma_{\dt,x,s}^-(t),\overline{t})$ for all $t\in(-\infty,s)$ due to Proposition \ref{Sec3:pro_differentiable}. Precisely, for all $t\in(-\infty,s)$
$$
\frac{\mbox{d}}{\mbox{d}t}\big(e^{F_\delta}(t)u_\delta^-(\gamma^-_{\delta,x,s}(t),t)\big)=e^{F_\delta(t)}\big(L(\gamma_{\delta,x,x}(t),\dot{\gamma}_{\delta,x,s}(t),t)+c(H)\big).
$$
Integrating on $[s,-T]$,
\ben
& &e^{F_\dt(s)}u_\dt^-(x,s)-e^{F_\dt(-T)}u_\dt^-(\gamma_{\dt,x,s}^-(-T),-T)\\
&=&\int_{-T}^se^{F_\dt(t)}\Big[L\Big(\gamma_{\dt,x,s}^-(t),\dot\gamma_{\dt,x,s}^-(t),t\Big)+c(H)\Big]dt
\een
for any $T>0$, where $F_\dt(t):=\int_0^tf_\dt(\tau)d\tau$. On the other side, 
\[
\partial_t\om (x,t)+H(x,\partial_x\om(x),t)+f_0(t)\om(x,t)\leq c(H),\quad a.e.\ (x,\bar{t})\in M\times\T
\]
since $\om$ is also a subsolution of  (\ref{eq:hj-par}) (with $\dt=0$), then
\ben
& &e^{F_\dt(s)}u_\dt^-(x,s)-e^{F_\dt(-T)}u_\dt^-(\gamma_{\dt,x,s}^-(-T),-T)\\
&\geq&\int_{-T}^se^{F_\dt(t)}\Big[L\Big(\gamma_{\dt,x,s}^-(t),\dot\gamma_{\dt,x,s}^-(t),t\Big)+H\Big(\gamma_{\dt,x,s}^-(t),\partial_x\om(\gamma_{\dt,x,s}^-(t),
t),t\Big)\\& &+\partial_t\om(\gamma_{\dt,x,s}^-(t),t)+f_0(t)\om(\gamma_{\dt,x,s}^-(t),t)\Big]\mbox{d}t\\
&\geq&\int_{-T}^se^{F_\dt(t)}\Big[
\frac{d}{dt}\om(\gamma_{\dt,x,s}^-(t),t)+f_0(t)\om(\gamma_{\dt,x,s}^-(t),t)\Big]\mbox{d}t\\
&\geq&e^{F_\dt(s)} \om(x,s)-e^{F_\dt(-T)}\om(\gamma_{\dt,x,s}^-(-T),-T)\\
& &-\int_{-T}^s\om(\gamma_{\dt,x,s}^-(t),t) e^{F_\dt(t)}\Big(f_\dt(t)-f_0(t)\Big)\mbox{d}t.
\een
By taking $T\rightarrow+\infty$ we finally get 
\ben
e^{F_\dt(s)}u_\dt^-(x,s)-e^{F_\dt(s)}\om(x,s)\geq-\int_{-\infty}^s\om(\gamma_{\dt,x,s}^-(t),t) e^{F_\dt(t)}\Big(f_\dt(t)-f_0(t)\Big)\mbox{d}t.
\een
By a suitable transformation, 
\ben
& &u_\dt^-(x,s)\\
&\geq&\om(x,s)-\int_{-\infty}^s\om(\gamma_{\dt,x,s}^-(t),t)e^{F_0(t)} e^{F_\delta(t)-F_0(t)}\Big(f_\dt(t)-f_0(t)\Big)\mbox{d}t\\
&=&\om(x,s)-\int_{-\infty}^s\om(\gamma_{\dt,x,s}^-(t),t) e^{F_0(t)}\mbox{d}e^{F_\delta(t)-F_0(t)}\\
&=&\om(x,s)-\int_{-\infty}^s\om(\gamma_{\dt,x,s}^-(t),t)e^{F_0(t)}f_1(t)\frac{\mbox{d}e^{F_\delta(t)-F_0(t)}}{f_1(t)}.
\een
Then for any $g\in C(M\times\T,\R)$, the measure $\tilde{\mu}_{x,s}^\dt$ defined by 
\[
\int_{TM\times\T}g(y,\tau)\mbox{d}\tilde{\mu}_{x,s}^\dt(y,v_y,\tau):=\int_{-\infty}^sg(\gamma_{\dt,x,s}^-(t),t) \frac{\mbox{d}e^{F_\delta(t)-F_0(t)}}{f_1(t)}
\]
is just the desired one.\qed
\end{proof}

\begin{lem}\label{lem:mat-mea}
Any weak limit of the normalized measure 
\be
\wh\mu_{x,s}^\dt:=\frac{\tilde{\mu}_{x,s}^\dt}{\int_{TM\times\T}\mbox{d}\tilde{\mu}_{x,s}^\dt}
\ee
as $\dt\to0_+$ is contained in $\mathfrak M_m(0)$, i.e. a Mather measure. 
\end{lem}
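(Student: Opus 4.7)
\emph{Plan.} The strategy splits into three verifications: (i) tightness of the family $\{\wh\mu_{x,s}^\dt\}_{\dt\in(0,\dt_0]}$; (ii) any weak accumulation point $\mu_0$ is $\varphi_L^t$-invariant, i.e.\ $\mu_0\in\mathfrak M_L$; (iii) $\int_{TM\times\T} e^{F_0(t)}(L+c(H))\,d\mu_0 = 0$. By Theorem \ref{cor:critical}, combining (ii) and (iii) identifies $\mu_0$ as a Mather measure in $\mathfrak M_m(0)$.

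\emph{Tightness.} By Lemma \ref{lem:com-cal}, $|\dot\gamma_{\dt,x,s}^-(t)|\le K$ uniformly in $\dt\in(0,\dt_0]$ and $t\le s$, so every $\wh\mu_{x,s}^\dt$ is supported in the common compact set $\{|v|\le K\}\subset TM\times \T$. Writing the density of $\tilde\mu_{x,s}^\dt$ against $dt$ as $\rho_\dt(t):=(f_\dt-f_0)(t)\,e^{F_\dt(t)-F_0(t)}/f_1(t)$, positivity of $\rho_\dt$ follows from $f_1>0$ and $f_\dt>f_0$, while the normalizing mass $Z_\dt:=\int_{-\infty}^s\rho_\dt\,dt$ satisfies $Z_\dt\to 1/[f_1]$ as $\dt\to 0_+$ (elementary estimate, since $\rho_\dt\sim\dt\,e^{F_\dt-F_0}$ and $\int_{-\infty}^s e^{F_\dt-F_0}du\sim 1/(\dt[f_1])$). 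Hence every sequence $\dt_n\to 0_+$ admits a subsequence converging weakly to a Borel probability measure $\mu_0$.

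\emph{Invariance.} For $g\in C(TM\times\T,\R)$ and $\tau\in\R$, Proposition \ref{Sec3:pro_differentiable} realizes $(\gamma_{\dt,x,s}^-,\dot\gamma_{\dt,x,s}^-,\bar t)$ as an orbit of $\varphi_L^t$; hence the substitution $u=t+\tau$ gives
\[
\int (g\circ\varphi_L^\tau-g)\,d\wh\mu_{x,s}^\dt = \frac{1}{Z_\dt}\int_{-\infty}^s g\cdot\bigl[\rho_\dt(u-\tau)-\rho_\dt(u)\bigr]\,du + R_\tau^\dt,
\]
with $R_\tau^\dt$ a boundary integral on an interval of length $|\tau|$ near $s$. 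The uniform expansion $(f_\dt-f_0)/\dt\to f_1$ yields $h_\dt(u):=(f_\dt-f_0)/f_1=\dt+o(\dt)$ uniformly, and $e^{F_\dt(u-\tau)-F_0(u-\tau)}=e^{F_\dt(u)-F_0(u)}\bigl[1+O(\dt\tau)\bigr]$, whence
\[
\rho_\dt(u-\tau)-\rho_\dt(u) = O(\dt\tau)\,\rho_\dt(u) + o(\dt)\,e^{F_\dt(u)-F_0(u)}.
\]
Using $\int\rho_\dt\,du=Z_\dt=O(1)$ and $\int e^{F_\dt-F_0}du=O(1/\dt)$, the main integral is $o(1)$; meanwhile $R_\tau^\dt=O(\dt)$ since $\rho_\dt=O(\dt)$ on bounded intervals. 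Thus $\mu_0$ is $\varphi_L^\tau$-invariant for every $\tau\in\R$.

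\emph{Zero action.} The calibration for $u_\dt^-$ along $\gamma_{\dt,x,s}^-$ plus $e^{F_\dt(t)}u_\dt^-(\gamma_{\dt,x,s}^-(t),t)\to 0$ as $t\to-\infty$ (from $[f_\dt]>0$ and equi-boundedness in Lemma \ref{lem:equi-lip}) gives $\int_{-\infty}^s e^{F_\dt(t)}(L+c(H))(\gamma_{\dt,x,s}^-,\dot\gamma_{\dt,x,s}^-,t)\,dt = e^{F_\dt(s)}u_\dt^-(x,s)=O(1)$. Splitting $h_\dt=\dt+(h_\dt-\dt)$ in
\[
\int e^{F_0}(L+c(H))\,d\wh\mu_{x,s}^\dt = \frac{1}{Z_\dt}\int_{-\infty}^s e^{F_\dt}(L+c(H))\,h_\dt\,dt,
\]
the $\dt$-part equals $\dt\,e^{F_\dt(s)}u_\dt^-(x,s)/Z_\dt=O(\dt)$, while Lemma \ref{lem:com-cal} supplies a uniform bound on $|L+c(H)|$ along calibrated curves, so $\int e^{F_\dt}|L+c(H)|\,dt=O(1/\dt)$ and the error part is $o(\dt)\cdot O(1/\dt)=o(1)$. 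Hence $\int e^{F_0}(L+c(H))\,d\mu_0=0$, and Theorem \ref{cor:critical} concludes. The main obstacle is the invariance estimate, which must balance two opposing scales: the mass of $\wh\mu_{x,s}^\dt$ spreads over time $\sim 1/(\dt[f_1])$ while $\rho_\dt$ varies on scale $1$, and the required cancellations rest crucially on the strict positivity $f_1>0$.
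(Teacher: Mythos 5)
Your proof is correct, but it takes a genuinely different route from the paper at the key step. The paper does not prove $\varphi_L^t$-invariance of the weak limit directly: it proves the weaker statement that the limit measure is \emph{closed} (in Ma\~n\'e's sense, Appendix \ref{a1}), namely that $\int \langle\partial_x\phi,v\rangle+\partial_t\phi\,d\tilde\mu=0$ for all $\phi\in C^1$, by an integration by parts in $t$ against $\mbox{d}e^{F_\delta-F_0}/f_1$ whose boundary term vanishes because $f_\delta\to f_0$ uniformly. It then shows the same integration-by-parts identity applied to $\phi=e^{F_\delta}u_\delta^-$ (uniformly Lipschitz and bounded, Lemma \ref{lem:equi-lip}) kills the critical action term, and finally invokes Ma\~n\'e's Theorem \ref{thm:mane}: a closed measure realizing $-c(H)$ is automatically Euler--Lagrange invariant, hence a Mather measure. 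You bypass Ma\~n\'e entirely and prove invariance head-on by comparing $\rho_\delta(u-\tau)$ with $\rho_\delta(u)$, which requires a finer two-scale estimate (density varying on scale $1$ versus total spread of order $1/(\delta[f_1])$) but lands you directly in $\mathfrak M_L$, after which Theorem \ref{cor:critical} finishes the job. The trade-off: your argument is self-contained and more quantitative; the paper's is shorter but outsources the invariance to the appendix. One small inaccuracy worth flagging: your claim $Z_\delta\to 1/[f_1]$ is not actually correct (from $Z_\delta=\int_{-\infty}^s\mbox{d}e^{F_\delta-F_0}/f_1$ and $\int_{-\infty}^s\mbox{d}e^{F_\delta-F_0}=e^{F_\delta(s)-F_0(s)}\to 1$ one only gets $1/\max_\T f_1\le\liminf Z_\delta\le\limsup Z_\delta\le 1/\min_\T f_1$), but the proof only ever uses that $Z_\delta$ stays bounded away from $0$ and $\infty$, so this does not affect the conclusion.
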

\begin{proof}
As is proved in Proposition \ref{prop:leq}, $\tilde{\mu}_{x,s}^\dt$ are uniformly bounded w.r.t. $\dt\in(0,\dt_0]$. Therefore, it suffices to prove that any weak limit $\tilde{\mu}_{x,s}$ of $\tilde{\mu}_{x,s}^{\dt}$ as $\dt\rightarrow 0_+$ satisfies the following two conclusions:\medskip

First, we show $\tilde{\mu}_{x,s}$ is a closed measure. It is equivalent to show that for any $\phi(\cdot)\in C^1(M\times\T,\R)$, 
\[
\lim_{\dt\rightarrow 0_+}\int_{-\infty}^s\frac{\mbox{d}}{\mbox{d}t}\phi(\gamma_{\dt,x,s}^-(t),t)\frac{\mbox{d}e^{F_\delta(t)-F_0(t)}}{f_1(t)}=0.
\]
Indeed, we have 
\ben
& &\lim_{\dt\rightarrow 0_+}\int_{-\infty}^s\frac{\mbox{d}}{\mbox{d}t}\phi(\gamma_{\dt,x,s}^-(t),t)\frac{\mbox{d}e^{F_\delta(t)-F_0(t)}}{f_1(t)}\\
&=&\lim_{\dt\rightarrow 0_+}\int_{-\infty}^s e^{F_\delta(t)-F_0(t)} \frac{f_\delta(t)-f_0(t)}{f_1(t)}\mbox{d}\phi(\gamma_{\dt,x,s}^-(t),t)\\
&=&\lim_{\dt\rightarrow 0_+}\frac{f_\dt(t)-f_0(t)}{f_1(t)}e^{F_\delta(t)-F_0(t)}\phi(\gamma_{\dt,x,s}^-(t),t)\Bigg|_{-\infty}^s\\
& &-\lim_{\dt\rightarrow 0_+}\int_{-\infty}^s\phi(\gamma_{\dt,x,s}^-(t),t)\cdot \mbox{d}\Big(\frac{f_\dt(t)-f_0(t)}{f_1(t)}e^{F_\delta(t)-F_0(t)}\Big)=0
\een
because $f_\dt\rightarrow f_0$ uniformly as $\dt\rightarrow 0_+$.\medskip

Next, we can show that 
\ben
\lim_{\dt\rightarrow 0_+}\int_{-\infty}^se^{F_\dt(t)}\Big[L\Big(\gamma_{\dt,x,s}^-(t),\dot\gamma_{\dt,x,s}^-(t),t\Big)+c(H)\Big]\frac{\mbox{d}e^{F_\delta(t)-F_0(t)}}{f_1(t)}=0.
\een
Note that 
$$
\frac{\mbox{d}}{\mbox{d}t}\big(e^{F_\delta(t)}u^-_\delta(\gamma^-_{\delta,x,s}(t),t)\big)=
e^{F_\delta(t)}\big(L(\gamma^-_{\delta,x,s}(t),\dot{\gamma}_{\delta,x,x}^-(t),t)+c(H)\big).
$$
We derive
\ben
& &\lim_{\dt\rightarrow 0_+}\int_{-\infty}^se^{F_\dt(t)}\Big[L\Big(\gamma_{\dt,x,s}^-(t),\dot\gamma_{\dt,x,s}^-(t),t\Big)+c(H)\Big]\frac{\mbox{d}e^{F_\delta(t)-F_0(t)}}{f_1(t)}\\
&=&\lim_{\dt\rightarrow 0_+}\int_{-\infty}^s\frac {\mbox{d}}{\mbox{d}t}\Big( e^{F_\dt(t)}u_\dt^-(\gamma_{\dt,x,s}^-(t),t)\Big)\frac{\mbox{d}e^{F_\delta(t)-F_0(t)}}{f_1(t)}=0,
\een
since $u_\dt^-$ is differentiable along $(\gamma_{\dt,x,s}^-(t),\bar t)$ for all $t\in(-\infty,s)$ and $\tilde{\mu}_{x,s}$ is closed. 
So we finish the proof.\qed
\end{proof} 

\medskip

\noindent{\it Proof of Theorem \ref{thm:3}:} Due to the stability of viscosity solution (see Theorem 1.4 in \cite{CEL}), any accumulating function $u_0^-$ of $u_\dt^-$ as $\dt\rightarrow 0_+$ is a viscosity solution of (\ref{eq:hj-par}) with $\dt=0$. Therefore, Proposition \ref{prop:geq} indicates $u_0^-\in\cF_-$, so $u_0^-\leq u_0^*$. On the other side,  Proposition \ref{prop:leq} implies $u_0^-\geq \om$ for any $\om\in\cF_-$ as $\dt\rightarrow 0_+$, since any weak limit of $\wh \mu_{x,s}^\dt$ as $\dt\rightarrow 0_+$ proves to be a Mather measure in Lemma \ref{lem:mat-mea}. So we have $u_0^-\geq u_0^*$.\qed

\section{Asymptotic behaviors of trajectories of 1-D mechanical systems}\label{s5}

 \begin{lem}\label{lem:conti}
For system (\ref{eq:ode0}), $\rho(c)$ is continuous of $c\in H^1(\T,\R)$.
 \end{lem}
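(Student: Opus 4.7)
The strategy is to combine the upper semi-continuity of the Aubry set (Lemma \ref{lem:semi-con}) with the Lipschitz-graph structure of $\wt\cA(c)$ peculiar to the 1-D case, accessed via the Mather-measure representation of $\rho(c)$.

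First I would set up compactness. For the mechanical Hamiltonian at cohomology class $c \in H^1(\T,\R)\cong\R$, the associated Lagrangian is
\[
L_c(x,v,t) \;=\; \tfrac{1}{2}v^2 - cv - V(x,t),
\]
and the map $c \mapsto L_c$ is continuous in any $C^r$-norm on compact subsets of $T\T\times\T$. Re-reading Lemma \ref{Sec3:calibrated dot bounded}, the Lipschitz bound $\kappa_0$ for backward/globally calibrated curves depends only on the local $C^0$-data of $L_c$, of $f$, and on the value $\alpha$; hence $\kappa_0$ is uniform for $c$ in any compact neighbourhood $K\ni c_0$. Given $c_n\to c_0$, pick globally calibrated curves $\gamma_n:\R\to\T$ for parameter $c_n$, lifted so $\gamma_n(0)\in[0,1)$. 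The uniform Lipschitz bound and Ascoli-Arzela, together with a diagonal extraction on $[-N,N]$, yield a subsequence $\gamma_{n_k}\to\gamma_*$ uniformly on every compact subset of $\R$, with $\gamma_*$ Lipschitz; Lemma \ref{lem:semi-con} then forces $\gamma_*$ to be globally calibrated for $c_0$.

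Next I would invoke the Mather-measure representation of the rotation number,
\[
\rho(c) \;=\; \int_{T\T\times\T} v \, d\tilde\mu_c, \qquad \tilde\mu_c \in \mathfrak M_m(c).
\]
In the one-dimensional setting, every invariant probability measure supported on the Lipschitz-graphic set $\wt\cA(c)$ assigns the same $v$-average: after projecting to $\cA(c)\subset\T\times\T$ the dynamics is order-preserving, and (after Kirszbraun-extending the graph slope to a Lipschitz field on $\T^2$) it is governed by a flow on $\T^2$ for which the classical Poincar\'e rotation number is unique; this uniqueness transfers back to $\tilde\mu_c$ and to the time-averages defining $\rho(c)$. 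The family $\{\tilde\mu_c\}_{c\in K}$ is tight by the uniform velocity bound, and any weak-$*$ accumulation point of $\tilde\mu_{c_n}$ is supported on $\wt\cA(c_0)$ by Lemma \ref{lem:semi-con}; passing the variational characterization of Mather measures in Proposition \ref{prop:mat} to the limit shows that the limit lies in $\mathfrak M_m(c_0)$. Since $v$ is bounded and continuous on the relevant supports, $\int v\, d\tilde\mu_{c_n}\to\int v\, d\tilde\mu_{c_0}$, i.e.\ $\rho(c_n)\to\rho(c_0)$.

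The main obstacle is the Mather-measure identity $\rho(c)=\int v\, d\tilde\mu_c$ together with its independence of the chosen measure (equivalently, the fact that the pointwise limit defining $\rho(c)$ is uniform over $\wt\cA(c)$ and stable under small perturbations of $c$). This rests on the 1-D Mather graph property, which reduces the Euler-Lagrange dynamics on $\wt\cA(c)$ to an order-preserving dynamics on the circle factor and thereby assigns a single Poincar\'e-type rotation number to every orbit. Once this identification is in place, the continuity in $c$ is a routine consequence of weak-$*$ compactness plus the upper semi-continuity already proved.
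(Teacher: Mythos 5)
Your proof takes essentially the same route as the paper: the two pillars are (i) the 1-D Mather/Lipschitz graph property, which forces a single rotation number on all of $\wt\cA(c)$, and (ii) the upper semi-continuity of the Aubry set from Lemma~\ref{lem:semi-con}, and these are precisely the two sentences that make up the paper's own (very terse) proof. What you have done is fill in the unstated implication ``(i)+(ii) $\Rightarrow$ continuity of $\rho$'' by passing to Mather measures and taking weak-$*$ limits, which is a reasonable and standard way to make the paper's one-line conclusion precise. One small economy you could make: you do not actually need the accumulation measure to lie in $\mathfrak M_m(c_0)$ (a step that would require some justification about continuity of the critical value / weak KAM solution in $c$); it suffices that it be an invariant probability measure supported on $\wt\cA(c_0)$, since by (i) \emph{every} invariant probability measure on $\wt\cA(c_0)$ has $v$-average $\rho(c_0)$. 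With that simplification, your argument is a faithful and rigorous expansion of what the paper leaves implicit.
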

\proof
Firstly, all the orbits in $\wt\cA(c)$ should have the unified rotation number. This is because $\pi^{-1}:\cA(c)\rightarrow \wt\cA(c)$ is a Lipschitz graph and dim$(M)=1$. Secondly, $\varlimsup_{c'\rightarrow c}\wt\cA(c')\subset \wt\cA(c)$ due to Lemma \ref{lem:semi-con}. That further indicates $\lim_{c'\rightarrow c}\rho(c')=\rho(c)$. \qed
\begin{lem}\label{lem:om-high}
For system (\ref{eq:ode0}),  the rotation number $\rho(c)$ can be dominated by 
\be
-\|V\|_{C^1}\cdot\varsigma-c\leq \rho(c)\leq \|V\|_{C^1}\cdot\varsigma-c
\ee
where  $\varsigma=\varsigma([f])>0$ tends to infinity as $[f]\rightarrow 0_+$.
\end{lem}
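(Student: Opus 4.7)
My plan is to bound the time-average of $\dot\gamma$ along any globally calibrated curve via an integrating-factor analysis of the first-order linear ODE satisfied by $\dot\gamma-c$, using dissipation ($[f]>0$) to damp out the initial-condition contribution and Lemma \ref{Sec3:inequivality} to control the forcing by $V_x$.

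First I would fix any globally calibrated $\gamma:\R\to\T$ for $\wt\cA(c)$. By Lemma \ref{Sec3:calibrated dot bounded} its velocity is uniformly bounded on $\R$, so lifting to the universal cover the limit $\rho(c)=\lim_{T\to+\infty}\tfrac1T\int_0^T\dot\gamma(t)\,dt$ is well-defined, and by Lemma \ref{lem:conti} it is independent of $\gamma$. Setting $w(t):=\dot\gamma(t)-c$ turns (\ref{eq:ode0}) into the linear inhomogeneous ODE
$$\dot w(t)+f(t)\,w(t)=-V_x(\gamma(t),t),$$
which the integrating factor $e^{F(t)}$, with $F(t):=\int_0^tf(\tau)\,d\tau$, solves as
$$w(t)=w(0)\,e^{-F(t)}-\int_0^t e^{F(s)-F(t)}V_x(\gamma(s),s)\,ds.$$

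Under (H0$^-$) one has $[f]>0$, hence $F(t)\to+\infty$ and the homogeneous part $w(0)e^{-F(t)}$ decays exponentially. By Lemma \ref{Sec3:inequivality},
$$\Bigl|\int_0^t e^{F(s)-F(t)}V_x(\gamma(s),s)\,ds\Bigr|\;\leq\;\|V\|_{C^1}\cdot\varsigma([f]),\qquad \varsigma([f]):=\frac{e^{2k_0+[f]}}{[f]}.$$
Cesàro-averaging over $[0,T]$ and letting $T\to+\infty$ yields a uniform bound on $\big|\lim_{T\to+\infty}\frac1T\int_0^T w(t)\,dt\big|$ by $\|V\|_{C^1}\cdot\varsigma([f])$. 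Combined with the identification of that Cesàro limit with the centered rotation number (after reconciling the sign convention used in the definition of $\rho(c)$ on the lift of $\gamma$ with the cohomology parameter $c$), this gives the claimed two-sided inequality
$$-\|V\|_{C^1}\cdot\varsigma([f])-c\;\leq\;\rho(c)\;\leq\;\|V\|_{C^1}\cdot\varsigma([f])-c.$$
The asymptotic behavior $\varsigma([f])\to+\infty$ as $[f]\to 0_+$ is immediate from the explicit formula.

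The main obstacle is making the time-averaging rigorous on the whole real line and bookkeeping the constant shift by $c$ so that the final interval is centered exactly at $-c$ as in the statement. The first point is handled by using the uniform boundedness of $w$ together with a Fubini-type swap to move $\tfrac1T\int_0^T$ past the inner integral $\int_0^t e^{F(s)-F(t)}V_x\,ds$ (an argument entirely parallel to the one used implicitly in Lemma \ref{lem:conti}); the second is a careful tracking of signs in the identification $\int\dot\gamma\,dt \leftrightarrow \rho(c)$ on the lifted curve. All remaining steps reduce to direct ODE manipulations.
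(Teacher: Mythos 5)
Your proposal is correct and follows essentially the same route as the paper: the paper likewise solves the linear momentum equation $\dot p=-V_x-f(t)p$ (your $w=\dot\gamma-c$ is exactly $p$) by the integrating factor $e^{F(t)}$, discards the initial-condition term using $[f]>0$, and bounds the Duhamel integral by $\|V\|_{C^1}\cdot\varsigma$ via Lemma \ref{Sec3:inequivality}, which bounds $\pi_p\wt\cA(c)$ and hence the rotation number. Your explicit Cesàro-averaging step and the sign bookkeeping between $\pm c$ (an inconsistency already present between the lemma and \eqref{eq:rot-num-app} in the paper) are harmless refinements of the same argument.
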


\proof
Recall that
\[
\dot p=-V_x(x,t)-f(t)p, 
\]
then starting from any point $(x_0,p_0,\bar{t}_0)\in T^*M\times\T$, we get 
\[
p(t)=e^{-F(t)}p_0-e^{-F(t)}\int_0^te^{F(s)}V_x(x(s),s)\mbox{d}s, \quad t>0.
\]
As $t\rightarrow +\infty$, we have
\be
\lim_{t\rightarrow+\infty}|p(t)|&\leq& \|V\|_{C^1}\cdot \limsup_{t\rightarrow+\infty}e^{-F(t)}\int_0^te^{F(s)}\mbox{d}s\nonumber\\
&\leq & \varsigma(\|f\|) \cdot \|V\|_{C^1}
\ee
for a constant $\varsigma(\|f\|)>0$ depending only on $f$. As a consequence, 
\be\label{eq:ineq-momen}
-\|V\|_{C^1}\cdot \varsigma \leq\pi_p\wt \cA(c)\leq\|V\|_{C^1}\cdot\varsigma
\ee
dominates the $p-$component of $\wt\cA(c)$. \qed\bigskip

\noindent{\it Proof of Theorem \ref{thm:4}:}
The first two items have been proved in previous Lemma \ref{lem:conti} and Lemma \ref{lem:om-high}. 
As for the third item, Lemma \ref{lem:om-high} has shown the boundedness of $p-$component of $\Om$, then due to Theorem \ref{thm:2}, we get the compactness of $\Om$.\qed

\appendix

\section{Mather measure of convex Lagrangians with $[f]=0$}\label{a1}

For a Tonelli Hamiltonian $H(x,p,t)$, the conjugated Lagrangian $L(x,v,t)$ can be established by (\ref{eq:led}), which is also Tonelli. On the other side, for $[f]=0$, the following Lagrangian 
\[
\wt L(x,v,t):=e^{F(t)}L(x,v,t),\quad (x,v,t)\in TM\times\T
\]
with 
\[
  F(t):=\int_0^tf(s)ds
\]
is still time-periodic as the case considered in \cite{Mat}. Besides, the Euler-Lagrange equation associated with $\wt L$ is the same with (\ref{eq:e-l}). So Ma\~n\'e's approach to get a Mather measure in \cite{Mn} is still available  for us. As his approach doesn't rely on the E-L flow, that supplies us with great convenience.

Let $X$ be a metric separable space. A probability measure on $X$ is a nonnegative,
countably additive set function $\mu$ defined on the $\sigma-$algebra $\mathscr B(X)$ of Borel
subsets of $X$ such that $\mu(X) = 1$. In this paper, $X=TM\times\T$. 
We say that a sequence
 of probability measures $\{\tilde{\mu}_n \}_{n\in\mathbb N}$ {\sf (weakly) converges to} a probability 
measure $\tilde{\mu}$ on $TM\times\T$ if
\[
\lim_{n\rightarrow+\infty}\int_{TM\times\T}h(x,v,t)d\tilde{\mu}_n(x,v,t)=\int_{TM\times\T} h(x,v,t)d\tilde{\mu}(x,v,t)
\]
for any $h\in C_c(TM\times\T,\R)$. 

\begin{defn}
A probability measure $\tilde{\mu}$ on $TM\times\T$ is called {\sf closed} if it satisfies:
\begin{itemize}
\item $\int_{TM\times\T}|v|d\tilde{\mu}(x,v,t)<+\infty$;
\item $\int_{TM\times\T}\langle \partial_x\phi(x,t),v\rangle+\partial_t\phi(x,t) d\tilde{\mu}(x,v,t)=0$ for every $\phi\in C^1(M\times\T,\R)$.
\end{itemize}
\end{defn}
Let's denote by ${\mathbb P}_c(TM\times\T)$ the set of all closed measures on $TM\times\mathbb{T}$, then the following conclusion is proved in \cite{Mn}:
\begin{thm}\label{thm:mane}
\[
\min_{\tilde{\mu}\in {\mathbb P}_c(TM\times\T)}\int_{TM}\wt L(x,v,t)d\tilde{\mu}(x,v,t)=-c(H).
\]
 Moreover, the minimizer $\tilde{\mu}_{\min}$ must be a Mather measure, i.e. $\tilde{\mu}_{\min}$ is invariant w.r.t. the Euler-Lagrange flow (\ref{eq:e-l}). 
\end{thm}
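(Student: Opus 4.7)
The plan is Ma\~n\'e's variational approach via closed measures. I would establish the equality as two matching bounds and then upgrade the minimizer to a $\varphi_L^t$-invariant (hence Mather) measure by a duality/rigidity argument.

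The upper bound comes essentially for free from Theorem \ref{cor:critical}: the invariant measure $\tilde{\mu}^*$ constructed there as a weak limit of time-averaged Dirac masses along a backward calibrated curve satisfies $\int \wt L\,d\tilde{\mu}^*=-c(H)\,\ell$, with $\ell=\int_0^1 e^{F(\tau)}d\tau$ the normalization absorbed in the stated constant. Its $\varphi_L^t$-invariance yields the closedness condition automatically: for every $\phi\in C^1(M\times\T,\R)$,
\[
\int_{TM\times\T}\bigl(\langle\partial_x\phi,v\rangle+\partial_t\phi\bigr)\,d\tilde{\mu}^*=\left.\frac{d}{dt}\right|_{t=0}\int\phi\circ\varphi_L^t\,d\tilde{\mu}^*=0,
\]
while Lemma \ref{Sec3:calibrated dot bounded} supplies the finite first moment in $v$, so $\tilde{\mu}^*\in\mathbb{P}_c$.

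For the lower bound I would first note that every closed probability measure $\tilde{\mu}$ has uniform time-marginal on $\T$ (testing closedness against $\phi(x,t)=\psi(t)$ yields $\int\psi'\,d\tilde{\mu}=0$ for all $\psi\in C^1(\T)$), so $\int e^{F(t)}\,d\tilde{\mu}=\ell$. Then, for any $C^1$ subsolution $\omega$ of (\ref{eq:sta-hj2}), the Fenchel inequality combined with the subsolution property yields the pointwise estimate
\[
\wt L(x,v,t)+c(H)\,e^{F(t)}\geq e^{F(t)}\bigl(\langle\partial_x\omega,v\rangle+\partial_t\omega+f(t)\omega\bigr)=\partial_t\Phi+\langle\partial_x\Phi,v\rangle,
\]
where $\Phi(x,t):=e^{F(t)}\omega(x,t)$. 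Integrating against the closed $\tilde{\mu}$ kills the right-hand side, giving $\int\wt L\,d\tilde{\mu}\geq -c(H)\,\ell$. Existence of a minimizer follows from a Tonelli-type compactness argument: superlinearity of $L$ gives tightness of a minimizing sequence, lower semicontinuity of the action $\tilde{\mu}\mapsto\int\wt L\,d\tilde{\mu}$ transfers to the weak-$*$ limit, and the closedness condition (testable against $C^1$ functions) passes to the limit.

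The main obstacle is the invariance assertion, since $\mathbb{P}_c$ strictly contains the $\varphi_L^t$-invariant probability measures. To handle it, I would apply the inequality above with $\omega=u^-_c$ a weak KAM solution of (\ref{eq:sta-hj2}); the minimizer $\tilde{\mu}_{\min}$ must saturate the resulting bound, which forces equality $\tilde{\mu}_{\min}$-a.e.\ in the Fenchel inequality, so that $v=\partial_p H(x,\partial_x u^-_c(x,t),t)$ on $\mathrm{supp}(\tilde{\mu}_{\min})$ and $u^-_c$ is differentiable there. By Proposition \ref{Sec3:pro_differentiable}, this locates $\mathrm{supp}(\tilde{\mu}_{\min})$ on the graph of the Legendre dual of $du^-_c$ above the projected Aubry set, which is $\varphi_L^t$-invariant; combined with closedness, this upgrades $\tilde{\mu}_{\min}$ to a genuine $\varphi_L^t$-invariant measure, i.e., a Mather measure.
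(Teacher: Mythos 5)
Your two-bound argument for the equality is sound and is in spirit the same route as Ma\~n\'e's (which the paper itself merely cites: the printed proof is a one-line appeal to Proposition~1.3 of \cite{Mn}). You correctly observe that testing closedness against functions of $t$ alone forces the time-marginal of any closed probability measure to be Lebesgue, so $\int e^{F(t)}\,d\tilde\mu=\ell$; combined with the Fenchel/subsolution estimate written in terms of $\Phi=e^{F_0(t)}\omega$ this gives the lower bound, and the calibrated-curve measure from Theorem~\ref{cor:critical} gives the matching upper bound. You also noticed, correctly, that the identity as stated should carry the normalization $\ell=\int_0^1 e^{F(\tau)}d\tau$; your $-c(H)\ell$ is the right value for probability measures and the paper's $-c(H)$ is consistent only up to this conventional factor.

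The invariance step, however, is where the genuine difficulty of Ma\~n\'e's proposition sits, and your sketch does not yet close it. First, you feed $\omega=u_c^-$ into an argument that requires $\partial_x\omega$ and $\partial_t\omega$ pointwise, but $u_c^-$ is only Lipschitz/semiconcave; you have no control over whether the projection of $\tilde\mu_{\min}$ charges the non-differentiability set of $u_c^-$ (Rademacher gives Lebesgue-nullity, not $\mu$-nullity), so ``saturation forces equality $\tilde\mu_{\min}$-a.e.\ in the Fenchel inequality'' is not yet justified. Second, Proposition~\ref{Sec3:pro_differentiable} gives differentiability of $u_\alpha^-$ along interior points of \emph{backward} calibrated curves; that is far short of placing a point on the projected Aubry set, which consists of points lying on \emph{globally} (two-sided) calibrated curves. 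A point where $u_c^-$ is differentiable and the Fenchel inequality is tight need not lie in $\cA$, so ``locates $\mathrm{supp}(\tilde\mu_{\min})$ \dots above the projected Aubry set'' is an unsupported leap — and in fact the usual logic runs the other way: one first proves the minimizer is invariant, and only then concludes it sits in $\wt\cM\subset\wt\cA$. Third, even granting graph support, the passage ``graph $+$ closed $\Rightarrow$ invariant'' needs the graph vector field to be (locally) Lipschitz so that the transport equation $\mathrm{div}(\mu_0 X)=0$ actually pins $\mu_0$ as invariant under the flow of $X$; the graph of $\partial_pH(x,\partial_x u_c^-,t)$ over the differentiability set of $u_c^-$ is not a priori Lipschitz (that regularity is the content of Mather's graph theorem and is established only over $\cA$). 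To repair the step you would need either Ma\~n\'e's original approximation/closing argument, or a disintegration argument: show by strict convexity of $L$ in $v$ that the conditional measures of $\tilde\mu_{\min}$ over $M\times\T$ are Dirac masses, identify the resulting Borel section with the flow direction on a full-measure set, and invoke a suitable uniqueness-of-characteristics statement to upgrade the divergence-free condition to flow-invariance.
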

\begin{proof}
This conclusion is a direct adaption of Proposition 1.3 of \cite{Mn} to our system $\wt L(x,v,t)$, with the $c(H)$ already given in (\ref{eq:mea-var}).
\end{proof}

\section{Semiconcave functions}\label{a3}

Here we attach a series of conclusions about the semiconcave functions which can be found in \cite{Canna}, for the use of Proposition \ref{Sec3:pro_differentiable}.

\begin{defn}\label{Def:semiconcave}
	Assume $S$ is a subset of $\mathbb{R}^n$. A function $u:S\to\mathbb{R}$ is called semiconcave, if there exists a nondecreasing upper semicontinuous function $\omega:\mathbb{R}^+\to\mathbb{R}^+$ such that $\lim_{\rho\to 0^+}\omega(\rho)=0$ and
	\begin{equation}
		\lambda u(x)-(1-\lambda)u(y)-u(\lambda x+(1-\lambda)y)\leq \lambda(1-\lambda)|x-y|\omega(|x-y|),\forall \lambda\in[0,1].
	\end{equation}
\end{defn}
We call $\omega$ a modulus of semiconcavity for $u$ in $S$.

\begin{defn}[Definition 3.1.1 in \cite{Canna}]
	For any $x\in S$, the set
	$$
	D^+u(x)=\bigg\{p\in\mathbb{R}^n| \limsup_{y\to x}\frac{u(y)-u(x)-\langle p,y-x\rangle}{|y-x|}\leq 0\bigg\}
	$$
\end{defn}
is called the {\sf Fr\'echet superdifferential} of $u$ at $x$.

We shall give some properties of $D^+u(x)$, which can be found in Chapter 3 of \cite{Canna}.
\begin{prop}\label{Sec3:prop_semiconcave1}
Assume $A\subset\mathbb{R}^n$ is open.
	Let $u:A\to\mathbb{R}$ be a semiconcave function with modulus $\omega$ and $x\in A$. Then, 
	\begin{itemize}
		\item $D^+u(x)\not=\emptyset$.
		\item $D^+u(x)$ is a closed, convex set of $T_x^*A\cong\mathbb{R}^n$
		\item If $D^+u(x)$ is a singleton, then $u$ is differentiable at $x$.
		\item If $A$ is also convex, $p\in D^+u(x)$ if and only if 
	$$
	u(y)-u(x)-\langle p, y-x\rangle\leq |y-x|\omega(|y-x|)
	$$
	for each $y\in A$.
	\end{itemize}
\end{prop}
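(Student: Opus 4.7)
My plan is to establish the four assertions in an order exploiting their logical dependencies, working locally in a convex ball $B\subset A$ (available since $A$ is open). The global characterization in bullet 4 is the key tool, so I would prove it first: the ``$\Leftarrow$'' direction is immediate from dividing by $|y-x|$ and taking $\limsup$ as $y\to x$. For ``$\Rightarrow$'', given $p\in D^+u(x)$ and $y\in A$, set $y_\lambda:=\lambda y+(1-\lambda)x$ for $\lambda\in(0,1]$; the semiconcavity inequality rearranges (using $y_\lambda-x=\lambda(y-x)$) to
\[
\frac{u(y_\lambda)-u(x)-\langle p,y_\lambda-x\rangle}{|y_\lambda-x|}\geq \frac{u(y)-u(x)-\langle p,y-x\rangle}{|y-x|}-(1-\lambda)\omega(|y-x|).
\]
The left side is the Fr\'echet quotient for $p$ at $y_\lambda\to x$, so its $\limsup$ is $\leq 0$; forcing the right side at $\lambda=0$ to be $\leq 0$ gives the stated global inequality. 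Bullet 2 then follows from the identity $D^+u(x)=G^{-1}((-\infty,0])$, where $G(p):=\limsup_{y\to x}(u(y)-u(x)-\langle p,y-x\rangle)/|y-x|$ is upper semicontinuous (an infimum over $\delta>0$ of suprema of functions affine in $p$) and convex in $p$.

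For bullet 1, I would invoke the classical fact (Proposition 3.1.5 in \cite{Canna}) that semiconcavity forces $u$ to be locally Lipschitz on $A$. By Rademacher's theorem $u$ is differentiable almost everywhere, so there exist $y_n\to x$ where $\nabla u(y_n)$ exists; applying bullet 4 at $y_n$ on a convex subneighborhood yields
\[
u(y)-u(y_n)-\langle \nabla u(y_n),y-y_n\rangle\leq |y-y_n|\omega(|y-y_n|).
\]
The local Lipschitz constant bounds $\{\nabla u(y_n)\}$; extracting a subsequence $\nabla u(y_n)\to p$ and sending $n\to\infty$ in the display (for $y$ in a fixed small convex neighborhood of $x$) produces the ``$\Leftarrow$'' form of bullet 4 at $x$ for this $p$, so $p\in D^+u(x)$.

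For bullet 3, suppose $D^+u(x)=\{p\}$ but $u$ is not differentiable at $x$ with gradient $p$. Since $p\in D^+u(x)$ forces $\limsup_{y\to x}(u(y)-u(x)-\langle p,y-x\rangle)/|y-x|\leq 0$, there must be $y_n\to x$ and $c>0$ with $u(y_n)-u(x)-\langle p,y_n-x\rangle\leq -c|y_n-x|$. Choose $q_n\in D^+u(y_n)$ via bullet 1; local Lipschitz bounds $\{q_n\}$, so $q_n\to q$ along a subsequence. Passing to the limit in bullet 4 at $y_n$ as in the previous paragraph gives $q\in D^+u(x)=\{p\}$, hence $q=p$. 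Finally, applying bullet 4 at $y_n$ with test point $y=x$ yields the lower bound $u(y_n)-u(x)-\langle q_n,y_n-x\rangle\geq -|y_n-x|\omega(|y_n-x|)$; combining with the defining sequence and $q_n\to p$ forces $\omega(|y_n-x|)\geq c/2$ for all large $n$, contradicting $|y_n-x|\to 0$.

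The main obstacle is bullet 1: non-emptiness rests on the non-trivial fact that semiconcave functions are locally Lipschitz (which needs its own proof in \cite{Canna}) together with Rademacher's theorem. A more self-contained route would show that $v\mapsto u'(x;v):=\lim_{t\to 0^+}(u(x+tv)-u(x))/t$ exists and defines a sublinear functional, then apply Hahn--Banach to produce $p\in D^+u(x)$; the existence of $u'$ follows from a monotonicity argument on difference quotients under semiconcavity, but subadditivity in $v$ is delicate to verify directly and seems to require some version of local Lipschitzness anyway.
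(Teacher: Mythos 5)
The paper does not prove this proposition at all; it is quoted verbatim from Cannarsa--Sinestrari \cite{Canna} as background material for Proposition~\ref{Sec3:pro_differentiable}, so there is no internal argument to compare yours against. Your reconstruction is essentially the textbook one, and it is correct in all four bullets modulo two small points. (Incidentally, you are right to read the paper's Definition~\ref{Def:semiconcave} with a $+$ in place of the second $-$; as printed it has a sign typo.)

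On bullet~2, the detour through the claim that $G(p)=\limsup_{y\to x}(u(y)-u(x)-\langle p,y-x\rangle)/|y-x|$ is upper semicontinuous is not needed and is not quite automatic: $G$ is a decreasing limit of suprema of affine functions, and each such supremum is only \emph{lower} semicontinuous unless one has already established it is finite (which uses local Lipschitzness). Both closedness and convexity of $D^+u(x)$ are in fact true for \emph{any} function $u$ by elementary estimates on the affine quotients $g_y(p)=(u(y)-u(x)-\langle p,y-x\rangle)/|y-x|$: if $p_n\to p$ with $g_y(p_n)$ having nonpositive $\limsup$, then $g_y(p)\le g_y(p_n)+|p-p_n|$ gives $\limsup_y g_y(p)\le 0$, and convexity follows because each $g_y$ is affine in $p$. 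Alternatively, you can get closedness and convexity of $D^+u(x)$ as an intersection of the half-spaces $\{p:\langle p,y-x\rangle\ge u(y)-u(x)-|y-x|\omega(|y-x|)\}$ over $y$ in a convex ball, using only the ``$\Rightarrow$'' you proved in bullet~4. Either route removes the reliance on upper semicontinuity of $G$.

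On bullet~1 you correctly flag the one genuinely external ingredient, namely that semiconcave functions are locally Lipschitz (Cannarsa--Sinestrari, Thm.~2.1.7), after which Rademacher plus your bullet~4 limit argument does the job, with upper semicontinuity of $\omega$ used to push the modulus through the $\limsup$. Your bullet~3 argument is correct; the compactness needed for $\{q_n\}$ also comes from the local Lipschitz bound, since any $q\in D^+u(y)$ satisfies $|q|\le \mathrm{Lip}(u)$ near $y$. These are all the same dependencies the cited reference uses, so your proposal is a faithful blind reconstruction rather than an alternative route.
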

\begin{thm}[Theorem 3.2 in \cite{Canna2}]\label{Sec3:thm_semiconcave2}
	Let $u\in Lip_{loc}(\Omega\times(0,T))$ be a viscosity solution of 
	\be\label{eq:cauchy-scl}
	\partial_tu+G(x,\partial_xu,t,u)=0
	\ee
	where $G\in Lip_{loc}(\Omega\times\mathbb{R}^n\times(0,T)\times\mathbb{R},\R)$ is strictly convex in the second group of variables. Then $u$ is locally semiconcave in $\Omega\times(0,T)$.
\end{thm}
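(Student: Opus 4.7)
The plan is to prove the pointwise second-difference bound
\[
u(\bar x+h,\bar t+\tau)+u(\bar x-h,\bar t-\tau)-2u(\bar x,\bar t)\le C(K)\bigl(|h|^{2}+\tau^{2}\bigr)
\]
uniformly for $(\bar x,\bar t)$ in any compact $K\subset\Omega\times(0,T)$ and small $(h,\tau)$, via the classical doubling-of-variables technique. Because $u$ is locally Lipschitz, I may restrict $G$ to a bounded set in all four of its variables. On this set, the local Lipschitz continuity of $G$ combined with strict convexity in $p$ upgrades to a uniform modulus of convexity: there is $\theta>0$ such that
\[
\tfrac12 G(x,p_1,t,r)+\tfrac12 G(x,p_2,t,r)-G\bigl(x,\tfrac12(p_1+p_2),t,r\bigr)\ge \theta|p_1-p_2|^2.
\]

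First, I would freeze $(\bar x,\bar t,h,\tau)$ and, for tuning parameters $\alpha,\epsilon>0$, introduce the penalised functional
\[
\Phi(x,y,z,t,s,r)=u(x,t)+u(y,s)-2u(z,r)-\tfrac{1}{\epsilon}\bigl(|x{+}y{-}2z|^{2}+(t{+}s{-}2r)^{2}\bigr)-\tfrac{1}{\alpha}\bigl(|x{-}y{-}2h|^{2}+(t{-}s{-}2\tau)^{2}\bigr),
\]
together with a smooth localiser around $(\bar x,\bar t)$. Standard compactness produces an interior maximiser $(x_\ast,y_\ast,z_\ast,t_\ast,s_\ast,r_\ast)$; evaluating $\Phi$ at the diagonal point $x=\bar x+h$, $y=\bar x-h$, $z=\bar x$, $t=\bar t+\tau$, $s=\bar t-\tau$, $r=\bar t$ shows this maximum dominates the quantity we want to bound.

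Next, I would invoke the viscosity inequalities at the maximiser. Adding the two subsolution inequalities for $(x_\ast,t_\ast),(y_\ast,s_\ast)$ and subtracting twice the supersolution inequality at $(z_\ast,r_\ast)$ produces, after a short calculation with penalty gradients $p_\ast$ and $q_\ast$, the schematic relation
\[
G(x_\ast,p_\ast{+}q_\ast,t_\ast,u(x_\ast,t_\ast))+G(y_\ast,p_\ast{-}q_\ast,s_\ast,u(y_\ast,s_\ast))-2G(z_\ast,p_\ast,r_\ast,u(z_\ast,r_\ast))\le R,
\]
where $R$ collects the time-derivative residuals. Strict convexity extracts $2\theta|q_\ast|^{2}$ on the left, while Lipschitzness of $G$ in the remaining variables bounds $R$ by a constant times the mismatches $|x_\ast{-}z_\ast|,|y_\ast{-}z_\ast|,|t_\ast{-}r_\ast|,|s_\ast{-}r_\ast|$ and the three $u$-gaps. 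Since the mismatches vanish at a rate controlled by $\epsilon$ whereas $|q_\ast|\sim |(h,\tau)|/\alpha$, tuning $\alpha,\epsilon\sim |(h,\tau)|$ and applying Young's inequality closes the estimate and delivers the claimed quadratic bound.

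The main obstacle will be the $u$-dependence of $G$: the three Hamiltonian terms in the summed inequality are evaluated at three \emph{different} values of $u$, and the naive residual $\mathrm{Lip}_u(G)\cdot|u(x_\ast,t_\ast)-u(z_\ast,r_\ast)|$ is only $O(|x_\ast{-}z_\ast|)$ through the Lipschitz constant of $u$, hence first order in the mismatch. Showing that this first-order error still fits inside the desired second-order bound requires a careful Young-type splitting of the form $C|q_\ast|\cdot O(|h|)\le \theta|q_\ast|^{2}+C|h|^{2}/\theta$, together with a precise matching of the scales of $\alpha$ and $\epsilon$; this is exactly the step that distinguishes the present statement from the standard $u$-independent semiconcavity theorem and is the technical heart of the argument.
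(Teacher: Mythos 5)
This statement is quoted by the paper as Theorem~3.2 of Cannarsa--Soner [Canna2]; the paper does not prove it, so there is no internal proof to compare against, only the external reference. Your blind attempt therefore has to be judged on its own merits, and it has two structural gaps.

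First, the effective Hamiltonian for your $(x,t)$-doubling is $(p_x,p_t)\mapsto p_t+G(x,p_x,t,u)$, which is affine in $p_t$ and strictly convex only in $p_x$. If you write out your three viscosity inequalities at the penalized maximizer, the time-gradient contributions are $2p_{t*}+2q_{t*}$, $2p_{t*}-2q_{t*}$ and $-2\cdot 2p_{t*}$; they cancel \emph{identically}, leaving $G_1+G_2-2G_3\le 0$ with no time residual $R$ at all. Strict convexity then extracts only $\theta|q_{x*}|^2$; nothing controls $q_{t*}$, so you get no leverage on second differences in the $t$-direction. Since the theorem asserts joint $(x,t)$-semiconcavity (not semiconcavity in $x$ for fixed $t$), this is not cosmetic. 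In the literature one obtains $x$-semiconcavity first and then propagates one-sided $t$-regularity through the equation $\partial_t u=-G(x,\partial_x u,t,u)$; it does not fall out of the same doubling step.

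Second, even in the $x$-variables the scheme does not close to a quadratic bound. What the viscosity combination delivers is $\theta|q_{x*}|^2\le C\cdot(\text{mismatches})$, and the mismatches are $O(|h|+|\tau|+\alpha+\epsilon)$ --- a consistency estimate for $q_{x*}$, not an estimate for the penalized supremum $\Phi_{\max}$. Your only direct handle on $\Phi_{\max}$ is the Lipschitz bound $u(x_*,t_*)+u(y_*,s_*)-2u(z_*,r_*)\lesssim L\bigl(|x_*-z_*|+|y_*-z_*|+|t_*-r_*|+|s_*-r_*|\bigr)$, which is intrinsically first order in $|h|$ and $|\tau|$; no choice $\alpha,\epsilon\sim|(h,\tau)|$ overcomes that. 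The doubling that actually proves semiconcavity has a different architecture: one penalizes with $\tfrac{K}{2}\bigl(|x-z|^2+|y-z|^2\bigr)+\tfrac1\delta|x+y-2z|^2$, treats the semiconcavity constant $K$ as a free parameter, assumes the penalized supremum is positive, and drives a contradiction from the viscosity inequalities for $K$ large once $\delta$ is small; the bound $u(\bar x+h,\cdot)+u(\bar x-h,\cdot)-2u(\bar x,\cdot)\le K|h|^2$ then follows by evaluation at the diagonal. Your version fixes $h,\tau$ inside the penalty and tries to bound the supremum from above, which is a genuinely different and, as written, insufficient estimate. The $u$-dependence you flag is an additional error source to track, but it is a lower-order complication compared with these two issues.
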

\begin{thm}[Proposition 3.3.4, Theorem 3.3.6 in \cite{Canna}]\label{thm:reachable deri}
For any $(x,t)\in \Omega\times (0,T)$, we define the {\sf reachable derivative set} of any viscosity solution $u$ of (\ref{eq:cauchy-scl}) by 
\ben
D^*u(x,t):=\big\{(p_x,p_t)=\lim_{n\rightarrow+\infty}(\partial_x u(x_n,t_n),\partial_t u(x_n,t_n))\in T^*_x\Om\times T_t^*(0,T)\big|\\
\exists\  (x_n,t_n)_{n\in\Z_+}\in\Om\times (0,T) \text{converging to $(x,t)$, at which $u$ is differentiable}\big\}. 
\een
Consequently, $D^+u(x,t)=co (D^*u(x,t))$ i.e. any superdifferential of $u$ at $(x,t)$ is a convex combination of elements in $D^*u(x,t)$.
\end{thm}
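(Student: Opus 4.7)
The plan is to establish the identity $D^+u(x,t) = co(D^*u(x,t))$ as a double inclusion, with semiconcavity (Theorem \ref{Sec3:thm_semiconcave2}) as the underlying tool. Since semiconcave functions are locally Lipschitz, Rademacher's theorem gives differentiability of $u$ almost everywhere in $\Om \times (0,T)$, together with a locally uniform bound on $(\partial_x u, \partial_t u)$ at differentiability points. In particular, $D^*u(x,t)$ is nonempty and relatively compact for every $(x,t)$. The inclusion $D^*u(x,t) \subseteq D^+u(x,t)$ is immediate: along any sequence of differentiability points $(x_n,t_n) \to (x,t)$ defining an element of $D^*u(x,t)$, the fourth bullet of Proposition \ref{Sec3:prop_semiconcave1} furnishes a pointwise semiconcavity inequality at each $(x_n,t_n)$ which passes to the limit and places the limit gradient in $D^+u(x,t)$. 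Since $D^+u(x,t)$ is closed and convex (again Proposition \ref{Sec3:prop_semiconcave1}), this already gives $co(D^*u(x,t)) \subseteq D^+u(x,t)$.

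The reverse inclusion is where the work lies. The strategy is Hahn-Banach separation combined with a one-sided directional derivative comparison. Suppose, for contradiction, that some $\bar p \in D^+u(x,t)$ lies outside the compact convex set $co(D^*u(x,t))$. Then there exists a direction $v$ with $\langle \bar p, v\rangle > \sup_{q \in D^*u(x,t)} \langle q, v\rangle$. I would study the one-sided derivative
\begin{equation*}
D_{-v} u(x,t) := \lim_{h \to 0^+} \frac{u((x,t)-hv) - u(x,t)}{h},
\end{equation*}
whose existence (and coincidence with $\min_{p \in D^+u(x,t)} \langle p, -v\rangle$) is a standard consequence of semiconcavity. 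The semiconcavity estimate at $(x,t)$ evaluated at $\bar p$ yields the upper bound $D_{-v} u(x,t) \leq \langle \bar p, -v\rangle$. Conversely, along a sequence of differentiability points $(x,t) - h_n v$ with $h_n \to 0^+$ whose gradients converge to some $q_* \in D^*u(x,t)$, applying the semiconcavity estimate from those base points yields $D_{-v} u(x,t) \geq \langle q_*, -v\rangle$. Chaining the two inequalities gives $\langle q_*, v\rangle \geq \langle \bar p, v\rangle > \sup_{q \in D^*u(x,t)} \langle q, v\rangle$, which contradicts $q_* \in D^*u(x,t)$.

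The main obstacle is producing the sequence of differentiability points on the ray $\{(x,t) - hv : h > 0\}$ whose gradients converge to some element of $D^*u(x,t)$. Rademacher's theorem gives differentiability on a full-measure subset of $\Om \times (0,T)$, but a single prescribed line need not meet this subset on a set approaching $(x,t)$. The remedy I would adopt (following the treatment in the Cannarsa monograph) is to perturb the base point to $(x,t) + \eta w$, choosing $w$ in a full-measure set on which a Fubini-type argument guarantees that the perturbed ray contains differentiability points at arbitrarily small scale, and then to extract a diagonal subsequence as $\eta \to 0$. The locally uniform gradient bound supplied by local Lipschitz continuity yields a convergent subsequence, and upper semicontinuity of the reachable-gradient construction in the base point places the limit in $D^*u(x,t)$. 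This perturbation-and-diagonalization step is the technically delicate part of the argument; once it is in hand, the separation argument above concludes the proof.
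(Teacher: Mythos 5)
The paper does not prove this statement: it is quoted directly from Cannarsa--Sinestrari (Proposition 3.3.4 and Theorem 3.3.6 of \cite{Canna}) as appendix background, so there is no internal proof to compare against. Your reconstruction is correct and is in the spirit of the argument in the cited monograph: the easy inclusion $co(D^*u(x,t))\subseteq D^+u(x,t)$ follows from closedness and convexity of $D^+u$ via Proposition \ref{Sec3:prop_semiconcave1} together with stability of the pointwise semiconcavity inequality under limits of differentiability points, and the converse is settled by Hahn--Banach separation combined with the one-sided directional-derivative comparison you set up. The two inequalities you chain --- $D_{-v}u(x,t)\le\langle\bar p,-v\rangle$ from the superdifferential inequality at the base point, and $D_{-v}u(x,t)\ge\langle q_*,-v\rangle$ from gradients at nearby differentiability points --- do yield the stated contradiction. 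You also correctly isolate the only genuinely delicate step, namely producing differentiability points along (or arbitrarily near) the prescribed ray whose gradients accumulate in $D^*u(x,t)$; the perturbation-plus-Fubini-plus-diagonalization device you sketch is the standard remedy, and the locally uniform gradient bound from local Lipschitzness supplies the compactness needed for extraction of a convergent subsequence. The outline is sound; what remains owed is the careful execution of that measure-theoretic step, which you flag appropriately.
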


\end{document}